\numberwithin{equation}{section}%
\title[Riesz equilibrium problem and integral identities for special functions]{On the solution of a Riesz equilibrium problem\\and integral identities for special functions}
\author{Djalil Chafaï}%
\address[DC]{CEREMADE, Université Paris-Dauphine, Université PSL, CNRS, 75016 Paris, France}%
\email{\url{mailto:djalil(at)chafai.net}}%
\urladdr{\url{http://djalil.chafai.net/}}%
\author{Edward B.\ Saff}%
\address[ES]{Center for Constructive Approximation, %
  Vanderbilt University, Nashville, TN 37240, USA}%
\email{\url{mailto:Ed.Saff@Vanderbilt.Edu}}%
\urladdr{\url{https://my.vanderbilt.edu/edsaff/}}%
\author{Robert S.\ Womersley}%
\address[RW]{School of Mathematics and Statistics, %
  University of New South Wales, Sydney {\tiny NSW 2052},
  Australia}%
\email{\url{mailto:R.Womersley@unsw.edu.au}}%
\urladdr{\url{https://web.maths.unsw.edu.au/~rsw/}}%
\date{Summer 2021, revised Autumn 2021, revised Spring 2022, published in J.
  Math. Anal. Appl. 515 (2022) 126367 DOI:
  \href{https://10.1016/j.jmaa.2022.126367}{10.1016/j.jmaa.2022.126367}.
  The present document is an arXiv post-publication update dated Autumn 2022,
  with an appendix giving an analytic proof of the Riesz formula adapted
  from Dyda-Kuznetsov-Kwa\'{s}nicki 2017. Compiled \today}
\keywords{Potential theory; Equilibrium measure; Riesz kernel; Arcsine
    distribution; Euler--Lagrange (Frostman) conditions; Funk--Hecke formula;
    Special functions and integral identities; Hypergeometric function;
    Elliptic integral; Landen transform}
\subjclass[2000]{%
  31A10; % Two-dimensional potential theory, Integral representations, integral
         % operators, integral equations methods in two dimensions
  31B10; % Higher-dimensional potential theory, Integral representations,
         % integral operators, integral equations methods in higher dimensions
  44A20; % Integral transforms of special functions Recent zbMATH articles
  33C20; % Special functions; Classical hypergeometric function pFq
  33C75% Special functions; Elliptic integrals as hypergeometric functions
}
\newtheorem{theorem}{Theorem}[section]%
\newtheorem{lemma}[theorem]{Lemma}%
\newtheorem{remark}[theorem]{Remark}%
\newtheorem{corollary}[theorem]{Corollary}%
\def\@MRExtract#1 #2!{#1}     % thanks, Martin!
\renewcommand{\MR}[1]{% we need to strip the "(...)"
  \xdef\@MRSTRIP{\@MRExtract#1 !}%
  \href{http://www.ams.org/mathscinet-getitem?mr=\@MRSTRIP}{MR-\@MRSTRIP}}
\begin{document}
\begin{abstract}
  The aim of this note is to provide a full space quadratic external field
  extension of a classical result of Marcel Riesz for the equilibrium measure
  on a ball with respect to Riesz $s$-kernels. We address the case $s = d - 3$
  for arbitrary dimension $d$, in particular the logarithmic kernel in
  dimension $3$. The equilibrium measure for this full space external field
  problem turns out to be a radial arcsine distribution supported on a ball
  with a special radius. As a corollary, we obtain new integral identities
  involving special functions such as elliptic integrals and more generally
  hypergeometric functions. It seems that these identities are not found in
  the existing tables for series and integrals, and are not recognized by
  advanced mathematical software. Among other ingredients, our proofs involve
  the Euler\,--\,Lagrange variational characterization, the Funk\,--\,Hecke
  formula, the Weyl regularity lemma, the maximum principle, and special
  properties of hypergeometric functions.
\end{abstract}
\maketitle

{\footnotesize\tableofcontents}

\section{Introduction and main results}

The goal of this note is to provide a full space quadratic external field
extension (Theorem \ref{th:d=s+3} below) of a classical result of Marcel Riesz
(Theorem \ref{th:riesz} below) for the equilibrium measure on a ball in
arbitrary dimensions with respect to Riesz $s$-kernels, including the
logarithmic kernel. The equilibrium measure turns out to be a radial arcsine
distribution. As corollaries, we obtain new integral identities involving
special functions such as elliptic integrals and more generally hypergeometric
functions; see, for example, Corollaries \ref{co:riesz}, \ref{co:formulas},
and \ref{co:formulas:more} below. It seems that these identities are not found
in the existing tables for series and integrals, and are not recognized by
advanced mathematical software.

Before we present our results and identities, we recall some basic notions
from potential theory. Throughout this note, we denote by $d$ the Euclidean
dimension, which is always a positive integer, and by $s\in(-2,+\infty)$ the
Riesz parameter. For $x\in\mathbb{R}^d$, $x\neq 0$, the Riesz $s$-kernel is
defined by
\begin{equation}\label{eq:VW}
  K_s(x):=
  \begin{cases}
    \mathrm{sign}(s)\left|x\right|^{-s} & \text{if $-2<s<0$ or $s>0$}\\
    -\log\left|x\right| & \text{if $s=0$}
  \end{cases},
\end{equation}
where $\left|x\right|:=\sqrt{x_1^2+\cdots+x_d^2}$ is the Euclidean norm. It is
the Coulomb or Newton kernel if $s=d-2$. Let $\mathcal{M}_1$ be the set of
probability measures on $\mathbb{R}^d$ and let
$V:\mathbb{R}^d\mapsto(-\infty,+\infty]$ be a lower semicontinuous function,
which will play the role of an external field. In this note we only deal with
either an external field constant on a centered ball and infinite outside the
ball, or with a quadratic external field of the form
$V(\cdot)=\gamma\left|\cdot\right|^2$, $\gamma>0$. The energy of
$\mu\in\mathcal{M}_1$ with external field $V$ is defined by
\begin{equation}\label{eq:EsV}
  \mathrm{I}(\mu)
  := \iint_{\mathbb{R}^d\times\mathbb{R}^d}(K_s(x-y)+V(x)+V(y))\mu(\mathrm{d} x)\mu(\mathrm{d} y)
  \in(-\infty,+\infty].
\end{equation}
For $s\in(-2,d)$, with our choices of $V$, the integrand in the double
integral in \eqref{eq:EsV} is bounded below, $\mathrm{I}$ is strictly
convex\footnote{In other words, $K_s$ is conditionally strictly positive in the
  sense of Bochner, see for instance \cite[Section 4.4]{MR3970999}.} on
$\mathcal{M}_1$ and lower semicontinuous with compact level sets\footnote{We
  follow the probability theory standard and equip the convex set
  $\mathcal{M}_1$ with the topology of weak convergence with respect to
  continuous and bounded test functions, in other words the weak-$*$
  convergence.}. It has a unique global minimizer called the ``equilibrium
measure'' $\mu_{\mathrm{eq}}\in\mathcal{M}_1$; in other words,
\begin{equation}\label{eq:equ}
  \mathrm{I}(\mu_{\mathrm{eq}})= \min_{\mu\in\mathcal{M}_1}\mathrm{I}(\mu)
  \quad\text{and}\quad
  \mathrm{I}(\mu)>\mathrm{I}(\mu_{\mathrm{eq}})
  \text{ for all $\mu\neq\mu_{\mathrm{eq}}$, $\mu\in\mathcal{M}_1$}.
\end{equation}
Moreover, $\mu_{\mathrm{eq}}$ is compactly supported with finite energy
$\mathrm{I}(\mu_{\mathrm{eq}})<+\infty$. We refer to \cite{MR0350027} and \cite{MR3970999}
for more details. If $s<0$, then $K_s$ is not singular and we could have
$\mathrm{I}(\mu)<\infty$ for a $\mu\in\mathcal{M}_1$ having Dirac masses; in
particular $\mu_{\mathrm{eq}}$ could conceivably have Dirac masses. In contrast, if
$s\geq0$ then $K_s$ is singular and $\mathrm{I}(\mu)=+\infty$ whenever $\mu$
has Dirac masses; consequently $\mu_{\mathrm{eq}}$ cannot have such masses.

We first recall a classical result of M.~Riesz for the equilibrium measure
with constant external field in a closed ball and infinite outside the ball.
For $R>0$, let
\[
  B_R:=\{x\in\mathbb{R}^d:|x|\leq R\}
  \quad\text{and}\quad
  S_R:=\{x\in\mathbb{R}^d:|x|=R\}
\]
denote the ball and sphere of radius $R$ centered at the origin. In particular
$S_1=\mathbb{S}^{d-1}$ is the unit sphere, with surface area
$|\mathbb{S}^{d-1}|=2\pi^{d/2}/\Gamma(d/2)$. For a subset $S$ of
$\mathbb{R}^d$, we denote, when it makes sense, by $\sigma_{S}$ the uniform
probability measure on $S$ (normalized trace of Lebesgue measure).

\begin{theorem}[Riesz \cite{zbMATH03029943}]\label{th:riesz}
  Suppose that $d\in\{2,3,4,\ldots\}$ and $V=\begin{cases}
    0&\text{on $B_R$}\\
    +\infty&\text{outside $B_R$}\end{cases}$, $R>0$.
  \begin{itemize}
  \item If $-2<s\leq d-2$, then $\mu_{\mathrm{eq}}=\sigma_{S_R}$,
  \item If $d-2<s<d$, then $\mu_{\mathrm{eq}}$ is the probability measure
    % it cannot be radial arcsine since $s=d-1$ is impossible
    \begin{equation}\label{eq:rieszmeq}
      \mu_{\mathrm{eq}}(\mathrm{d} x)=
      \frac{\Gamma(1+\frac{s}{2})}{R^s\pi^{\frac{d}{2}}\Gamma(1+\frac{s-d}{2})}
      \frac{\mathbf{1}_{|x|\leq R}}{(R^2-|x|^2)^{\frac{d-s}{2}}}\mathrm{d} x
      =\frac{2\Gamma(1+\frac{s}{2})}{R^s\Gamma(1+\frac{s-d}{2})\Gamma(\frac{d}{2})}
      \frac{r^{d-1}\mathbf{1}_{r\leq R}}{(R^2-r^2)^{\frac{d-s}{2}}}\,\mathrm{d}r
      \mathrm{d} \sigma_{S_1},
    \end{equation}
    where $\mathrm{d} x$ and $\mathrm{d} r$ denote the Lebesgue measures on $\mathbb{R}^d$
    and on $[0,+\infty)$ respectively.\\
    Moreover, the equilibrium potential $U^{\mu_{\mathrm{eq}}}$ satisfies, for $x\in B_R$,
  \begin{equation}\label{eq:rieszumeq}
    U^{\mu_{\mathrm{eq}}}(x)
    :=(K_s*\mu_{\mathrm{eq}})(x) = \int_{\mathbb{R}^d} K_s(x-y) \mu_{\mathrm{eq}}(\mathrm{d} y)
    =\mathrm{I}(\mu_{\mathrm{eq}})=
    \frac{\Gamma(1+\frac{s}{2})\Gamma(\frac{d-s}{2})}{R^s\Gamma(\frac{d}{2})}.
  \end{equation}
  \end{itemize}
\end{theorem}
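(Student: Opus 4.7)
The plan is to invoke the Euler--Lagrange (Frostman) characterization: since the energy $\mathrm{I}$ is strictly convex on $\mathcal{M}_1(B_1)$, a probability measure $\mu$ on $B_1$ equals $\meq$ (for $V=0$) if and only if there is a constant $F$ such that $U^{\mu}(x) = F$ for $x \in \mathrm{supp}(\mu)$ and $U^{\mu}(x) \geq F$ for every $x \in B_1$. In both ranges of $s$ it therefore suffices to verify these two conditions for the candidate given in the statement; the identity $\mathrm{I}(\meq) = F$ then follows by integrating $U^{\meq} = F$ against $\meq$ itself.

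\textbf{Case $-2 < s \leq d-2$.} Take $\mu = \sigma_{\mathbb{S}^{d-1}}$. Rotational invariance forces $U^{\sigma}$ to be radial, hence constant on the sphere $\mathbb{S}^{d-1} = \mathrm{supp}(\mu)$; call this value $F$. A direct computation yields $\Delta|x|^{-s} = s(s-d+2)|x|^{-s-2}$ off the origin and $\Delta(-\log|x|) = -(d-2)/|x|^2$, from which one checks that $K_s$ is superharmonic on $\mathbb{R}^d\setminus\{0\}$ for every $-2 < s \leq d-2$ (including $s=0$ in all dimensions $d\geq 2$). Since the integrand is smooth for $x$ in the open ball and $y$ on the sphere, differentiation under the integral shows that $U^{\sigma}$ is superharmonic on $\{|x|<1\}$ with continuous boundary value $F$, so the minimum principle yields $U^{\sigma}(x) \geq F$ throughout $B_1$.

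\textbf{Case $d-2 < s < d$.} Let $\mu$ denote the candidate density in~\eqref{eq:rieszmeq}. Its support is the entire ball $B_1$, so the Frostman inequality collapses to the pointwise equality $U^{\mu}(x) = F$ for every $x \in B_1$. By rotational symmetry take $x = r\mathbf{e}_1$ with $r \in [0,1]$; writing $y$ in spherical coordinates and applying the Funk--Hecke formula to integrate over the angular variable reduces $U^{\mu}(r\mathbf{e}_1)$ to a one-dimensional integral involving a Gauss ${}_2F_1$ of argument $r^2\rho^2$ against the radial weight $\rho^{d-1}(1-\rho^2)^{-(d-s)/2}$. The exponent $(d-s)/2$ is tuned precisely so that a classical Euler beta integral collapses the expression to a quantity independent of $r$; specializing at $r = 0$ reduces the computation to a single beta integral, yielding the closed form $F = \Gamma(1+s/2)\Gamma((d-s)/2)/\Gamma(d/2)$.

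\textbf{Main obstacle.} The principal technical point is the identity $U^{\mu} \equiv F$ in Case 2: the Funk--Hecke reduction followed by the beta/hypergeometric evaluation must be carried out carefully, as the residual $r$-dependence in the ${}_2F_1$ factor only cancels against the tuned exponent of the radial weight. An instructive alternative is to recognize $\meq$ as arising from a Riesz composition formula applied to $K_{d-s}$, which turns the problem into an iterated Riesz potential of the indicator of $B_1$; this route is equivalent but still relies on the same family of special-function identities.
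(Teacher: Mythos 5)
Your Case~$-2<s\leq d-2$ is a correct and self-contained argument (Frostman condition, rotational invariance for equality on the sphere, superharmonicity of $K_s$ off the origin plus the minimum principle for the inequality on the interior); the paper itself does not spell this case out and simply cites Riesz, so your contribution here is fine.

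The gap is in Case~$d-2<s<d$. After the Funk--Hecke reduction you arrive at an integral of the form
\[
\int_0^1 {}_2F_1\!\left(\tfrac{s}{4},\tfrac{s+2}{4};\tfrac{d}{2};\tfrac{4r^2\rho^2}{(r^2+\rho^2)^2}\right)\frac{\rho^{d-1}}{(r^2+\rho^2)^{s/2}(1-\rho^2)^{(d-s)/2}}\,\dd\rho ,
\]
and the whole content of the theorem in this range is that this expression is independent of $r$. You assert that ``a classical Euler beta integral collapses the expression,'' but that is not an argument: the integrand has a genuine $r$-dependence through both the hypergeometric factor and the prefactor $(r^2+\rho^2)^{-s/2}$, and there is no beta-integral evaluation that eliminates it. Indeed, this $r$-independence is precisely the Riesz formula (Lemma~\ref{le:riesz}), equivalently the identity of Corollary~\ref{co:riesz}, which the authors emphasize is not in the standard tables and is not recognized by computer algebra systems. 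The paper devotes the entirety of Appendix~\ref{ap:le:riesz} to proving Lemma~\ref{le:riesz}: an inversion centered at the origin maps the integral over $B_1$ to one over the exterior of the ball, the cross-ratio/Kelvin bookkeeping produces the factor $(1-|y|^2)^{\alpha/2}$, a trigonometric change of variable with a projective-geometric interpretation collapses the inner angular integral via~\eqref{eq:id}, and only the remaining one-dimensional radial integral is an Euler beta integral. The $r=0$ specialization you carry out correctly computes the \emph{value} of the constant provided constancy is already known, but it cannot establish constancy. So what you label the ``main obstacle'' is in fact the entire proof in this case, and it is left unaddressed. Your alternative suggestion via a Riesz composition/semigroup formula is a legitimate route, but you would still need to evaluate an iterated Riesz potential of $\mathbf{1}_{B_1}$ and verify its constancy on the ball, which is of comparable difficulty and also not carried out.
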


The case $d-2<s<d$ in Theorem \ref{th:riesz} is a direct consequence of the
following formula.

\begin{lemma}[Riesz formula \cite{zbMATH03029943}]\label{le:riesz}
  If $d\in\{2,3,4,\ldots\}$, $0\leq d-2<s<d$, and $R>0$, then for $x\in B_R$,
  \begin{equation}\label{eq:riesz}
    \int_{\mathbb{R}^d}
    \frac{|x-y|^{-s}}{(R^2-|y|^2)^{\frac{d-s}{2}}}
    \mathbf{1}_{|y|\leq R}\mathrm{d} y
    =\frac{\pi^{\frac{d}{2}+1}}{\Gamma(\frac{d}{2})\sin(\frac{\pi}{2}(d-s))}.
  \end{equation}
\end{lemma}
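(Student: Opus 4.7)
Define $f(x) := \int_{B_1}|x-y|^{-s}(1-|y|^2)^{-(d-s)/2}\,dy$ for $x\in B_1$. The plan is a Funk\,--\,Hecke reduction of $f$ to a single radial integral with a Gauss hypergeometric kernel, followed by a hypergeometric collapse specific to the exponent $-(d-s)/2$.

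Since the integration density is radial in $y$, a rotation argument gives $f(x) = g(|x|)$. In polar coordinates $y = \rho\omega$, the pushforward of $\sigma_{\mathbb{S}^{d-1}}$ under $\omega\mapsto t = \langle\hat x,\omega\rangle$ is the probability measure $c_d(1-t^2)^{(d-3)/2}\,dt$ on $[-1,1]$ with $c_d = \Gamma(d/2)/(\sqrt\pi\,\Gamma((d-1)/2))$. The substitution $u = (1+t)/2$ transforms the angular integral into Euler's representation of ${}_2F_1$, and after applying the Legendre duplication formula one obtains
\[
H(r,\rho) := \int_{\mathbb{S}^{d-1}}|re-\rho\omega|^{-s}\,d\sigma(\omega) = (r+\rho)^{-s}\,{}_2F_1\!\left(\frac{s}{2},\,\frac{d-1}{2};\,d-1;\,\frac{4r\rho}{(r+\rho)^2}\right),
\]
so that $g(r) = |\mathbb{S}^{d-1}|\int_0^1 \rho^{d-1}(1-\rho^2)^{-(d-s)/2}\,H(r,\rho)\,d\rho$. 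At $r = 0$ this gives
\[
g(0) = \frac{|\mathbb{S}^{d-1}|}{2}\,B\!\left(\frac{d-s}{2},\,1-\frac{d-s}{2}\right) = \frac{\pi^{d/2+1}}{\Gamma(d/2)\sin(\pi(d-s)/2)}
\]
by the Beta integral, the reflection formula $\Gamma(z)\Gamma(1-z) = \pi/\sin(\pi z)$, and $|\mathbb{S}^{d-1}| = 2\pi^{d/2}/\Gamma(d/2)$; the Beta integral converges thanks to the hypothesis $0 < d-s < 2$. This is the right-hand side of \eqref{eq:riesz}.

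It remains to show that $g(r)$ is independent of $r$ on $[0,1]$. I would split the $\rho$-integral at $\rho = r$ so that $(r+\rho)^{-s}$ can be expanded uniformly on each piece in the ratio of the smaller to the larger variable, develop ${}_2F_1$ as a power series in $4r\rho/(r+\rho)^2$, evaluate each resulting radial term as a Beta integral in $\rho^2$, and reassemble the double series via a Landen-type quadratic transformation of ${}_2F_1$ (the Landen transform being one of the paper's keywords). The algebraic miracle is that with the specific outer exponent $-(d-s)/2$, the reassembled hypergeometric $g(r) = C\cdot{}_2F_1(a,b;c;r^2)$ has its second parameter $b$ equal to $0$, so ${}_2F_1(a,0;c;\cdot)\equiv 1$ and $g(r) \equiv g(0) = C$. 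The main obstacle lies in this reassembly step: executing the split-region expansions consistently and identifying the final hypergeometric parameters amounts to a terminating Chu\,--\,Vandermonde\,--\,Saalsch\"utz-type identity at unit argument.
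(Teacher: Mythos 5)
Your computation of $g(0)$ is correct, and the Funk\,--\,Hecke reduction to a radial integral with a ${}_2F_1$ kernel is a clean and correct setup. However, the lemma's entire content is that $g(r)=g(0)$ for \emph{all} $r\in[0,1)$, and that step is not proved — it is only sketched, with the key assertion left as a conjecture. You write that after splitting at $\rho=r$, expanding, and ``reassembling the double series via a Landen-type quadratic transformation,'' the result is $C\cdot{}_2F_1(a,b;c;r^2)$ with $b=0$, but you do not identify $a$, $b$, $c$ nor carry out the reassembly; you acknowledge yourself that this ``amounts to a terminating Chu\,--\,Vandermonde\,--\,Saalsch\"utz-type identity'' without stating or proving which one. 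There is also an internal tension in the sketch: expanding ``in the ratio of the smaller to the larger variable'' on the two pieces $\rho<r$ and $\rho>r$ naturally produces a Gegenbauer expansion, not a power series in $4r\rho/(r+\rho)^2$ (whose argument tends to $1$ as $\rho\to r$, precisely where uniform convergence fails and where the singularity of $|x-y|^{-s}$ sits). Making the two-region expansion rigorous, checking the interchange of sum and integral near $\rho=r$, and then actually producing the collapsing hypergeometric identity is the whole difficulty, and none of it is done.

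For comparison, the paper's proof in Appendix~\ref{ap:le:riesz} avoids all series manipulation. It applies a geometric transformation $S$ (an inversion centered at $y$ composed with a point symmetry) that maps $B_r$ to its complement; a cross-ratio invariance argument under M\"obius transforms (Lemmas \ref{le:crossratio}, \ref{le:inversion}, \ref{le:commute}) converts the weighted ball integral into a complementary integral whose $x$-dependence cancels exactly, $I(y)=(r^2-|y|^2)^{\alpha/2}\int_{|x^*|\ge r}(|x^*|^2-r^2)^{-\alpha/2}|x^*-y|^{-d}\,\dd x^*$. After Funk\,--\,Hecke and a trigonometric substitution with a geometric interpretation (Figure~\ref{fi:geomid}, identity \eqref{eq:id}), the $r$-dependence disappears by elementary changes of variable and the Euler reflection formula finishes the evaluation — with no hypergeometric identities needed. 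If you want to pursue your analytic route, the way to make it rigorous without inversions is probably not a double series but rather the Euler\,--\,Lagrange / Frostman characterization as in Section~\ref{se:co:riesz}: once you know from potential theory that $\meq$ in \eqref{eq:rieszmeq} is the equilibrium measure, the potential $U^{\meq}$ is automatically constant on $B_1$ and the value is read off at $x=0$, which is essentially your $g(0)$ computation. But proving \eqref{eq:riesz} directly by an explicit hypergeometric collapse, as your plan proposes, would require actually exhibiting and verifying the identity you are postulating.
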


The proof of Theorem \ref{th:riesz} and Lemma \ref{le:riesz} can be found,
together with some geometric aspects, in the works of M.~Riesz
\cite[p.~438--439]{zbMATH03003508} and \cite[\S~16, Eq.~(1)]{zbMATH03029943},
where it is mentioned that the cases $d=1,2,3$ were already considered by
Pólya and Szeg\H{o} in \cite{polya-szego}. It can also be found in the book
\cite[\S~II.3.13, p.~163--164, and Appendix p.~399\,--\,400]{MR0350027}, and is
stated in \cite[Eq.~(4.6.13)]{MR3970999}. The proof sketched by Riesz, with a
bit more detail by Landkof, involves first a geometric inversion %argument
transforming the integral on the ball into an integral on its complement, and
second a trigonometric substitution which has a geometric interpretation, both
steps being inspired by the analytic-geometric techniques used classically for
elliptic integrals since the eighteenth century. For the reader's convenience,
a detailed proof of Lemma \ref{le:riesz} is given in Appendix
\ref{ap:le:riesz}. After publication of the present article in Journal of Mathematical Analysis and Applications, we have found that Dyda, Kuznetsov, and M. Kwaśnicki gave in \cite{MR3640641} an alternative analytic (and short!) proof of Lemma \ref{le:riesz}. Since this proof is somewhat hidden by the Meijer G-function material in \cite{MR3640641}, we give, in Appendix \ref{ap:se:analytic:proof:riesz} of the present arXiv post-publication update, a straight version of this analytic proof, without using Meijer G-functions.

Our first result, Corollary \ref{co:riesz}, is a simple consequence of Theorem
\ref{th:riesz}. It relates an equilibrium measure of potential theory with an
integral identity for special functions (here a ${}_2F_1$ hypergeometric
function). Before stating it, let us recall the \emph{Newton binomial series}
\begin{equation}\label{eq:binom}
  \frac{1}{(1-z)^{\alpha}}=\sum_{n=0}^\infty(\alpha)_n\frac{z^n}{n!},
  \quad \alpha,z\in\mathbb{C},\ |z|<1,
\end{equation}
where $(\alpha)_n:=\alpha(\alpha+1)\cdots(\alpha+n-1)$ is the \emph{Pochhammer
  symbol} for the rising factorial, with the convention $(\alpha)_0:=1$ if
$\alpha\neq0$. If $\Re(\alpha)>0$, then
$(\alpha)_n=\Gamma(\alpha+n)/\Gamma(\alpha)$. More generally, the
\emph{hypergeometric function} with parameters
$(a_1,\ldots,a_p)\in\mathbb{C}^p$ and $(b_1,\ldots,b_q)\in\mathbb{C}^q$, at
$z\in\mathbb{C}$, $|z|<1$, is given (when it makes sense) by the series
\begin{equation}\label{eq:hype}
  {}_pF_q(a_1,\ldots,a_p;b_1,\ldots,b_q;z)
  :=\sum_{n=0}^\infty
  \frac{(a_1)_n\cdots(a_p)_n}{(b_1)_n\cdots(b_q)_n}
  \frac{z^n}{n!}.
\end{equation}
Special choices of the parameters $a_1,\ldots,a_p$ and $b_1,\ldots,b_q$ allow
us to recover many special functions, for instance
${}_2F_1(\alpha,\beta;\beta;z)=(1-z)^{-\alpha}$,
${}_2F_1(1,1;2;-z)=\frac{\log(1+z)}{z}$, and
${}_2F_1(\frac{1}{2},\frac{1}{2};\frac{3}{2};z^2)=\frac{\arcsin(z)}{z}$.
Actually one of the main historical motivations for the introduction and study
of hypergeometric functions is the unification of as many as possible special
functions via series expansions. For instance the complete \emph{elliptic
  integral} of first and second kind, $K$ and $E$ respectively, satisfy for $z\in[0,1]$,
\begin{align}
  K(z)
  :=
    \int_0^{\frac{\pi}{2}}\frac{\mathrm{d} \theta}{\sqrt{1-z\sin^2(\theta)}}
    =\int_0^1\frac{\mathrm{d} t}{\sqrt{1-zt^2}\sqrt{1-t^2}}
  =\frac{\pi}{2}\,{}_2F_1\Bigr(\frac{1}{2},\frac{1}{2};1;z\Bigr)
  \label{eq:EllipticK}
  \end{align}
  and
  \begin{align}
  E(z)
  :=\int_0^{\frac{\pi}{2}}\sqrt{1-z\sin^2(\theta)} \; \mathrm{d} \theta
   =\int_0^1\frac{\sqrt{1-zt^2}}{\sqrt{1-t^2}}\,\mathrm{d} t
  =\frac{\pi }{2}\,{}_2F_1\Bigr(-\frac{1}{2},\frac{1}{2};1;z\Bigr).\label{eq:EllipticE}
\end{align}
They can be extended to the complex plane, with a branch cut discontinuity
running from $1$ to $\infty$. For more basic facts about these functions, we
refer for instance to the classical books \cite{MR0099454,MR0277773}.
Here we only remark that $K,E\geq0$ on the interval $[0,1]$,
$K(0)=E(0)=\frac{\pi}{2}$, $K(1)=\infty$, and $E(1)=1$.

The following identity is an easy consequence of Theorem \ref{th:riesz} (see
Section~\ref{se:co:riesz} and Remark \ref{rm:invlap}).

\begin{corollary}[Special function identity\footnote{It is worth noting that
    $4r^2\lambda^2\leq(\lambda^2+r^2)^2$ with equality if and only if
    $\lambda=r$, so that the radius of convergence of the ${}_2F_1$ in
    \eqref{eq:co:riesz}, which is equal to $1$, is reached in the interior of
    the interval of integration over $r$ when $\lambda\in[0,1)$.%
  }]\label{co:riesz} For $d\in\{2,3,4,\ldots\}$, $d-2<s<d$,
$\lambda\in[0,1]$,
  \begin{equation}\label{eq:co:riesz}
    \int_0^1
    {}_2F_1\left(\frac{s}{4},\frac{s+2}{4};\frac{d}{2};\frac{4 r^2 \lambda ^2}{(\lambda^2+r^2)^2}\right)
    \frac{r^{d-1}}{(\lambda
      ^2+r^2)^{\frac{s}{2}}(1-r^2)^{\frac{d-s}{2}}}\,\mathrm{d}r
    =\frac{\pi}{2\sin(\frac{\pi}{2}(d-s))}.
  \end{equation}
\end{corollary}

Here are some special cases worth noting for the hypergeometric function in
\eqref{eq:co:riesz}:
\begin{itemize}
\item for $(d,s)=(2,1)$ we get
  ${}_2F_1\left(\frac{s}{4},\frac{s+2}{4};\frac{d}{2};z\right)=
  {}_2F_1\left(\frac{1}{4},\frac{3}{4};1;z\right)=
  \frac{2 K\left(\frac{2 \sqrt{z}}{\sqrt{z}+1}\right)}{\pi \sqrt{\sqrt{z}+1}}$.\\[1ex]
\item for $(d,s)=(3,2)$ we get
  ${}_2F_1\left(\frac{s}{4},\frac{s+2}{4};\frac{d}{2};z\right)=
  {}_2F_1\left(\frac{1}{2},1;\frac{3}{2};z\right)=
  \frac{\tanh^{-1}(\sqrt{z})}{\sqrt{z}}$. \\[1ex]
\item for $(d,s)=(5,4)$ we get
  ${}_2F_1\left(\frac{s}{4},\frac{s+2}{4};\frac{d}{2};z\right)=
  {}_2F_1\left(1,\frac{3}{2};\frac{5}{2};z\right)=
  \frac{3 \left(\sqrt{z} \tanh^{-1}\left(\sqrt{z}\right)-z\right)}{z^2}$.\\[1ex]
\end{itemize}

Our main potential theoretic result is the following external field version of Theorem
\ref{th:riesz}.

\begin{theorem}[Main result]\label{th:d=s+3}
  Suppose that $d\in\{2,3,4,\ldots\}$ and $s=d-3$, namely
  \[
    (d,s)\in\{(2,-1),(3,0),(4,1),\ldots\}.
  \]
  Let
  \begin{equation}\label{eq:d=s+3RV}
    R:=
    \left(\frac{c_{d,d-3}\sqrt{\pi}}{4\gamma}
      \frac{\Gamma\bigr(\frac{d+1}{2}\bigr)}{\Gamma\bigr(\frac{d+2}{2}\bigr)}\right)^{\frac{1}{d-1}},
    \quad\text{where}\quad
    c_{d, s} := \begin{cases}
      |s| (d-2-s) & \text{if  $s\neq 0$} \\
      d-2 & \text{if $s = 0$}
    \end{cases}.
  \end{equation}
  If $V = \gamma\left|\cdot\right|^2$, $\gamma>0$, then the equilibrium
  measure $\mu_{\mathrm{eq}}$ for the minimum energy problem on $\mathbb{R}^d$
  (\ref{eq:EsV}--\ref{eq:equ}) with kernel $K_s$ and external field $V$ is the
  ``radial arcsine distribution''
  \begin{equation}\label{eq:d=s+3meq}
    \mu_{\mathrm{eq}}(\mathrm{d} x)
    =
    \frac{\Gamma\bigr(\frac{d+1}{2}\bigr)}{\pi^{\frac{d+1}{2}}R^{d-1}}
    \frac{\mathbf{1}_{|x|\leq R}}{\sqrt{R^2-|x|^2}}
    \mathrm{d} x
    =
    \frac{2\Gamma\bigr(\frac{d+1}{2}\bigr)}{\sqrt{\pi}\Gamma(\frac{d}{2})R^{d-1}}
    \frac{r^{d-1}\mathbf{1}_{r\leq R}}{\sqrt{R^2-r^2}}
    \,\mathrm{d}r\,\mathrm{d} \sigma_{S_1},
  \end{equation}
  where $\mathrm{d} x$ and $\mathrm{d} r$ are the Lebesgue measures on $\mathbb{R}^d$ and on
  $[0,\infty)$ respectively. Moreover, this $\mu_{\mathrm{eq}}$ is also the equilibrium
  measure in Theorem \ref{th:riesz} with $s=d-1$ and $R$ as in
  \eqref{eq:d=s+3RV}.
\end{theorem}

Theorem~\ref{th:d=s+3} is proved in Section \ref{se:th:d=s+3}.

Several extensions of Theorem \ref{th:d=s+3} for more general $(d,s)$ or $V$
are considered in \cite{hdep}.

\begin{table}[htbp]
  \centering
  \begin{tabular}{c|c|c|c}
    $d=2,s=-1$
    & $d=3,s=0$
    & $d=4,s=1$
    & $d=\infty,s=\infty-3$\\\hline
    $\displaystyle\frac{\pi}{8}\approx 0.392699$
    & $\displaystyle\frac{1}{\sqrt{3}}\approx 0.57735$
    & $\displaystyle\frac{1}{2}\sqrt[3]{\frac{3\pi}{4}}\approx 0.665335$
    & $1$\\[1em]
  \end{tabular}
  \caption{\label{tb:d=s+3} Some special values of the critical radius $R$ in
    Theorem \ref{th:d=s+3}, with $\gamma=1$.}
\end{table}
Table~\ref{tb:d=s+3} gives values of the radius $R$ in \eqref{eq:d=s+3RV} for
$\gamma = 1$ and various values of $d$. For $\gamma = 1$, integer $d \geq 2$,
the function $d\mapsto R$ achieves its minimum $\approx0.392699$ at $d=2$
($s=-1$) and its maximum $\approx 1.04747$ at $d=16$ ($s=13$), and these
values are the unique extreme points. Regarding high dimensional behavior or
asymptotic analysis, we have $\lim_{d=s+3\to\infty}R=1$.

As we shall verify, Theorem \ref{th:d=s+3} yields the following integral
formulas.
\begin{corollary}[Integral formula]\label{co:formulas}
  Let $d\in\{2,3,4,\ldots\}$ and $\lambda\in[0,1]$. Then
  \begin{equation}\label{eq:co:formula:sin}
    \int_0^1\mathcal{S}_{d-3}\Bigr(\frac{4\lambda r}{(\lambda+r)^2}\Bigr)
    \frac{(\lambda+r)^{3-d}r^{d-1}\,\mathrm{d}r}{\sqrt{1-r^2}}
    =\frac{\pi^{\frac{3}{2}}\Gamma(\frac{d-1}{2})}{2^{d+1}\Gamma(\frac{d}{2})}\Bigr(\Bigr(\frac{3}{d}-1\Bigr)\lambda^2
    +1\Bigr),
  \end{equation}
  where
  \begin{equation}
    \mathcal{S}_s(z):=\int_0^{\frac{\pi}{2}}\!\frac{\sin^{s+1}(2\alpha)\mathrm{d}\alpha}{2^{s+1}(1-z\sin^2(\alpha))^{\frac{s}{2}}}
    =\int_0^1\!\frac{t^{s+1}(1-t^2)^{\frac{s}{2}}\mathrm{d} t}{(1-zt^2)^{\frac{s}{2}}}
    =\frac{\Gamma(\frac{s+2}{2})^2}{2\Gamma(s+2)}\,{}_2F_1\Bigr(\frac{s+2}{2},\frac{s}{2};s+2;z\Bigr)\label{eq:co:formula:sinS}.
  \end{equation}
  Equivalently, for $d\in\{2,3,4,\ldots\}$ and $\lambda\in[0,1]$,
    \begin{equation}\label{eq:co:formula:sinF}
      \int_0^1{}_2F_1\Bigr(\frac{d-1}{2},\frac{d-3}{2};d-1;\frac{4\lambda r}{(\lambda+r)^2}\Bigr)\frac{(\lambda+r)^{3-d}r^{d-1}}{\sqrt{1-r^2}}\,\mathrm{d}r
      =\frac{\pi}{4}\Bigr(\Bigr(\frac{3}{d}-1\Bigr)\lambda^2
      +1\Bigr).
  \end{equation}
\end{corollary}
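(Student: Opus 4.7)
The plan is to derive \eqref{eq:co:formula:sin} directly from Theorem \ref{th:d=s+3} combined with the Euler--Lagrange variational characterization of the equilibrium measure. By Theorem \ref{th:d=s+3}, the radial arcsine measure \eqref{eq:d=s+3meq} has full support on $B_R$, so the Euler--Lagrange condition reads $U^{\meq}(x) + \gamma|x|^2 = c$ for all $x \in B_R$, where $U^{\meq}(x) = \int K_s(x-y)\,\meq(\dd y)$ is the Riesz potential and $c$ is a constant. In particular, $U^{\meq}$ is an affine function of $|x|^2$ on $B_R$, which already accounts for the $\lambda^2$-affine shape of the right-hand side of \eqref{eq:co:formula:sin}.

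To identify $U^{\meq}(x)$ explicitly for $|x| = \lambda R$, I would invoke the rotational invariance of $\meq$ and apply the Funk--Hecke formula to reduce the sphere integration to
\[
  \int_{\mathbb{S}^{d-1}} |x - r\omega|^{-s}\,\dd\sigma(\omega)
  = \frac{\Gamma(d/2)}{\sqrt{\pi}\,\Gamma((d-1)/2)}
  \int_0^\pi \frac{\sin^{d-2}\theta\,\dd\theta}{(\lambda^2 R^2 - 2\lambda R r\cos\theta + r^2)^{s/2}}
\]
(with the obvious logarithmic analogue when $d=3$, $s=0$). Substituting $\theta = 2\alpha$ and invoking the identity
\[
  \lambda^2 R^2 - 2\lambda R r\cos(2\alpha) + r^2
  = (\lambda R + r)^2 \bigl(1 - z\sin^2\alpha\bigr),
  \qquad z = \frac{4\lambda R r}{(\lambda R + r)^2}
\]
(after one reflection $\alpha \mapsto \pi/2 - \alpha$), the angular integral collapses to $2^{d-1}(\lambda R + r)^{3-d}\, S_{d-3}(z)$ by the defining formula \eqref{eq:co:formula:sinS}. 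Plugging in the explicit density from \eqref{eq:d=s+3meq} and rescaling $r = Rr'$, under which $z$ is invariant, then yields
\[
  U^{\meq}(x) = \mathrm{sign}(s)\, C_d\, R^{3-d}
  \int_0^1 S_{d-3}\!\Bigl(\tfrac{4\lambda r}{(\lambda + r)^2}\Bigr)
  \frac{(\lambda + r)^{3-d}\,r^{d-1}}{\sqrt{1-r^2}}\,\dd r
\]
for an explicit constant $C_d$ built from $\Gamma$-values.

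Equating this with $c - \gamma R^2 \lambda^2$ produces \eqref{eq:co:formula:sin} up to identifying the two constants on the right. The constant term is pinned down by setting $\lambda = 0$, using $S_{d-3}(0) = \sqrt{\pi}\,\Gamma((d-1)/2)/(2^{d-1}\Gamma(d/2))$ (from Legendre's duplication formula) together with $\int_0^1 r^2/\sqrt{1-r^2}\,\dd r = \pi/4$; the coefficient of $\lambda^2$ comes from $-\gamma R^2/C_d$ upon substituting the explicit radius \eqref{eq:d=s+3RV}, which is precisely tuned to produce the factor $3/d - 1$. The logarithmic case $d=3$ is handled in parallel via the logarithmic analogue of the Funk--Hecke identity, or by continuity in $s$. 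Finally, \eqref{eq:co:formula:sinF} follows at once from the hypergeometric representation of $S_{d-3}$ given in \eqref{eq:co:formula:sinS}. I expect the principal obstacle to be the careful $\Gamma$-factor bookkeeping needed to verify that the specific radius in \eqref{eq:d=s+3RV} simultaneously reproduces both the constant term and the $\lambda^2$-coefficient on the right-hand side of \eqref{eq:co:formula:sin}; this is essentially the same compatibility check that underpins Theorem \ref{th:d=s+3} itself.
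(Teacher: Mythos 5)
Your proposal is correct and follows essentially the same route as the paper: Euler--Lagrange constancy of $U^{\meq}+\gamma|\cdot|^2$ on $B_R$ from Theorem \ref{th:d=s+3}, Funk--Hecke reduction of the sphere integral, the substitution $\theta=2\alpha$ (Landen transform) to produce $S_{d-3}$, and pinning down the constant and the $\lambda^2$-coefficient using $\lambda=0$ together with the explicit radius \eqref{eq:d=s+3RV}. The only cosmetic difference is that you reach the $S_{d-3}$ form via the direct factorization $\lambda^2R^2-2\lambda Rr\cos(2\alpha)+r^2=(\lambda R+r)^2(1-z\sin^2\alpha)$ after the reflection $\alpha\mapsto\pi/2-\alpha$, whereas the paper's Lemma \ref{le:landend} routes through the parameters $\rho_1,\rho_2$ and the functional identity $S_s(-z)=(1+z)^{-s/2}S_s(z/(1+z))$, arriving at the same formula.
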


Corollary \ref{co:formulas} is proved in Section \ref{se:co:formulas}.

The formula \eqref{eq:co:formula:sinF} comes from the Euler\,--\,Lagrange
characterization related to Theorem \ref{th:d=s+3}. Numerical experiments
suggest that \eqref{eq:co:formula:sin} and \eqref{eq:co:formula:sinF} remain
valid whenever the parameter $d>1$ is real.

It is tempting to regard $\mathcal{S}_s$ as a \emph{special function} in its own right.
When $(d,s)=(2,-1)$, it becomes the complete elliptic integral of the second
kind, namely $\mathcal{S}_{-1} = E$, and \eqref{eq:co:formula:sin} becomes
\eqref{eq:co:formulas:E} below.

\begin{corollary}[More integral formulas\footnote{Note that
    $\frac{4r\lambda}{(\lambda+r)^2}=\frac{4x}{(1+x)^2}$ with
    $x:=\frac{\lambda}{r}$. The map $x \mapsto \frac{4x}{(1+x)^2}$ is the
    \emph{Landen transform} of elliptic integrals.}]\label{co:formulas:more}
  For $\lambda\in[0,1]$,
  \begin{align}
    \int_0^1
    \bigr((\lambda+r)^2\log(\lambda+r)-(\lambda-r)^2\log|\lambda-r|\bigr)
    \frac{r\,\mathrm{d}r}{\sqrt{1-r^2}}
    &=\pi\Bigr(\frac{\lambda^3}{3}+(1-\log 2)\lambda\Bigr),\label{eq:co:formulas:log}\\
    \int_0^1((\lambda +r)\log(\lambda +r)-(\lambda-r)\log|\lambda-r|)\frac{r\,\mathrm{d}r}{\sqrt{1-r^2}}
    &=\frac{\pi}{2}\Bigr(\lambda^2+\frac{1}{2}-\log 2\Bigr),\label{eq:co:formulas:log'}\\
    \int_0^1E\left(\frac{4\lambda r}{(\lambda+r)^2}\right)
    \frac{(\lambda+r)r\,\mathrm{d}r}{\sqrt{1-r^2}}
    &=\frac{\pi^2}{8}\Bigr(\frac{\lambda^2}{2}+1\Bigr),\label{eq:co:formulas:E}\\
    \int_0^1
      K\left(\frac{4\lambda r}{(r+\lambda)^2}\right)
    \frac{(\lambda-r)r\,\mathrm{d}r}{\sqrt{1-r^2}}
    &=\frac{\pi^2}{8}\Bigr(\frac{3\lambda^2}{2}-1\Bigr),
      \label{eq:co:formulas:K}
  \end{align}
  where $E$ and $K$ are the special functions defined in \eqref{eq:EllipticE}
  and \eqref{eq:EllipticK}.
\end{corollary}

Corollary \ref{co:formulas:more} is proved in Section
\ref{se:co:formulas:more} by applying further transformations to the
Euler\,--\,Lagrange conditions of Theorem \ref{th:d=s+3} in the special cases
$(d,s)\in\{(2,-1),(3,0)\}$.

To the best of our knowledge, the formulas provided by Corollaries
\ref{co:riesz}, \ref{co:formulas}, and \ref{co:formulas:more} are not found in
the existing catalogs of identities and tables for series and integrals such
as \cite{MR0058756}, \cite{MR1054647}, and \cite{MR2460394}, and are not recognized by advanced
software such as Maplesoft Maple and Wolfram Mathematica. However it is worth
noting that these softwares do recognize the first parts of
\eqref{eq:co:formulas:log} and \eqref{eq:co:formulas:log'} in terms of
${}_3F_2$:
\begin{multline}
  \int_0^1(\lambda +r)^2\log(\lambda +r)\frac{r \,\mathrm{d}r}{\sqrt{1-r^2}}
  \\=
  \frac{\pi  \, _3F_2\left(\frac{1}{2},1,1;2,3;\frac{1}{\lambda ^2}\right)}{16 \lambda }-\frac{2 \, _3F_2\left(1,1,\frac{3}{2};\frac{5}{2},\frac{7}{2};\frac{1}{\lambda ^2}\right)}{45 \lambda ^2}+\frac{\pi  \lambda }{4}+\frac{3 \lambda  (2 \lambda +\pi )+4}{6}\log (\lambda )+1\label{eq:co:formulas:log2}
\end{multline}
and
\begin{multline}
  \int_0^1(\lambda +r)\log(\lambda +r)\frac{r \,\mathrm{d}r}{\sqrt{1-r^2}}\\
  =
  \frac{32 \lambda  \, _3F_2\left(\frac{1}{2},1,1;\frac{3}{2},\frac{5}{2};\frac{1}{\lambda ^2}\right)-3 \pi  \, _3F_2\left(1,1,\frac{3}{2};2,3;\frac{1}{\lambda ^2}\right)+24 \lambda ^2 ((4 \lambda +\pi ) \log (\lambda )+\pi )}{96 \lambda ^2}.\label{eq:co:formulas:log'2}
\end{multline}

\section{Proofs}

\subsection{Proof of Corollary \ref{co:riesz}}
\label{se:co:riesz}

Let us consider the settings of Theorem \ref{th:riesz} in the case $d-2<s<d$.
By scaling we can assume without loss of generality that $R=1$. Then, using
the Funk\,--\,Hecke formula \eqref{eq:funkhecke}, we get, for
$x\in\mathbb{R}^d$, $x\neq0$, denoting $\lambda:=|x|$ and
$C:=\frac{\Gamma(1+\frac{s}{2})}{\pi^{\frac{d}{2}}\Gamma(1+\frac{s-d}{2})}$,
\begin{align*}
  U^{\mu_{\mathrm{eq}}}(x)
  &=C\int_{|y|\leq 1}\frac{|x-y|^{-s}}{(1-|y|^2)^{\frac{d-s}{2}}}\mathrm{d} y\\
  &=C\int_0^1\Bigr(\int_{S_1}\frac{(\lambda^2+r^2-2\lambda
    r\frac{x}{|x|}\cdot u)^{-\frac{s}{2}}}{(1-r^2)^{\frac{d-s}{2}}}r^{d-1}\mathrm{d} u\Bigr)\,\mathrm{d}r\\
  &=C\tau_{d-1}|S_1|
    \int_0^1\Bigr(\int_{-1}^1\frac{(1-t^2)^{\frac{d-3}{2}}}{(\lambda^2+r^2-2\lambda rt)^{\frac{s}{2}}}\mathrm{d} t\Bigr)\frac{r^{d-1}}{(1-r^2)^{\frac{d-s}{2}}}\,\mathrm{d}r.
\end{align*}
Now, since $\frac{2\lambda r}{(\lambda^2+r^2)}\in[0,1/2]$, using the Newton
binomial series \eqref{eq:binom},
\begin{align*}
  \int_{-1}^1\frac{(1-t^2)^{\frac{d-3}{2}}}{(\lambda^2+r^2-2\lambda rt)^{\frac{s}{2}}}\mathrm{d} t
  &=\frac{1}{(\lambda^2+r^2)^{\frac{s}{2}}}\int_{-1}^1\frac{(1-t^2)^{\frac{d-3}{2}}}{(1-\frac{2\lambda
    r}{\lambda^2+r^2}t)^{\frac{s}{2}}}\mathrm{d} t\\
  &=\frac{1}{(\lambda^2+r^2)^{\frac{s}{2}}}\sum_{n=0}^\infty
    \frac{(\frac{s}{2})_{2n}}{(2n)!}\Bigr(\frac{4r^2\lambda^2}{(\lambda^2+r^2)^2}\Bigr)^n\int_{-1}^1(1-t^2)^{\frac{d-3}{2}}t^{2n}\mathrm{d} t\\
  &=\frac{1}{(\lambda^2+r^2)^{\frac{s}{2}}}\sum_{n=0}^\infty
    \frac{(\frac{s}{2})_{2n}}{(2n)!}\Bigr(\frac{4r^2\lambda^2}{(\lambda^2+r^2)^2}\Bigr)^n
  \frac{\Gamma(\frac{d-1}{2})\Gamma(n+\frac{1}{2})}{\Gamma(\frac{d}{2}+n)}\\
  &=\frac{\sqrt{\pi }\Gamma(\frac{d-1}{2})}{\Gamma(\frac{d}{2})}\frac{\,
    _2F_1\left(\frac{s}{4},\frac{s+2}{4};\frac{d}{2};\frac{4 r^2 \lambda
    ^2}{\left(r^2+\lambda ^2\right)^2}\right)}{(\lambda
    ^2+r^2)^{\frac{s}{2}}},
\end{align*}
where we have use the identities (related to the Legendre duplication formula
\eqref{eq:duplication})
\[
  \Bigr(\frac{s}{2}\Bigr)_{2n}
  =2^{2n}\Bigr(\frac{s}{4}\Bigr)_n
  \Bigr(\frac{s}{4}+\frac{1}{2}\Bigr)_n
  \quad\text{and}\quad
  2^{2n}\Bigr(\frac{1}{2}\Bigr)_n
  =\frac{(2n)!}{n!}.
\]
Note that we could alternatively proceed as in the proof of Lemma
\ref{le:landend} and use the Euler integral formula \eqref{eq:2F1Int}. Next,
using the fact that
$\tau_{d-1}=\frac{\Gamma(\frac{d}{2})}{\sqrt{\pi}\Gamma(\frac{d-1}{2})}$, we
get, for $x\in\mathbb{R}^d$, $x\neq0$,
\begin{align*}
  U^{\mu_{\mathrm{eq}}}(x)
  &=
    C|S_1|
    \int_0^1
    \, _2F_1\left(\frac{s}{4},\frac{s+2}{4};\frac{d}{2};\frac{4 r^2 \lambda ^2}{(\lambda^2+r^2)^2}\right)
    \frac{r^{d-1}}{(\lambda
    ^2+r^2)^{\frac{s}{2}}(1-r^2)^{\frac{d-s}{2}}}\,\mathrm{d}r.
\end{align*}
(This last integral may be compared with extensions of the Beta integral in
\cite[Th.~5.1]{MR2281163} and \cite{MR2090496}). %

Now, the Euler\,--\,Lagrange conditions \eqref{eq:eulerlagrange} and the
continuity of $U^{\mu_{\mathrm{eq}}}$ give that this quantity is constant on $B_1$. Finally
the value of the constant can be obtained from \eqref{eq:riesz}.
\hfill\qedsymbol

\subsection{Proof of Theorem \ref{th:d=s+3}}
\label{se:th:d=s+3}

We split the proof into several subsections.

\subsubsection{Computation of critical radius and candidate
  equilibrium measure}

From the uniqueness property we know that the equilibrium measure $\mu_{\mathrm{eq}}$ is
radially symmetric. Let us make a succession of assumptions to extract a
candidate for $\mu_{\mathrm{eq}}$, and we will then check that it indeed satisfies the
Euler\,--\,Lagrange conditions \eqref{eq:eulerlagrange}. We start by observing
from \eqref{eq:eulerlagrange} that, for $x\in S_*:=\mathrm{supp}(\mu_{\mathrm{eq}})$,
\begin{equation}\label{eq:ELK}
  \int K_s(x-y)\mu_{\mathrm{eq}}(\mathrm{d}  y)+\gamma|x|^2=c.
\end{equation}
Applying the Laplacian operator to \eqref{eq:ELK} and assuming it can be taken
inside the integral, we get from \eqref{eq:ELK} and \eqref{eq:DeltaK} that for
all $x$ in the interior of $S_*$,
\begin{equation}\label{eq:mustar3deuler}
  -c_{d,s}\int K_{s+2}(x-y)\mu_{\mathrm{eq}}(\mathrm{d}  y)+2\gamma d=0.
\end{equation}
In our case $s=d-3$, so $c_{d,s} = c_{d,d-3}$ is equal to $|d-3|$ if $d\neq3$
while it is equal to $1$ if $d=3$.

Next suppose that $S_* = B_R$ for some $R>0$. Let $\nu_R$ be the equilibrium
measure for the minimum energy problem on $B_R$ with kernel
$K_{s+2}=\left|\cdot\right|^{-(s+2)}$ and $V = 0$. Observing that $\nu_R$ is
the dilation by a factor of $R$ of $\nu_1$, we see from Theorem \ref{th:riesz}
that $\nu_R$ is the ``radial arcsine distribution''; in other words, the
measure
\begin{equation}\label{eq:nur}
  \nu_R(\mathrm{d} x)  =
  \frac{C_{d,R}}{\sqrt{R^2-|x|^2}}\mathbf{1}_{|x|\leq R} \; \mathrm{d} x,
  \quad\text{where}\quad
  C_{d,R}=\frac{2\Gamma\bigr(\frac{d+1}{2}\bigr)}{|S_1|\sqrt{\pi}\Gamma\bigr(\frac{d}{2}\bigr)R^{d-1}}
  =\frac{\Gamma\bigr(\frac{d+1}{2}\bigr)}{R^{d-1}\pi^{\frac{d+1}{2}}}.
\end{equation}
In particular, the support of $\nu_R$ is all of $B_R$. Next, by definition of
$\nu_R$, the associated Euler\,--\,Lagrange conditions state that, for some
constant $W_R$,
\begin{equation}\label{eq:nurwr}
  \int K_{s+2}(x-y)\nu_R(\mathrm{d}  x) = W_R,
  \quad \text{for } y\in B_R.
\end{equation}
As $d = s+3$ we obtain (using $y = 0 \in B_R$)
\begin{equation}\label{eq:WR}
  W_R = \frac{W_1}{R^{s+2}} = \frac{W_1}{R^{d-1}}
  \quad\text{and}\quad
  W_1=C_{d,1}|S_1|\int_0^1\frac{r^{-(s+2)}r^{d-1}}{\sqrt{1-r^2}}\,\mathrm{d}r
  =\sqrt{\pi}\frac{\Gamma\bigr(\frac{d+1}{2}\bigr)}{\Gamma\bigr(\frac{d}{2}\bigr)}.
\end{equation}
To derive the value of $R$ we first integrate \eqref{eq:mustar3deuler} with respect to $\nu_R(\mathrm{d} x)$ and
swap the integrals, assuming that this is legal, giving
\begin{equation}\label{eq:nuralpha}
  -c_{d,s}\int\Bigr(\int K_{s+2}(x-y)\nu_R(\mathrm{d}  x)\Bigr)\mu_{\mathrm{eq}}(\mathrm{d}  y)
  +2\gamma d=0.
\end{equation}
Then, using \eqref{eq:nurwr} and \eqref{eq:WR} in \eqref{eq:nuralpha}, we get
\[
  \frac{c_{d,s}\sqrt{\pi}\Gamma\bigr(\frac{d+1}{2}\bigr)}{\Gamma\bigr(\frac{d}{2}\bigr)}
  R^{-s-2}
  =2\gamma d.
\]
Finally, from the formula $z\Gamma(z)=\Gamma(z+1)$ with $z=d/2$,
we derive the desired formula for $R$, namely
\begin{equation}\label{eq:R}
  R = \left( \frac{c_{d,d-3}\sqrt{\pi} \Gamma(\frac{d+1}{2})} {4\gamma\Gamma(\frac{d+2}{2})}\right)^{\frac{1}{d-1}} =
  \left( \frac{c_{s+3,s}\sqrt{\pi} \Gamma(\frac{s+4}{2})} {4\gamma\Gamma(\frac{s+5}{2})}\right)^{\frac{1}{s+2}}.
\end{equation}
See also Remark \ref{rm:invlap} for an alternative way to compute this
critical value of $R$.

\subsubsection{Euler\,--\,Lagrange characterization}

The probability measure $\nu_R$ in \eqref{eq:nur} with $R$ as in \eqref{eq:R}
satisfies the Frostman conditions \eqref{eq:eulerlagrange} with kernel $K_s$
and $V=\gamma\left|\cdot\right|^2$ thanks to Lemma \ref{le:Phi} below.

\begin{lemma}[Potentials]\label{le:Phi}
  Let $R$ be as in \eqref{eq:R} and define
  $\Phi := K_s*\nu_R+\gamma\left|\cdot\right|^2$. Then,
  \begin{itemize}
  \item $\Phi$ is continuous on $\mathbb{R}^d$;
  \item $\Phi=\Phi(0)$ on $B_R$;
  \item $\Phi\geq\Phi(0)$ outside $B_R$.
  \end{itemize}
\end{lemma}

\begin{proof}%[Proof of Lemma \ref{le:Phi}]
  As %First of all note that
  $K_s*\nu_R$ is radially symmetric, so is $\Phi$. Thus we define for any
  $\lambda \geq 0$,
  \begin{equation} \label{Eq:Phidef}
    \varphi(\lambda) := \Phi\left(\lambda R \widehat{x}\right)
     \quad\mbox{ for any } \widehat{x}\in\mathbb{R}^d \mbox{ with } |\widehat{x}| = 1.
  \end{equation}
  Using from Lemma \ref{le:rieszpot} that $K_s*\nu_R\in L^1_{\mathrm{loc}}(\mathbb{R}^d,\mathrm{d}x)$ and we can swap the Laplacian and the Riesz potential, we get
  \begin{equation}\label{eq:DeltaPhi}
    \Delta\Phi
    =-c_{s+3,s}K_{s+2}*\nu_R+2d\gamma.
  \end{equation}
  Moreover, $\nu_R$ and the radius $R$ have been chosen in the preceding
  subsection precisely in such a way that on
  $\mathrm{int}(B_R):=\{x\in\mathbb{R}^d:|x|<R\}$,
  \begin{equation}\label{eq:DeltaPhi0}
    \Delta\Phi
    =-c_{s+3,s}K_{s+2}*\nu_R+2d\gamma
    =0.
  \end{equation}

  \emph{Continuity of $\Phi$.} At this step, let us remark that when $d<6$,
  Lemma \ref{le:rieszpot} \emph{(iv)} gives that $K_s*\nu_R$ is continuous on
  $\mathbb{R}^d$ since $\frac{\mathrm{d}\nu_R}{\mathrm{d} x}\in L^p(\mathbb{R}^d,\mathrm{d} x)$,
  $2>p>d/(d-s)=d/3$ (recall that $s=d-3$).

  Actually $K_s*\nu_R$ is continuous on $\mathbb{R}^d$ for arbitrary dimension
  $d$. Indeed, this can be checked directly, in the case $(d,s)=(2,-1)$, since
  $K_{-1}$ is not singular. This can also be checked directly in the case
  $(d,s)=(3,0)$ from the formula provided by Lemma \ref{le:varphi}. Finally,
  in the case $s=d-3>0$, using Lemma \ref{le:varphi} and Lemma
  \ref{le:landend} and the change of variable $r=\sin(\theta)$ to remove the
  singularity at the edge $r=1$, we get, for $x\in\mathbb{R}^d$, with
  $\lambda=|x|/R$, %
  for some constant $C_s>0$,
  \begin{equation}\label{eq:Ksnucont}
    (K_s*\nu_R)(x)=C_s\int_0^{\frac{\pi}{2}}
    {}_2F_1\Bigr(
    \frac{s}{2}+1,
    \frac{s}{2};
    s+2;
    \frac{4\lambda\sin(\theta)}{(\lambda+\sin(\theta))^2}
    \Bigr)
    \frac{\sin(\theta)^{s+2}}{(\lambda+\sin(\theta))^s}\mathrm{d}\theta.
  \end{equation}
  The continuity of $K_s*\nu_R$ follows then from the uniform continuity of
  the hypergeometric function. Indeed, by \eqref{eq:2F1z1} the series that
  defines ${}_2F_1(a,b;c;z)$ converges absolutely for $z\in[0,1]$ (and
  remarkably for $z=1$) when $c-a-b>0$ as in our case $c-a-b=1$. The
  hypergeometric function
  ${}_2F_1\left(\frac{s}{2}+1,\frac{s}{2};s+2;z\right)$ is uniformly
  continuous on $[0, 1]$ since it is clearly analytic on $[0, 1)$ and it is
  also continuous at $z = 1$; the latter assertion follows from Abel's Limit
  Theorem~\cite[Sec.~2.5]{MR510197} and the fact that
  ${}_2F_1\left(\frac{s}{2}+1,\frac{s}{2};s+2;1\right)$ is finite. Furthermore
  $\lambda=0$ is not a problem as soon as we establish the fact that $\Phi$ is
  harmonic (in fact constant) in the unit disk (see below!).

  \emph{Constantness on $B_R$.} It follows from Lemma~\ref{le:liouville}.

  \emph{Behavior outside $B_R$.} Since $\varphi$ defined in \eqref{Eq:Phidef}
  is continuous on $[0,+\infty)$ and differentiable on $(1,+\infty)$, the
  Frostman condition outside $B_R$ is realized if we show that
  $\varphi'(\lambda)\geq0$ for $\lambda>1$.

  Let us first consider the case $(d,s)=(2,-1)$. By Lemma
  \ref{le:varphi:d2s-1}, $\varphi$ is convex on $[1,+\infty)$ since
    \begin{equation}\label{eq:d=2s=-1phipp}
      \varphi''(\lambda)
      =
      R\frac{\sqrt{1-\frac{1}{\lambda ^2}}}{2 \lambda
      }-R\frac{\arcsin\left(\frac{1}{\lambda }\right)}{2}+2\gamma R^2
      \geq 0-\frac{\pi}{8\gamma}\frac{\frac{\pi}{2}}{2}+2\frac{\pi^2}{64\gamma}
      =0,\quad \lambda>1.
    \end{equation}
    Moreover since \eqref{eq:d=2s=-1phip3} gives
    $\lim_{\lambda\to1^+}\varphi'(\lambda)=-\frac{\pi}{16\gamma}\frac{\pi}{2}+2\frac{\pi^2}{64\gamma}=0$,
    it follows that $\varphi'(\lambda)\geq0$ when $\lambda>1$.

    Finally, consider the case $(d,s)=(3,0)$. By Lemma
    \ref{le:varphi:d3s0}, if $\lambda>1$,
    \begin{equation}%\label{eq:3dphip}
      \varphi'(\lambda)
      =\frac{2\sqrt{\lambda^2-1}^3}{3\gamma\lambda^2}\geq0.
    \end{equation}

    Let us now consider the case $s=d-3>0$. We can rewrite \eqref{eq:Ksnucont}
    as
    \begin{equation}
      \varphi(\lambda)
      =C_s\int_0^1 h(\lambda, r) \; \frac{r^{d-1}}{\sqrt{1 - r^2}} \;\mathrm{d} r ,
      \label{eq:pot1}
    \end{equation}
    where
    \begin{align}
      h(\lambda, r)
      & :=
        {}_2F_1\left(\frac{s}{2}+1,\frac{s}{2};s+2;
        \frac{4\lambda r}{(\lambda + r)^2}\right)
        \left(\lambda + r\right)^{-s} \label{eq:h1}\\
      & =
        {}_2F_1\left(\frac{s}{4}, \frac{s+2}{4}; \frac{s+3}{2};
        \frac{4\lambda^2 r^2}{(\lambda^2 + r^2)^2}\right)
        \left(\lambda^2 + r^2\right)^{-\frac{s}{2}} \label{eq:h2}
    \end{align}
    where the last equality comes from the quadratic transformation
    \eqref{eq:2F1Q}. Differentiating \eqref{eq:h1} with
    \[
      z = \frac{4\lambda r}{(\lambda + r)^2}, \qquad
      \frac{\partial z}{\partial \lambda} = \frac{4 r (r - \lambda)}{(\lambda + r)^3}
    \]
    and using the derivative formula \eqref{eq:2F1deriv} for ${}_2F_1$ we get
    \begin{multline}
      \frac{\partial h}{\partial \lambda}(\lambda, r)
      =  s r (r-\lambda) (\lambda + r)^{-s-3}
      \ {}_2F_1\left(\frac{s}{2}+2,\frac{s}{2}+1; s+3;
        \frac{4\lambda r}{(\lambda + r)^2}\right) \\
      - s (\lambda + r)^{-s-1}
      \ {}_2F_1\left(\frac{s}{2}+1,\frac{s}{2}; s+2;
        \frac{4\lambda r}{(\lambda + r)^2}\right) . \label{eq:Dh}
    \end{multline}
    The only potential difficulties are when the argument of the
    hypergeometric functions is $z = 1$. As before $z\in [0, 1]$ and
    $z = 1 \Longleftrightarrow \lambda = r$. The parameters in the first
    hypergeometric function in \eqref{eq:Dh} satisfy $c - a - b = 0$, so by
    the property \eqref{eq:2F1z1a} of ${}_2F_1$, we get
    \[
      \lim_{\lambda\to r} (r - \lambda) \  {}_2F_1\left(\frac{s}{2}+2,\frac{s}{2}+1;s+3;
        \frac{4\lambda r}{(\lambda + r)^2}\right) = 0.
    \]
    As before, the second hypergeometric function in \eqref{eq:Dh} has
    parameters which satisfy $c - a - b = 1 >0$. Thus
    $\frac{\partial h}{\partial \lambda}(\lambda, r)$ is uniformly continuous
    for $r\in [0, 1]$ and $\lambda \geq 0$, so by the Leibniz integral rule
    \[
      \varphi'(\lambda) =
      C_s\int_0^{\frac{\pi}{2}} \frac{\partial h}{\partial \lambda}(\lambda, \sin(\theta)) \;
      \sin(\theta)^{d-1} \; \mathrm{d} \theta
    \]
    and $\varphi'(\lambda)$ is continuous for $\lambda \geq 0$. In particular
    \[
      \lim_{\lambda\to 1^+} \varphi'(\lambda) = \varphi'(1).
    \]
    Let us show now that $\varphi'(\lambda)\geq0$ for $\lambda\geq0$.

    Since $s+2=d-1>d-2$, the function $K_{s+2}*\nu_R$ is subharmonic outside
    the support $B_R$ of $\nu_R$, see for instance
    \cite[Th.~I.1.4~p.~66]{MR0350027}. Since it is continuous everywhere in
    $\mathbb{R}^d$ even at $\infty$, it follows by the maximum principle
    applied on the complement of $B_R$, that for $|x| \geq R$,
    \begin{equation}\label{eq:RieszSubH}
      I(\nu_R) \geq U^{\nu_R}(x) =
      \int_{\mathbb{R}^d} \frac{1}{|x - y|^{s+2}}\; \nu_R(\mathrm{d} y)
    \end{equation}
    (equality holds for $|x| = R$ by Theorem \ref{th:riesz}). It follows by
    using \eqref{eq:rieszumeq} and \eqref{eq:DeltaPhi} that
    $\Delta \Phi(x) \geq 0$ for $|x|>R$. Next, using the radial form of
    the Laplacian,
    \[
      \frac{1}{\lambda^{d-1}}\left(\lambda^{d-1} \varphi'(\lambda) \right)' \geq 0
      \quad\text{for}\quad \lambda >1.
    \]
    Thus
    \(\displaystyle \int_\rho^\lambda [\tau^{d-1} \varphi'(\tau) ]' \; \mathrm{d}
    \tau \geq 0 \text{ for }\lambda \geq \rho> 1, \) and so
    \[
      \lambda^{d-1} \varphi'(\lambda) \geq \rho^{d-1} \varphi'(\rho).
    \]
    Finally, letting $\rho \to 1^+$ we get, as $\varphi'(1) = 0$,
    \[
      \varphi'(\lambda) \geq 0 \quad\text{for}\quad \lambda \geq 1.
    \]
    We also know that $\varphi$ is constant for $0 \leq \lambda \leq 1$, so
    $\varphi'(\lambda) \geq 0$ for $\lambda \geq 0$.
\end{proof}

\begin{lemma}[Laplacian inversion or Liouville lemma]\label{le:liouville}
  Let $\Phi:\mathrm{int}(B_R)\to\mathbb{R}$, $d\geq2$, $R>0$.
  If
  \begin{itemize}
  \item (local integrability) $\Phi\in L^1_{\mathrm{loc}}(\mathrm{d}x)$;
  \item (weak harmonicity) $\Delta\Phi=c$ for a constant $c$, in the sense of
    Schwartz distributions;
  \item (radial symmetry) $\Phi$ is equal to a constant on
    the sphere $S_r$ for all $r<R$;
  \end{itemize}
  then $\Phi$ is $\mathcal{C}^\infty$ and is given by
  $\Phi=\frac{c}{2d}\left|\cdot\right|^2+\Phi(0)$.
  In particular $\Phi$ is constant when $c=0$.
\end{lemma}

Related statements can be found in \cite[Sec.~0.3]{MR1485778} for $d=2$, and
in \cite[Th.~3.3, Ch.~III, p.~183]{MR0350027}.

\begin{remark}[Extension] Lemma \ref{le:liouville} extends to the case where
  $\Delta\Phi$ is a $\mathcal{C}^\infty$ radial function on $B_R$, say
  $\Delta\Phi=A(\left|\cdot\right|)$. Indeed the same proof gives
  $\Phi=\Phi(0)+B(\left|\cdot\right|)$, where $B$ solves
  $rB''(r)+(d-1)B'(r)=A(r)$, $0<r<R$, with $B(0)=B'(0)=0$, which gives
  $B(r)=\int_0^ru^{1-d}\Bigr(\int_0^u v^{d-1}A(v)\mathrm{d} v\Bigr)\mathrm{d} u$. Thus
  \[
    B(r)=\int_0^r(k(r)-k(v))v^{d-1}A(v)\mathrm{d} v
    \quad
    \text{with}
    \quad
    k(v):=\begin{cases}\frac{v^{2-d}}{2-d}&\text{if
        $d\neq2$}\\\log(v)&\text{if $d=2$}\end{cases}.
  \]
  If $A$ is a polynomial of degree $m$, then $B$ is a polynomial of degree
  $m+2$, while if $A$ is a hypergeometric series, then $B$ is also a
  hypergeometric series. For an arbitrary integer $m\geq1$, repeating this
  procedure gives a symmetric polynomial in $d$ variables $\Phi$ such that
  $\Delta^m\Phi=c$.
  See for instance 
  \cite{GutlebCarrilloOlver2021Balls}
  and references therein for a link with Jacobi and Zernike orthogonal
  polynomials and hypergeometric functions.
\end{remark}

\begin{proof}[Proof of Lemma \ref{le:liouville}]
  By a version of the Weyl lemma expressing the Hörmander hypoellipticity of
  the Laplacian operator, see for instance Stroock's expository note
  \cite{MR2528466}, we get that $\Phi$ is
  $\mathcal{C}^\infty(\mathrm{int}(B_R))$. Next, by radial symmetry
  $\Phi(x)=\psi(r)$ where $r=|x|$. Using
  $\Delta=\partial_r^2+\frac{d-1}{r}\partial_r+\frac{1}{r^2}\Delta_{S_1}$ we
  get that
  \[ 
  c=\Delta\Phi(x)=\psi''(r)+\frac{d-1}{r}\psi'(r)=\frac{(r^{d-1}\psi'(r))'}{r^{d-1}},
  \]
  and thus $r^{d-1}\psi'(r)=\frac{c}{d}r^d$ (note that we use here the fact
  that $d>1$ to get that $r^{d-1}\psi'(r)=0$ when $r\to0$). Hence
  $\psi(r)=\frac{c}{2d}r^2+\psi(0)$. Note that
  $\Delta\Phi=c=\Delta(\frac{c}{2d}\left|\cdot\right|^2)$ gives
  $\Delta(\Phi-\frac{c}{2d}\left|\cdot\right|^2)=0$. 
  %This ends the proof of Lemma \ref{le:liouville}.
\end{proof}

\subsubsection{Completion of proof.}
To complete the proof of Theorem \ref{th:d=s+3} note that we can reinterpret
\eqref{eq:mustar3deuler} as Frostman conditions \eqref{eq:eulerlagrange}:
$\mu_{\mathrm{eq}}=\nu_R$ is seen as an equilibrium measure for kernel
$\widetilde K=K_{s+2}$ with external field $\widetilde V$ equal to $0$ on
$B_R$ and to $+\infty$ outside, connecting with Theorem \ref{th:riesz}.
\hfill\qed

\begin{remark}[Alternative motivation of the Frostman condition on
  $B_R$]\label{rm:invlap}
  Following \cite[Lemma 2.3]{GutlebCarrilloOlver2021Balls} or
  \cite[Sec.~4]{zbMATH06665921}, Riesz's formula \eqref{eq:riesz} with
  $d\geq2$, $d-2<s<d$, $R>0$ gives, using \eqref{eq:DeltaK} and
  \eqref{eq:DeltaKmu}, for $x\in\mathrm{int}(B_R)$,
  \begin{equation}\label{eq:rieszlaplacian1}
    \Delta\int_{|y|\leq R}
    \frac{\mathrm{d}y}{|x-y|^{s-2}(R^2-|y|^2)^{\frac{d-s}{2}}}
    =
    \int_{|y|\leq R}\frac{c_{d,s-2}\mathrm{d}y}{|x-y|^s(R^2-|y|^2)^{\frac{d-s}{2}}}
    =\frac{c_{d,s-2}\pi^{\frac{d}{2}+1}}{\Gamma(\frac{d}{2})\sin((d-s)\frac{\pi}{2})}.
  \end{equation}
  Now, inverting the Laplacian as in Lemma~\ref{le:liouville} and using
  Lemma~\ref{le:Phi} for continuity at the boundary, we get
  \begin{equation}\label{eq:rieszlaplacian2}
    \int_{|y|\leq R}
    \frac{\mathrm{d}y}{|x-y|^{s-2}(R^2-|y|^2)^{\frac{d-s}{2}}}
    =\frac{\pi^{\frac{d}{2}+1}}{\Gamma(\frac{d}{2})\sin((d-s)\frac{\pi}{2})}
    \Bigr(\frac{c_{d,s-2}|x|^2}{2d}+\frac{d-s}{2}R^2\Bigr).
  \end{equation}
  Replacing $s-2$ by $s$ gives, for $d\geq2$ and $d-4<s<d-2$, $R>0$, $x\in B_R$,
  \begin{equation}\label{eq:rieszgen1}
    \int_{|y|\leq R}
    \frac{\mathrm{d}y}{|x-y|^{s}(R^2-|y|^2)^{\frac{d-s}{2}-1}}
    =\frac{\pi^{\frac{d}{2}+1}}{\Gamma(\frac{d}{2})\sin((d-s)\frac{\pi}{2})}
    \Bigr(\frac{c_{d,s}}{2d}|x|^2-\frac{d-s-2}{2}R^2\Bigr).
  \end{equation}
  The left-hand side can be normalized using the fact that for $0<\beta<1$,
  \begin{equation}\label{eq:rieszgen2}
    Z_\beta:=
    \int_{|y|\leq R}
    \frac{\mathrm{d}y}{(R^2-|y|^2)^{\beta}}
    =|S_1|R^{d-2\beta}\int_0^1\frac{r^{d-1}\mathrm{d} r}{(1-r^2)^\beta}
    =\frac{R^{d-2\beta}\pi^{\frac{d}{2}}\Gamma(1-\beta)}{\Gamma(1-\beta+\frac{d}{2})}.
  \end{equation}
  Indeed, with $d-s=3$ and $\beta=\frac{d-s}{2}-1=\frac{1}{2}$, we get
  $Z_{\frac{1}{2}}=\frac{R^{s+2}\pi^{\frac{s+4}{2}}}{\Gamma(\frac{s+4}{2})}$,
  and \eqref{eq:rieszgen1} gives
  \begin{equation}\label{eq:rieszgen3}
    \frac{\Gamma(\frac{s+4}{2})}{R^{s+2}\pi^{\frac{s+4}{2}}}
    \int_{|y|\leq R}
    \frac{\mathrm{d}y}{|x-y|^{s}\sqrt{R^2-|y|^2}}
    +
    \frac{\Gamma(\frac{s+4}{2})\sqrt{\pi}c_{s+3,s}}{4R^{s+2}\Gamma(\frac{s+5}{2})}
    |x|^2
    = \frac{\Gamma(\frac{s+4}{2})\sqrt{\pi}}{2R^s\Gamma(\frac{s+3}{2})}.
  \end{equation}
  When $R$ is equal to the critical value \eqref{eq:R}, the prefactor of
  $|x|^2$ in \eqref{eq:rieszgen3} is equal to $\gamma$ and
  \eqref{eq:rieszgen3} becomes the Frostman condition on $B_R$ for Theorem
  \ref{th:d=s+3}. Note also that taking $(d,s)=(3,0)$ in \eqref{eq:rieszgen3}
  is allowed but produces a trivial kernel inside the integral in the
  left-hand side. It is also possible to take $d=3$ and $s\to0$ while keeping
  $s\neq0$, and use, for $x\neq0$,
  \begin{equation}\label{eq:K0}
    \lim_{\substack{s\to0\\s\neq0}}|s|^{-1}c_{3,s}=1
    \quad\text{and}\quad
    \lim_{\substack{s\to0\\s\neq0}}\Bigr(\frac{1}{s|x|^s}-\frac{1}{s}\Bigr)
    =\lim_{s\to0}\frac{|x|^{-s}-1}{s-0}=-\log|x|
  \end{equation}
  to recover the logarithmic kernel in this case. In another direction, note
  also that repeating the process that we used to get \eqref{eq:rieszgen1} to
  reach higher powers provides a family of generalizations of
  \eqref{eq:rieszgen1} involving Jacobi polynomials in the right-hand side,
  and even more generally hypergeometric ${}_2F_1$ functions, see for instance
  \cite{GutlebCarrilloOlver2021Balls,zbMATH06665921,zbMATH06324458}, producing
  potential extensions of Theorem \ref{th:d=s+3}.
\end{remark}

\begin{remark}[Hypergeometric formulas outside $B_R$]\label{rm:outside}
  As pointed out by the referee we can give hypergeometric formulas for the
  integrals \eqref{eq:co:riesz} and \eqref{eq:co:formula:sin} when
  $\lambda = |x|/R \geq 1$.
  More precisely, for $d\in\{2,3,4,\ldots\}$, $d-2<s<d$, $\lambda\geq1$,
  \begin{align}\label{eq:riesz:rhs}
    \int_{\mathbb{R}^d}
    \frac{|x-y|^{-s}}{(R^2-|y|^2)^{\frac{d-s}{2}}}\mathbf{1}_{|y|\leq R}\mathrm{d} y
    & = \frac{1}{\lambda^s}
    \frac{\pi^\frac{d}{2} \Gamma(\frac{s-d+2}{2})}{\Gamma(\frac{s+2}{2})}
   {}_2F_1\Bigr(\frac{s}{2},\frac{s-d+2}{2};\frac{s+2}{2};\frac{1}{\lambda^2}
   \Bigr)
  \end{align}
  and, using the function $\mathcal{S}_{d-3}$ defined in \eqref{eq:co:formula:sinS},
  \begin{equation}\label{eq:d=s+3:rhs}
    \int_0^1\mathcal{S}_{d-3}\Bigr(\frac{4\lambda r}{(\lambda+r)^2}\Bigr)
    \frac{(\lambda+r)^{3-d}r^{d-1}\mathrm{d} r}{\sqrt{1-r^2}}
    =
    \frac{1}{2\lambda^{d-3}}
    \frac{\Gamma(\frac{d-1}{2})^2\Gamma(\frac{d}{2})\Gamma(\frac{3}{2})}{\Gamma(d-1)\Gamma(\frac{d+1}{2})}
    {}_2F_1\Bigr(\frac{d-3}{2},-\frac{1}{2};\frac{d+1}{2};\frac{1}{\lambda^2}
    \Bigr).
  \end{equation}
  Indeed, to get \eqref{eq:riesz:rhs}, we start from the integral in the
  left hand side of \eqref{eq:co:riesz} by using, for $0\leq r<\lambda$,
  \begin{equation}
    {}_2F_1\Bigr(\frac{s}{4},\frac{s+2}{4};\frac{d}{2};
    \frac{4\lambda^2r^2}{(\lambda^2+r^2)^2}\Bigr)
    =\frac{(\lambda^2+r^2)^{\frac{s}{2}}}{\lambda^s}
    {}_2F_1\Bigr(\frac{s}{2},\frac{s-d+2}{2};\frac{d}{2};\frac{r^2}{\lambda^2}\Bigr),
  \end{equation}
  which comes from the quadratic transformation \eqref{eq:quad2F1} with
  $z = r^2/\lambda^2 < 1$. With this replacement the left-hand side of
  \eqref{eq:co:riesz} takes the form of the following Euler beta integral
  formula (see \cite{zbMATH02006424} and \cite[eq.\ (2.2.2)]{zbMATH01231230})
  \begin{equation}\label{eq:beta:alt}
    \int_0^\infty
    u^{\alpha-1}(1-u)^{\gamma-\alpha-1}
    {}_2F_1(a_1,a_2;b_1;tu)\mathrm{d} u
    =\frac{\Gamma(\alpha)\Gamma(\gamma-\alpha)}{\Gamma(\gamma)}
    {}_3F_2(a_1,a_2;\alpha;b_1;\gamma;t),
  \end{equation}
  which leads to \eqref{eq:riesz:rhs} after cancellation of upper and lower
  ${}_3F_2$ parameters. Similarly, to get \eqref{eq:d=s+3:rhs}, we start from
  the integral on the left-hand side of \eqref{eq:co:formula:sin} by using,
  for $0\leq r<\lambda$, the relation\footnote{The relation \eqref{E:quad2}
    can also be derived using Erdélyi~\cite[3.3(13)]{MR0058756} which is
    Whipple's relation between the Legendre functions of the first and second
    kinds, rewritten in terms of ${}_2F_1$. Both the first-kind $P_\nu^\mu$
    and the second-kind $Q_\nu^\mu$ have ${}_2F_1$ representations; see
    \cite[14.3.6 and 14.3.7]{NIST:DLMF}.}
  \begin{equation} \label{E:quad2}
    {}_2F_1\Bigr(\frac{s}{2},\frac{s+2}{2};s+2;\frac{4\lambda
      r}{(\lambda+r)^2}\Bigr)%
    =\frac{(\lambda+r)^s}{\lambda^s}
    {}_2F_1\Bigr(\frac{s}{2},-\frac{1}{2};\frac{s+3}{2};\frac{r^2}{\lambda^2}\Bigr),
  \end{equation}
  which comes from the quadratic transformation \eqref{eq:quad2F14E} with
  $z = r/\lambda < 1$. This leads to \eqref{eq:d=s+3:rhs} via
  \eqref{eq:beta:alt}.
\end{remark}

\subsection{Proof of Corollary \ref{co:formulas}}
\label{se:co:formulas}

The function $\varphi$ defined in \eqref{Eq:Phidef} is continuous on $[0,1]$
and differentiable on $(0,1)$. From the proof of Theorem \ref{th:d=s+3}, the
Frostman condition states that $\varphi$ is constant and equal to $\varphi(0)$
on $\lambda\in[0,1]$, namely $\varphi'(\lambda)=0$ for $\lambda\in(0,1)$. Now
the formula \eqref{eq:co:formula:sin} in Corollary \ref{co:formulas} comes
from the combination of equation $\varphi(\lambda)=\varphi(0)$ together with
the formulas for $\varphi$ provided by Lemmas \ref{le:varphi} and
\ref{le:landend} of Appendix~\ref{se:key}. Note that \eqref{eq:co:formula:sin}
is trivial when $d=3$. The formula \eqref{eq:co:formula:sinF} is obtained from
\eqref{eq:co:formula:sin} by using \eqref{eq:co:formula:sinS} and the Legendre
duplication formula \eqref{eq:duplication}.%

\subsection{Proof of Corollary \ref{co:formulas:more}}
\label{se:co:formulas:more}

Let us keep the notation used in Appendix \ref{se:key}. First of all, the
formulas (\ref{eq:co:formulas:log}--\ref{eq:co:formulas:log'}) in Corollary
\ref{co:formulas:more} come from the Frostman condition
$\varphi(\lambda)=\varphi(0)$ and its reformulation $\varphi'(\lambda)=0$, and
the formula for $\varphi$ provided by Lemma \ref{le:varphi}.

The formula \eqref{eq:co:formulas:E} in Corollary \ref{co:formulas:more} is
obtained by further reformulating $\varphi$ when $(d,s)=(2,-1)$ in terms of
special functions using Lemma \ref{le:varphi:d2s-1} below. The formula for
$\varphi'$ provided by this lemma gives
\begin{equation}\label{eq:co:formulas:EK}
  \int_0^1
  \left[
    (\lambda+r)
    E\left(\frac{4\lambda r}{(r+\lambda)^2}\right)
    +
    (\lambda-r)
    K\left(\frac{4\lambda r}{(r+\lambda)^2}\right)
  \right]
  \frac{r\,\mathrm{d}r}{\sqrt{1-r^2}}
  =\frac{\pi^2}{4}\lambda^2.
\end{equation}
Next, following
\cite{doi:10.1098/rstl.1775.0028,doi:10.1098/rstl.1771.0037,MR966232}, the
Landen transform for $E$ and $K$ gives, for $z\in[-1,1]$,
\begin{equation}\label{eq:landenEK}
  K\Bigr(\frac{4z}{(1+z)^2}\Bigr)=(1+z)K(z^2)
  \quad\text{and}\quad
  E\Bigr(\frac{4z}{(1+z)^2}\Bigr)=\frac{2}{1+z}E(z^2)-(1-z)K(z^2).
\end{equation}
Now, the formula \eqref{eq:co:formulas:K} of Corollary \ref{co:formulas} comes
by combining \eqref{eq:co:formulas:E} and \eqref{eq:co:formulas:EK} with
$z=\frac{r}{\lambda}$.

\appendix

\section{Useful tools}

\label{appA}

Let us recall the Euler reflection formula for the Gamma function, valid for $z\not\in\{-1,-2,\ldots\}$,
\begin{equation}\label{eq:reflection}
  \Gamma(z)\Gamma(1-z)=\frac{\pi}{\sin(\pi z)},
\end{equation}
and the Legendre duplication formula, valid for
$2z\not\in\{-0,-1,-2,-3,\ldots\}$,
\begin{equation}\label{eq:duplication}
  \sqrt{\pi}\Gamma(2z)=2^{2z-1}\Gamma(z)\Gamma\Bigr(z+\frac{1}{2}\Bigr).
\end{equation}

\subsection{Hypergeometric Identities}
\label{appA.1}

\begin{itemize}
\item The hypergeometric function ${}_2F_1$ can be written as (see \cite[(15.2(i))]{NIST:DLMF})
  \begin{equation}\label{eq:2F1def}
    {}_2F_1\left(a, b; c; z\right) :=
    \frac{\Gamma(c)}{\Gamma(a)\Gamma(b)}
    \sum_{k=0}^\infty \frac{\Gamma(a+k) \Gamma(b+k)}{\Gamma(c+k)} \; \frac{z^k}{k!}.
  \end{equation}
\item If $\Re(c - a - b) > 0$ then \eqref{eq:2F1def} converges
  absolutely for $|z| \leq 1$ and (see \cite[(15.4.20)]{NIST:DLMF})
  \begin{equation}\label{eq:2F1z1}
    {}_2F_1\left(a, b; c; 1\right)  =
    \frac{\Gamma(c) \Gamma(c-a-b)}{\Gamma(c-a)\Gamma(c-b)} .
  \end{equation}
\item If $c = a + b$ then (\cite[(15.4.21)]{NIST:DLMF}):
  \begin{equation}\label{eq:2F1z1a}
    \lim_{z\to 1^-} \frac{{}_2F_1\left(a, b; a+b; z\right)}{-\log(1 - z)}  =
    \frac{\Gamma(a+b)}{\Gamma(a)\Gamma(b)} .
  \end{equation}
\item Quadratic transformation (see \cite[(15.8.13)]{NIST:DLMF} %
  or \cite[2.11(4)]{MR0058756})%
  : if $|\mathrm{phase}(1-z)| < \pi$ then
  \begin{equation} \label{eq:2F1Q}
    {}_2F_1\left(\frac{a}{2}, \frac{1}{2} + \frac{a}{2}; \frac{1}{2} + b; \frac{z^2}{(2-z)^2}\right) = \left(1 - \frac{z}{2}\right)^{a}\,{}_2F_1\left(a, b; 2b; z\right).
  \end{equation}
\item Quadratic transformation (\cite[2.11(34)]{MR0058756}): if $0\leq z\leq1$
  then
    \begin{equation}\label{eq:quad2F1}
      {}_2F_1\Bigr(\frac{a}{2},\frac{a+1}{2};a-b+1;\frac{4z}{(1+z)^2}\Bigr)
      =(1+z)^{a}\,{}_2F_1(a,b;a-b+1;z)
    \end{equation}
    (there is a typo in \cite[2.11(34)]{MR0058756}: $a-b-1$ has been corrected
    here to $a-b+1$).%
  \item Quadratic transformation (see \cite[2.11(5)]{MR0058756}): if
    $0\leq z\leq1$ then
  \begin{equation}\label{eq:quad2F14E}
    {}_2F_1\Bigr(a,b;2b;\frac{4z}{(1+z)^2}\Bigr)
    =(1+z)^{2a}\,{}_2F_1\Bigr(a,a+\frac{1}{2}-b;b+\frac{1}{2};z^2\Bigr).
  \end{equation}
\item Derivative formula (see \cite[(15.5.1)]{NIST:DLMF}):
  \begin{equation}\label{eq:2F1deriv}
    \frac{\mathrm{d}}{\mathrm{d} z} {}_2F_1\left(a, b; c; z\right) =
    \left(\frac{ab}{c}\right) {}_2F_1\left(a+1, b+1; c+1; z\right).
  \end{equation}
\item Euler integral formula
    (see \cite[p.~4-5]{MR0185155} and \cite[(15.6.1)]{NIST:DLMF}):
    \begin{equation}\label{eq:2F1Int}
      {}_2F_1\left(a, b; c; z\right) = \frac{\Gamma(c)}{\Gamma(b)\Gamma(c-b)}
      \int_0^1 \frac{u^{b-1} (1 - u)^{c-b-1}}{(1 - z u)^a} \mathrm{d} u,
    \end{equation}
    provided that $\Re(b)>0$, $\Re(c) > 0$, and $|\mbox{phase}(1-z)| < \pi$.
\end{itemize}

\subsection{Funk\,--\,Hecke formula}
\label{appA.2}

Let $d\geq2$ and $\sigma_{S_1}$ denote the uniform probability measure on the unit
centered sphere $S_1=\{x\in\mathbb{R}^d:|x|=1\}$. Then, for $z\in\mathbb{R}^d$
with $|z|=1$,
\begin{equation}\label{eq:funkhecke}
  \int_{S_1}
  f(z\cdot x)\sigma_{S_1}(\mathrm{d} x)
  =\tau_{d-1}\int_0^\pi
  f(\cos(\theta))\sin^{d-2}(\theta)\mathrm{d}\theta
  =\tau_{d-1}\int_{-1}^1f(t)(1-t^2)^{\frac{d-3}{2}}\mathrm{d} t
\end{equation}
where
\begin{equation}\label{eq:taud-1}
  \tau_{d-1}
  :=\left(\int_0^\pi\sin^{d-2}(\theta)\mathrm{d}\theta\right)^{-1}
  =\left(\int_{-1}^1(1-t^2)^{\frac{d-3}{2}}\mathrm{d} t\right)^{-1}
  =\frac{\Gamma(\frac{d}{2})}{\Gamma(\frac{1}{2})\Gamma(\frac{d-1}{2})}.
\end{equation}

The Funk\,--\,Hecke formula \eqref{eq:funkhecke} is a useful tool to reduce
multivariate integrals into univariate integrals. It gives the projection on
any diameter of the uniform law on the sphere. If $X$ is a random vector in
$\mathbb{R}^d$ uniformly distributed on $S_1$ then for $z\in S_1$, the law of
$z\cdot X$ has density
$\tau_{d-1}(1-t^2)^{\frac{d-3}{2}}\mathbf{1}_{t\in[-1,1]}$. This is an arcsine
law when $d=2$, a uniform law when $d=3$, a semicircle law when $d=4$, and
more generally, for an arbitrary $d\geq2$, the image by the map
$u\mapsto\sqrt{u}$ of the beta law $\mathrm{Beta}(\frac{1}{2},\frac{d-1}{2})$.
We refer to \cite[p,~18]{MR0199449} or \cite[Eq.~(5.1.9)~p.~197]{MR3970999}
for a proof.

\subsection{Euler\,--\,Lagrange characterization of equilibrium measure (Frostman conditions)}
\label{appA.3}

For $\mu\in\mathcal{M}_1$ such that
$K_s(x)\mathbf{1}_{|x|>1}\mathbf{1}_{s\leq0}\in L^1(\mu)$, we define the
$s$-Riesz potential at point $x\in\mathbb{R}^d$ by
\begin{equation}\label{eq:Umu}
  U^\mu(x):=(K_s*\mu)(x)=\int K_s(x-y)\mu(\mathrm{d}y)\in(-\infty,+\infty].
\end{equation}
The Euler\,--\,Lagrange characterization of the equilibrium measure $\mu_{\mathrm{eq}}$,
also known as \emph{Frostman conditions} in potential theory, states that a
necessary and sufficient condition for such an element $\mu$ of
$\mathcal{M}_1$ to be an equilibrium measure is that for some finite constant
$c$ we have (see, for example, \cite{MR0350027})
\begin{equation}\label{eq:eulerlagrange}
  U^{\mu} + V
  \begin{cases}
   \leq c & \text{ on the support of $\mu$} \\
   \geq c & \text{ quasi-everywhere on $\mathbb{R}^d$}
  \end{cases} ;
\end{equation}
by ``quasi-everywhere'' we mean except on a set for which every probability
measure supported on it has infinite energy. This condition holds everywhere
when $V$ is continuous. It is customary to say that $c$ is the \emph{modified
  Robin constant} and we have
$c=\int U^{\mu_{\mathrm{eq}}}\mathrm{d} \mu_{\mathrm{eq}}+\int V\mathrm{d} \mu_{\mathrm{eq}}=\mathrm{I}(\mu_{\mathrm{eq}})-\int V\mathrm{d} \mu_{\mathrm{eq}}$.

\subsection{Integrability and regularity of Riesz potentials}
\label{appA.4}

The following Lemma summarizes key regularity properties of the Riesz kernel,
some of which are classical. We give a proof for the reader's convenience. On
this topic, we also refer to the works of Mizuta such as
\cite{MR1018814,MR1211771,MR1428685}.

\begin{lemma}[Integrability and regularity of Riesz potentials]
\label{le:rieszpot} 
  \begin{enumerate}[(iii)]
  \item[(i)] $K_s\in L^1_{\mathrm{loc}}(\mathbb{R}^d,\mathrm{d} x)$
    if and only if $s=0$ or $s\neq0$ and $s<d$.
  \item[(ii)] If $s<d-2$ then, in the sense of distributions, and in the sense of
    functions on $\{x\in\mathbb{R}^d:x\neq0\}$,
    \begin{equation}\label{eq:DeltaK}
      \Delta K_s=-c_{d,s}K_{s+2}
      \quad\text{where}\quad
      c_{d,s}:=
      \begin{cases}
        |s|(d-2-s)&\text{if $s\neq0$}\\
        d-2&\text{if $s=0$}
      \end{cases}.
    \end{equation}
  \item[(iii)] Suppose that $s=0$ or $s\neq0$ and $s<d$. Let $\mu$ be a compactly
    supported probability measure on $\mathbb{R}^d$. Then the following
    function is well defined and belongs to
    $L^1_{\mathrm{loc}}(\mathbb{R}^d,\mathrm{d} x)$:
    \begin{equation}\label{eq:Kmu}
      x\in\mathbb{R}^d\mapsto
      (K_s*\mu)(x):=\int K_s(x-y)\mu(\mathrm{d} y).
    \end{equation}
    Moreover, in the sense of distributions,
    \begin{equation}\label{eq:DeltaKmu}
      \Delta (K_s*\mu)=(\Delta K_s)*\mu=-c_{d,s}K_{s+2}*\mu.
    \end{equation}
  \item[(iv)] Suppose that $s=0$ or $s\neq0$ and $s<d$. If $\mu$ is a compactly
    supported probability measure on $\mathbb{R}^d$ such that
    $\mu(\mathrm{d} x)=f(x)\mathrm{d} x$, $f\in L^p(\mathbb{R}^d,\mathrm{d} x)$, and $p>d/(d-s)$,
    then $K_s*\mu$ is continuous on $\mathbb{R}^d$.
  \end{enumerate}
\end{lemma}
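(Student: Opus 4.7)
I would attack parts \emph{(i)--(iv)} in order, using (i) as the local-integrability input for (ii) and (iii), and a standard splitting-plus-H\"older argument for (iv). Part (i) is a direct polar-coordinate computation: for $s\neq 0$, $\int_{B_1}|x|^{-s}\dd x$ equals a positive constant times $\int_0^1 r^{d-1-s}\dd r$, finite iff $s<d$; for $-2<s<0$ the kernel is bounded near $0$; for $s=0$, $\int_{B_1}|\log|x||\dd x$ is manifestly finite. Since $K_s$ is locally bounded on $\dR^d\setminus\{0\}$, the asserted equivalence follows.

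For part (ii), a direct computation on $\dR^d\setminus\{0\}$ gives $\Delta|x|^{-s}=-s(d-s-2)|x|^{-s-2}$ and $\Delta(-\log|x|)=-(d-2)|x|^{-2}$, both of which match $-c_{d,s}K_{s+2}$ pointwise. To promote this to a distributional identity I would test against $\varphi\in C_c^\infty(\dR^d)$ and apply Green's identity on $\dR^d\setminus\overline{B_\varepsilon}$:
\[
\int_{|x|>\varepsilon}K_s\,\Delta\varphi\,\dd x
=\int_{|x|>\varepsilon}(\Delta K_s)\,\varphi\,\dd x
+\int_{\partial B_\varepsilon}\!\left(K_s\partial_\nu\varphi-\varphi\partial_\nu K_s\right)\dd\sigma.
\]
The surface contributions scale as $\varepsilon^{d-1-s}$ (from $K_s$) and $\varepsilon^{d-2-s}$ (from $\partial_\nu K_s\sim\varepsilon^{-s-1}$), and both vanish as $\varepsilon\to 0$ precisely because $s<d-2$; the interior integrals converge by dominated convergence, using (i) for both $K_s$ and $K_{s+2}$ (noting $s+2<d$). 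This boundary-term estimate is the main technical point of the lemma, and the hypothesis $s<d-2$ is sharp: at the borderline $s=d-2$ a Dirac mass at the origin would appear and the clean identity would break.

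For part (iii), local integrability of $K_s*\mu$ follows from Tonelli: if $\mathrm{supp}(\mu)\subset B_M$, then for any $R>0$ and $y\in B_M$, $\int_{B_R}|K_s(x-y)|\dd x\leq\int_{B_{R+M}}|K_s(z)|\dd z<\infty$ by (i), uniformly in $y$. For the Laplacian identity, test against $\varphi\in C_c^\infty$ and apply Fubini to $K_s(x-y)\Delta\varphi(x)$ on $\mathrm{supp}(\varphi)\times\mathrm{supp}(\mu)$:
\[
\langle K_s*\mu,\Delta\varphi\rangle=\int\!\left(\int K_s(x-y)\Delta\varphi(x)\,\dd x\right)\mu(\dd y)=\int\langle\Delta K_s(\cdot-y),\varphi\rangle\,\mu(\dd y);
\]
by (ii) the inner bracket equals $-c_{d,s}\int K_{s+2}(x-y)\varphi(x)\,\dd x$, and a second Fubini (valid by the same estimate with $s+2<d$) yields $-c_{d,s}\langle K_{s+2}*\mu,\varphi\rangle$.

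For part (iv), observe first that $p>d/(d-s)$ is equivalent, by the polar calculation of (i), to $K_s\in L^{p'}_{\mathrm{loc}}$ where $p'=p/(p-1)$. Fix $x_0\in\dR^d$ and $\eta>0$; for $\delta>0$ small, split
\[
(K_s*f)(x)=\int_{|y-x_0|\leq\delta}\!K_s(x-y)f(y)\,\dd y+\int_{|y-x_0|>\delta}\!K_s(x-y)f(y)\,\dd y.
\]
By H\"older's inequality, the near piece is bounded uniformly for $|x-x_0|\leq\delta$ by $\|K_s\|_{L^{p'}(B_{2\delta})}\|f\|_{L^p(B_\delta(x_0))}$, which is $\leq\eta/2$ once $\delta$ is small enough, by absolute continuity of the integral of $|f|^p$. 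The far piece is continuous at $x_0$ by dominated convergence, since for $x$ close to $x_0$ its integrand is dominated by a fixed $L^1(\dR^d)$ function of $y$ (the kernel is bounded on the relevant annulus, and $f$ is compactly supported, hence in $L^1$). Combining the two bounds yields continuity of $K_s*f$ at $x_0$.
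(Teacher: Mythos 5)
Your proposal is correct, and for parts (i)--(iii) it follows essentially the same route as the paper: a polar-coordinate check near the origin for (i); a pointwise computation away from the origin followed by Green's identity on $\{|x|>\varepsilon\}$ with boundary terms scaling as $\varepsilon^{d-1-s}$ and $\varepsilon^{d-2-s}$ for (ii); and a Fubini--Tonelli chain for (iii). The only real divergence is in part (iv): you decompose by truncating the \emph{spatial domain} near the evaluation point (near piece $|y-x_0|\le\delta$ vs. far piece), while the paper decomposes by truncating the \emph{kernel value} ($|K_s|\le n$ vs. $|K_s|\ge n$). The two schemes are two faces of the same idea, since $|K_s(x-y)|\ge n$ is exactly $|x-y|\le n^{-1/s}$ for $s>0$, and both reduce to the same H\"older estimate with conjugate exponent $q=p/(p-1)$ and the same observation that $p>d/(d-s)$ is what makes $K_s\in L^q_{\mathrm{loc}}$. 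A small advantage of your version is that it treats $s>0$, $s<0$, and $s=0$ uniformly (since $\log|\cdot|\in L^q_{\mathrm{loc}}$ for every $q<\infty$), whereas the paper handles the logarithmic case by a separate $K_0^\pm$ splitting; a small advantage of the paper's version is that it gives uniform convergence on compacta directly, which is marginally more than pointwise continuity but of course not needed for the stated claim.
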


Note that $K_{s+2}\in L^1_{\mathrm{loc}}(\mathbb{R}^d,\mathrm{d} x)$ implies $s+2<d$;
in other words $s<d-2$. Furthermore, the condition $s<d-2$ is sharp for
\eqref{eq:DeltaK}, indeed; in the sense of distributions, we have
$\Delta K_{d-2}=-c_d\delta_0$ (Coulomb kernel). This suggests defining
$K_d:=\delta_0$ to make the formula \eqref{eq:DeltaK} valid for the critical
case $s=d-2$, provided that we also set $c_{d,d-2}:=c_d$.

We remark that \eqref{eq:DeltaK} is a special case of \eqref{eq:DeltaKmu} which
corresponds to taking $\mu=\delta_0$ and that
\eqref{eq:DeltaKmu} goes beyond \cite[Eq.~(7)~p.~118]{MR0290095} and
\cite[Eq.~(85)~p.~136]{MR1790156}.
% +Mizuta's book on potential theory, Samko's book on singular integrals
Note also that the distribution $\Delta\mu$ equals the convolution
$(\Delta\delta_0)*\mu$, see \cite[end of Ch.~VI, Sec.~3; notably eq.~(VI, 3;
34--35)]{MR0209834}. From this point of view, it follows that
\eqref{eq:DeltaKmu} is a consequence of the associative law for convolution of
three distributions, two of which have compact support, see \cite[Ch.~VI,
Sec.~3, Th.~VII]{MR0209834} and \cite[Lemma 0.6]{MR0350027}. We give however a
direct short proof of \eqref{eq:DeltaKmu} below.

\begin{proof}[Proof of Lemma \ref{le:rieszpot}]
  \emph{Proof of (i).} It suffices to check local integrability in the
  neighborhood of the origin. We have
  \[
    \int_{|x|\leq1}|K_s(x)|\mathrm{d} x
    =
    \begin{cases}
      \displaystyle2\pi\int_0^1r^{-s+d-1}\mathrm{d} r<\infty
      &\text{ if $s\neq0$ and $d>s$}\\[.5em]
      \displaystyle2\pi\int_0^1\log(r)r^{d-1}\mathrm{d} r<\infty
      &\text{ if $s=0$}
    \end{cases}.
  \]

  \emph{Proof of (ii).} On $\mathbb{R}^d\setminus\{0\}$, the function $K_s$ is
  $\mathcal{C}^\infty$ and a computation reveals that
  \[
    \Delta K_s=-c_{d,s}K_{s+2}.
  \]
  It follows that this equality also holds in the sense of distributions for
  test functions supported away from the origin. For general test functions,
  we proceed by integration by parts outside a centered ball of small radius.
  Namely, let $\varphi$ be a compactly supported $\mathcal{C}^\infty$ test
  function, and let $\varepsilon>0$. By the Green integration by parts formula
  for the open set $\{x\in\mathbb{R}^d:|x|>\varepsilon\}$, denoting
  $n(x)=-x|x|^{-1}$ the inner unit normal vector to the sphere
  $\{x\in\mathbb{R}^d:|x|=\varepsilon\}$ at the point $x$,
  \begin{multline*}
    \int_{|x|\geq\varepsilon}\Delta\varphi(x)K_s(x)\mathrm{d} x
    -\int_{|x|\geq\varepsilon}\varphi(x)\Delta K_s(x)\mathrm{d} x\\
    =\int_{|x|=\varepsilon}K_s(x)\nabla\varphi(x)
    \cdot n(x)\mathrm{d} \sigma_\varepsilon(x)
    -\int_{|x|=\varepsilon}\varphi(x)\nabla K_s(x)
    \cdot n(x)\mathrm{d} \sigma_\varepsilon(x).
  \end{multline*}

  If $s\neq0$ and $d>s+1$ then
  \begin{align*}
    \int_{|x|=\varepsilon}K_s(x)\nabla\varphi(x)
    \cdot n(x)\mathrm{d} \sigma_\varepsilon(x)
    &=\varepsilon^{-s}\int_{|x|=\varepsilon}\nabla\varphi(x)
    \cdot n(x)\mathrm{d} \sigma_\varepsilon(x)\\
    &=\varepsilon^{-s}O(\varepsilon^{d-1})
    =O(\varepsilon^{-s+d-1})
    =o_{\varepsilon\to0^+}(1),
  \end{align*}
  while, using $\nabla K_s(x)=-sx|x|^{-(s+2)}=-sxK_{s+2}(x)$ and
  $x\cdot n_x=-|x|$, if $d>s+2$,
  \begin{align*}
    \int_{|x|=\varepsilon}\varphi(x)\nabla K_s(x)
    \cdot n(x)\mathrm{d} \sigma_\varepsilon(x)
    &=
    s\varepsilon^{1-(s+2)}
    \int_{|x|=\varepsilon}\varphi(x)\mathrm{d} \sigma_\varepsilon(x)\\
    &=s\varepsilon^{1-(s+2)}O(\varepsilon^{d-1})
    =O(\varepsilon^{d-(s+2)})
    =o_{\varepsilon\to0^+}(1).
  \end{align*}
  Finally a careful analysis reveals that the conditions on $d$ are the same
  in the case $s=0$.

  \emph{Proof of (iii).} If $s<0$, then $K_s\leq0$, and hence $K_s*\mu$ is
  well defined and takes its values in $[-\infty,0]$. Similarly, if $s>0$,
  then $K_s\geq0$, and hence $K_s*\mu$ is well defined and takes its values in
  $[0,+\infty]$. If $s=0$ then $K_0\mathbf{1}_{\left|\cdot\right|\leq1}\geq0$
  while
  $\sup_{\mathbb{R}^d}K_0\mathbf{1}_{\left|\cdot\right|\geq1}/\log(1+\left|\cdot\right|)<\infty$,
  hence $K_0*\mu$ is well defined and takes its values in $(-\infty,+\infty]$.
  Next, by the Fubini\,--\,Tonelli theorem, for $R>0$, using (i) and
  the compactness of support of $\mu$ (note that this can be weakened into
  integrability of $\log(1+\left|\cdot\right|)\mathbf{1}_{s=0}$),
  \[
    \iint|K_s(x-y)|\mathbf{1}_{|x|\leq R}\mu(\mathrm{d} y)\mathrm{d} x
    =\int\Bigr(\int|K_s(x)|\mathbf{1}_{|x+y|\leq
      R}\mathrm{d} x\Bigr)\mu(\mathrm{d} y)
    <\infty.
  \]
  It follows that $K_s*\mu$ belongs to
  $L^1_{\mathrm{loc}}(\mathbb{R}^d,\mathrm{d} x)$.

  For the differentiability, let $\varphi:\mathbb{R}^d\to\mathbb{R}$ be a
  $\mathcal{C}^\infty$ and compactly supported test function. By the
  Fubini\,--\,Tonelli theorem, the Green integration by parts formula, and
  (ii), we have
  \begin{align*}
    \int(K_s*\mu)(x)\Delta\varphi(x)\mathrm{d} x
    &=
    \int\Bigr(\int
      K_s(x-y)\mu(\mathrm{d} y)\Bigr)\Delta\varphi(x)\mathrm{d} x\\
    &=
      \int\Bigr(\int K_s(x-y)\Delta\varphi(x)\mathrm{d} x\Bigr)\mu(\mathrm{d} y)\\
    &=-c_{d,s}%\int f(y)
    \int
      \Bigr( \int
      K_{s+2}(x-y)\varphi(x)\mathrm{d} x\Bigr) %\mathrm{d} y
      \mu(\mathrm{d} y)\\
    &=-c_{d,s}\int\varphi(x)\Bigr(\int
      K_{s+2}(x-y)%f(y) \mathrm{d} y
      \mu(\mathrm{d} y)
      \Bigr)\mathrm{d} x\\
    &=-c_{d,s}\int\varphi(x)(K_{s+2}*\mu)(x)\mathrm{d} x.
  \end{align*}

  \emph{Proof of (iv).} For the continuity, we follow closely the cutoff
  argument used in \cite[Lem.~4.3]{MR3262506}, see also
  \cite[Th.~1]{MR1018814}, \cite{MR1211771}, and \cite[Sec.~5.3]{MR1428685}.
  Namely, let us consider first the case $s>0$. For $n\geq1$ and
  $x\in\mathbb{R}^d$, let us define
  \[
    R_n(x):=\int f(y)K_s(x-y)\mathbf{1}_{|K_s(x-y)|\leq n}\mathrm{d} y
  \]
  and
  \[
    T_n(x):=(K_s*\mu)(x)-R_n(x)
    =\int f(y)K_s(x-y)\mathbf{1}_{|K_s(x-y)|\geq n}\mathrm{d} y.
  \]
  By the dominated convergence theorem, $R_n$ is continuous on $\mathbb{R}^d$.
  Let us show now that $\lim_{n\to\infty}T_n=0$ uniformly on compact subsets,
  which will prove the continuity of $K_s*\mu$. Let $q:=p/(p-1)$ be the Hölder
  conjugate exponent of $p$. Now, by the Hölder inequality, using the fact
  that $K_s=\left|\cdot\right|^{-s}$, $s>0$,
  \[
    0\leq T_n(x)
    =\int f(y)\frac{\mathbf{1}_{|x-y|\leq n^{-1/s}}}{|x-y|^s}\mathrm{d} y
    \leq\left\Vert f\right\Vert_{p,B(x,1)}\varepsilon_n^{1/q}
  \]
  where $B(x,r):=\{x\in\mathbb{R}^d:|x|\leq r\}$ is the closed centered ball
  of radius $r$, where $\left\Vert\cdot\right\Vert_{p,C}$ denotes the $L^p$
  norm with respect to the trace of the Lebesgue measure on $C$, where
  \[
    \varepsilon_n:=|\mathbb{S}^{d-1}|
    \int_0^{n^{-1/s}}\frac{\mathrm{d} r}{r^{qs-d+1}},
  \]
  and where $|\mathbb{S}^{d-1}|$ is the surface area of the unit sphere
  $\{x\in\mathbb{R}^d:|x|=1\}$. The condition $p>d/(d-s)$, which is equivalent
  to $qs-d+1<1$, ensures that $\varepsilon_n$ is finite for all $n$ and that
  $\lim_{n\to\infty}\varepsilon_n=0$. Hence, if $C\subset\mathbb{R}^d$ is a
  compact set, then, denoting
  $C_1:=\{x\in\mathbb{R}^d:\mathrm{dist}(x,C)\leq1\}$, we have
  \[
    \sup_{x\in C}|T_n(x)|
    \leq\left\Vert f\right\Vert_{p,K_1}\varepsilon_n^{1/q}
    \underset{n\to\infty}{\longrightarrow}0,
  \]
  which completes the proof of the continuity of $K_s*\mu$. The case $s<0$ is
  entirely similar up to a sign. It remains to examine the case $s=0$. Let us
  write $K_0=K_0^+-K_0^-$ with $K_0^\pm\geq0$, namely
  $K_0^+=-\log\left|\cdot\right|\mathbf{1}_{\left|\cdot\right|\leq1}$ and
  $K_0^-=\log\left|\cdot\right|\mathbf{1}_{\left|\cdot\right|>1}$. To
  establish the continuity of $K_0^+*\mu$ we write
  \[
    0\leq T_n^+(x)
    :=\int f(y)
      \log\frac{1}{|x-y|}
      \mathbf{1}_{|x-y|\leq1}\mathbf{1}_{|x-y|\leq \mathrm{e}^{-n}}\mathrm{d} y\\
      \leq\left\Vert f\right\Vert_{p,B(x,1)}
      (\varepsilon_n^+)^{1/q}
  \]
  where
  \[
    \varepsilon_n^+
    =-|\mathbb{S}^{d-1}|\int_0^{\mathrm{e}^{-n}}r^{d-1}\log(r)\mathrm{d} r
    \underset{n\to\infty}{\longrightarrow}0.
  \]
  On the other hand, the continuity of $K_0^-*\mu$ follows from that of
  $K_0^-$. Hence $K_0*\mu$ is continuous.
\end{proof}

\section{Key formulas for potential plus external field}
\label{se:key}
\label{appB}

Let $d$, $s$, $\mu_{\mathrm{eq}}=\nu_R$, $R$, and $\sigma_{S_1}$ be as in Theorem
\ref{th:d=s+3}. For $x\in\mathbb{R}^d$, the quantity
$\Phi(x):=(K_s*\mu_{\mathrm{eq}})(x)+\gamma\left|x\right|^2$ depends only on
$\lambda:=|x|/R$ and we can define
\begin{equation}\label{eq:varphi}
  \varphi(\lambda)
  :=
  \Phi(x)
  =U^{\mu_{\mathrm{eq}}}(x)+\gamma\left|x\right|^2
  =U^{\mu_{\mathrm{eq}}}(x)+\gamma R^2\lambda^2.
\end{equation}

The following lemmas provide key formulas for the potential plus external
field $\varphi$.

\begin{lemma}[Integral formula for potential]\label{le:varphi}
  For $\lambda\geq0$, denoting
  $\displaystyle c_d := \frac{2
    \,\mathrm{sign}(d-3)\Gamma(\frac{d+1}{2})}{\pi\Gamma(\frac{d-1}{2})}$,
  \[
    \varphi(\lambda) =
    \begin{cases}
      \displaystyle
      \frac{1}{R^{d-3}}\left(
        c_d \int_0^1 \Bigr(\int_0^{\pi}
        \frac{\sin^{d-2}(\theta)}
        {(\lambda^2-2r\lambda\cos(\theta)+r^2)^{\frac{d-3}{2}}}\mathrm{d} \theta\Bigr)
        \frac{r^{d-1}\,\mathrm{d}r}{\sqrt{1-r^2}}+\gamma R^{d-1}\lambda^2\right)
      &\text{if $d\neq 3$}\\[1em]
      \displaystyle
      -\int_0^1\!\frac{(\lambda +r)^2 \log(\lambda+r)-(\lambda-r)^2\log|\lambda-r|}{\pi\lambda} \frac{r\,\mathrm{d}r}{\sqrt{1-r^2}}-\log R+\frac{1}{2}+\gamma R^2\lambda^2
      &\text{if $d = 3$}
    \end{cases}.
  \]
\end{lemma}

Note that $\gamma R^{d-1}$ does not depend on $\gamma$.

\begin{proof}%[Proof of Lemma \ref{le:varphi}]
  By the Funk\,--\,Hecke formula \eqref{eq:funkhecke}, for $x\in\mathbb{R}^d$,
  $s\neq0$, with
  $C_d:= \mathrm{sign}(s)\frac{\Gamma(\frac{d+1}{2})}{\pi^{\frac{d+1}{2}}}$,
  \begin{align}
    U^{\mu_{\mathrm{eq}}}(x)
    &=C_d
      \int_{|y|\leq1}\frac{\mathrm{d} y}{|x-R y|^{s}\sqrt{1-|y|^2}}\nonumber\\
    &=C_d|S_1|
      \int_0^1\int_{S_1}\frac{\sigma_{S_1}(\mathrm{d} y)
      r^{d-1}\,\mathrm{d}r}{(|x|^2-2rR\langle
      x,y\rangle+r^2R^2)^{\frac{s}{2}}\sqrt{1-r^2}}\nonumber\\
    &=\frac{C_d|S_1|\tau_{d-1}}{R^s}
      \int_0^1\Bigr(\int_{-1}^1\frac{(1-t^2)^{\frac{s}{2}}}{(\lambda^2-2r\lambda
      t+r^2)^{\frac{s}{2}}}\mathrm{d} t\Bigr)\frac{r^{d-1}}{\sqrt{1-r^2}}\,\mathrm{d}r\nonumber\\
    &=\frac{2\mathrm{sign}(s)\Gamma(\frac{d+1}{2})}{\pi\Gamma(\frac{d-1}{2})R^{d-3}}
      \int_0^1\Bigr(\int_0^{\pi}\frac{\sin^{s+1}(\theta)}{(\lambda^2-2r\lambda\cos(\theta)+r^2)^{\frac{s}{2}}}\mathrm{d} \theta\Bigr)\frac{r^{d-1}}{\sqrt{1-r^2}}\,\mathrm{d}r,%\label{eq:U}
  \end{align}
  while if $s=0$ ($d=3$),
  \begin{align*}
    U^{\mu_{\mathrm{eq}}}(x)
    &=-\frac{1}{\pi^2}\int_{|y|\leq 1}\frac{\log\left|x-Ry\right|}{\sqrt{1-|y|^2}}\mathrm{d} y\\
    &=-\frac{2}{\pi}\int_0^1\Bigr(\int_{\mathbb{S}^2}\log\bigr(\lambda^2R^2-2rR\langle
      x,y\rangle+r^2R^2\bigr)\sigma_{S_1}(\mathrm{d} y)\Bigr)\frac{r^2}{\sqrt{1-r^2}}\,\mathrm{d}r\\
    &=-\frac{1}{\pi}\int_0^1\Bigr(4\log R+\int_{-1}^1\log(\lambda^2-2r\lambda t+r^2)\mathrm{d} t\Bigr)\frac{r^2}{\sqrt{1-r^2}}\,\mathrm{d}r.
  \end{align*}
  Finally we observe that
  \[
  \int_{-1}^1\log(\lambda^2-2r\lambda t+r^2)\mathrm{d} t
  =\frac{(\lambda
    +r)^2\log(\lambda+r)-(\lambda-r)^2\log|\lambda-r|}{r\lambda}-2.
\]
\end{proof}

\begin{lemma}[Landen transform and a special function]\label{le:landend}
  For $d\in\{2,3,\ldots\}$, $\lambda\geq0$, and $r\in[0,1]$,
  \[
    \int_0^{\pi}\frac{\sin^{d-2}(\theta)}{(\lambda^2-2r\lambda\cos(\theta)+r^2)^{\frac{d-3}{2}}}\mathrm{d}\theta
    =
    \frac{2^{d-1}}{(\lambda+r)^{d-3}}\mathcal{S}_{d-3}\left(\frac{4\lambda
      r}{(\lambda+r)^2}\right),
  \]
  where for $z\in[0,1]$,
  \[
    \mathcal{S}_s(z):=
    \int_0^{\frac{\pi}{2}}\frac{\sin^{s+1}(\alpha)\cos^{s+1}(\alpha)}{(1-z\sin^2(\alpha))^{\frac{s}{2}}}\mathrm{d}\alpha
    = \int_0^1\frac{t^{s+1}(1-t^2)^{\frac{s}{2}}}{(1-zt^2)^{\frac{s}{2}}}\mathrm{d} t
    = \frac{\Gamma(\frac{s}{2}+1)^2}{2\Gamma(s+2)}
    \,{}_2F_1\Bigr(\frac{s}{2}+1, \frac{s}{2};s+2;z\Bigr).
  \]
\end{lemma}

\begin{proof}%[Proof of Lemma \ref{le:landend}]
  We set $\rho_1:=\frac{2\lambda r}{\lambda^2+r^2}$,
  $\rho_2:=\frac{2\rho_1}{1+\rho_1}=\frac{4\lambda r}{(\lambda+r)^2}$, which
  gives $(\lambda^2+r^2)(1+\rho_1)=(\lambda+r)^2$. Using the change of
  variable $\theta=2\alpha$, and $\cos(\theta)=1-2\sin^2(\alpha)$,
  $\sin(\theta)=2\sin(\alpha)\cos(\alpha)$, we get
  \begin{align*}
    \int_0^{\pi}\frac{\sin^{s+1}(\theta)}{(\lambda^2-2r\lambda\cos(\theta)+r^2)^{\frac{s}{2}}}\mathrm{d}
    \theta
    &=\frac{1}{(\lambda^2+r^2)^{\frac{s}{2}}}
      \int_0^{\pi}\frac{\sin^{s+1}(\theta)}{(1-\rho_1\cos(\theta))^{\frac{s}{2}}}\mathrm{d}
      \theta\\
    &=\frac{2^{s+2}}{(\lambda^2+r^2)^{\frac{s}{2}}}
      \int_0^{\frac{\pi}{2}}\frac{\sin^{s+1}(\alpha)\cos^{s+1}(\alpha)}{(1-\rho_1(1-2\sin^2(\alpha)))^{\frac{s}{2}}}\mathrm{d}
      \alpha\\
    &=\frac{2^{s+2}}{(\lambda^2+r^2)^{\frac{s}{2}}(1-\rho_1)^{\frac{s}{2}}}
      \int_0^{\frac{\pi}{2}}\frac{\sin^{s+1}(\alpha)\cos^{s+1}(\alpha)}{(1+\frac{2\rho_1}{1-\rho_1}\sin^2(\alpha))^{\frac{s}{2}}}\mathrm{d}
      \alpha\\
    &=\frac{2^{s+2}}{(\lambda+r)^s}
      \frac{(1+\rho_1)^{\frac{s}{2}}}{(1-\rho_1)^{\frac{s}{2}}}
      \mathcal{S}_s\Bigr(-\frac{2\rho_1}{1-\rho_1}\Bigr).
  \end{align*}
  But for $z\in[0,1]$,
  \begin{equation*}
    \mathcal{S}_s(-z) = \int_0^{\frac{\pi}{2}}
    \frac{\sin^{s+1}(\alpha)\cos^{s+1}(\alpha)}
    {(1+z\sin^2(\alpha))^{\frac{s}{2}}} \mathrm{d}\alpha =
    \frac{1}{(1+z)^{\frac{s}{2}}}
    \int_0^{\frac{\pi}{2}}\frac{\sin^{s+1}(\alpha)\cos^{s+1}(\alpha)}
    {(1-\frac{z}{1+z}\cos^2(\alpha))^{\frac{s}{2}}} \mathrm{d}\alpha =
    \frac{1}{(1+z)^{\frac{s}{2}}}\mathcal{S}_s\Bigr(\frac{z}{1+z}\Bigr).
  \end{equation*}
  In particular, with $z=\frac{2\rho_1}{1-\rho_1}$, we get
  $1+z=\frac{1+\rho_1}{1-\rho_1}$ and
  $\frac{z}{1+z}=\frac{2\rho_2}{1+\rho_1}=\rho_2$; therefore
  \[
    \mathcal{S}_s\Bigr(-\frac{2\rho_1}{1-\rho_1}\Bigr)
    =\frac{(1-\rho_1)^{\frac{s}{2}}}{(1+\rho_1)^{\frac{s}{2}}}\mathcal{S}_s(\rho_2),
  \]
  hence the desired integral formula in terms of $\mathcal{S}_{d-3}$.
  % It remains to derive the hypergeometric formula for $S_s$.
  Finally the hypergeometric formula for $\mathcal{S}_s$ follows from
  $\mathcal{S}_s(z)=\frac{1}{2}\int_0^1u^{\frac{s}{2}}(1-u)^{\frac{s}{2}}(1-zu)^{-\frac{s}{2}}\mathrm{d}
  u$ and the Euler integral formula \eqref{eq:2F1Int}. Note that we could
  alternatively proceed as in the proof of Corollary \ref{co:riesz} via the
  Newton binomial series \eqref{eq:binom}.
\end{proof}

\begin{lemma}[$d=2,s=-1$]\label{le:varphi:d2s-1} If $(d,s)=(2,-1)$, then%
  \begin{align*}
    \varphi(\lambda)
    &=
    -\frac{1}{4\gamma}\int_0^1(\lambda+r)E\left(\frac{4\lambda r}{(\lambda+r)^2}\right)
    \frac{r}{\sqrt{1-r^2}}\,\mathrm{d}r
      +\frac{\pi^2}{64\gamma}\lambda^2,\quad\lambda\geq0,\\
    \varphi'(\lambda)
    &=-\frac{1}{8\gamma}\int_0^1
    \left[
      \Bigr(1+\frac{r}{\lambda}\Bigr)
      E\left(\frac{4\lambda r}{(r+\lambda)^2}\right)
      +
      \Bigr(1-\frac{r}{\lambda}\Bigr)
      K\left(\frac{4\lambda r}{(r+\lambda)^2}\right)
    \right]
    \frac{r}{\sqrt{1-r^2}}\,\mathrm{d}r +
      \frac{\pi^2}{32\gamma}\lambda,\quad\lambda>0,\\
    \varphi(\lambda)
    &=
      -\frac{\pi}{8\gamma}\lambda \,{}_2F_1%
      \Bigr( -\frac{1}{2},-\frac{1}{2};\frac{3}{2};
      \frac{1}{\lambda^2}\Bigr)+\frac{\pi^2}{64\gamma}\lambda^2,\quad\lambda\geq1\\
    \varphi'(\lambda)
    &=-\frac{\pi}{16\gamma}\left(\sqrt{1-\frac{1}{\lambda^2}}-\frac{\arcsin\left(\frac{1}{\lambda
      }\right)}{\frac{1}{\lambda}}\right) +\frac{\pi^2}{32\gamma}\lambda,\quad\lambda>1.
  \end{align*}
\end{lemma}

\begin{proof}%[Proof of Lemma \ref{le:varphi:d2s-1}]
  By combining Lemmas \ref{le:varphi} and \ref{le:landend} with $d=2$, we get,
  for $\lambda\geq0$,
  \begin{equation}\label{eq:vphid=2s-1}
    \varphi(\lambda)
    =
    -\frac{R}{2\pi}\int_0^1(\lambda+r)
    E\left(\frac{4\lambda r}{(\lambda+r)^2}\right)
    \frac{r}{\sqrt{1-r^2}}\,\mathrm{d}r
    +\gamma R^2\lambda^2.
  \end{equation}
  Next, by using the well-known ordinary differential equations (for $0<z<1$)
  \begin{equation}
    K'(z)=\frac{E(z)-(1-z)K(z)}{2(1-z)z}%
    \quad\text{and}\quad%
    E'(z)=\frac{E(z)-K(z)}{2z}
  \end{equation}
  we get, after some algebra,
  \begin{equation}
    \varphi'(\lambda)
    =
    -\frac{R}{\pi}\int_0^1
    \left[
      \Bigr(1+\frac{r}{\lambda}\Bigr)
      E\left(\frac{4\lambda r}{(r+\lambda)^2}\right)
      +
      \Bigr(1-\frac{r}{\lambda}\Bigr)
      K\left(\frac{4\lambda r}{(r+\lambda)^2}\right)
    \right]
    \frac{r}{\sqrt{1-r^2}}\,\mathrm{d}r
    +2\gamma R^2\lambda.
  \end{equation}
  By combining \eqref{eq:EllipticE} with the quadratic transformation
  \eqref{eq:quad2F14E} we get, for $z\in[0,1)$,
  \begin{equation}
    (1+z)E\Bigr(\frac{4z}{(1+z)^2}\Bigr)
    =(1+z)\frac{\pi}{2}\,{}_2F_1\Bigr(-\frac{1}{2},\frac{1}{2};1;\frac{4z}{(1+z)^2}\Bigr)
    =\frac{\pi}{2}\,{}_2F_1\Bigr(-\frac{1}{2},-\frac{1}{2};1;z^2\Bigr).
    \label{eq:IvoryE}
  \end{equation}
  Thus \eqref{eq:vphid=2s-1} gives, with $z=\frac{r}{\lambda}\in[0,1)$,
  $r\in[0,1]$, and $\lambda>1$,
  \begin{align*}
    \frac{2}{\pi}\int_0^1
    \Bigr(1+\frac{r}{\lambda}\Bigr)
    E\Bigr(\frac{4\lambda r}{(\lambda+r)^2}\Bigr)
    \frac{r}{\sqrt{1-r^2}}\mathrm{d}r
    &=
      \frac{2}{\pi}
      \int_0^1(1+z)E\Bigr(\frac{4z}{(1+z)^2}\Bigr)\frac{r}{\sqrt{1-r^2}}\mathrm{d}r\\
    &=
      \int_0^1
      \,{}_2F_1\Bigr(-\frac{1}{2},-\frac{1}{2};1;z^2\Bigr)\frac{r}{\sqrt{1-r^2}}\mathrm{d}r\\
    &=\sum_{n=0}^\infty\frac{(-\frac{1}{2})_n^2}{n!^2}
      \Bigr(\frac{1}{\lambda}\Bigr)^{2n}
      \int_0^1\frac{r^{2n+1}}{\sqrt{1-r^2}}\mathrm{d}r\\
    &=\sum_{n=0}^\infty\frac{(-\frac{1}{2})_n^2}{(\frac{3}{2})_nn!}
      \Bigr(\frac{1}{\lambda}\Bigr)^{2n}%\\
    =\,{}_2F_1\Bigr(
    -\frac{1}{2},-\frac{1}{2};\frac{3}{2};
    \Bigr(\frac{1}{\lambda}\Bigr)^2
    \Bigr),
  \end{align*}
  and therefore, using \eqref{eq:vphid=2s-1}, we get, when $\lambda>1$,
  \begin{equation}
    \varphi(\lambda) =
    -R\lambda \,{}_2F_1%
    \Bigr( -\frac{1}{2},-\frac{1}{2};\frac{3}{2};
    \frac{1}{\lambda^2}\Bigr)+\gamma R^2\lambda^2,
  \end{equation}
  hence
  \begin{align}
    \varphi'(\lambda)
    &=-R\,{}_2F_1\Bigr(-\frac{1}{2},-\frac{1}{2};\frac{3}{2};\frac{1}{\lambda^2}\Bigr)
      +\frac{1}{3}\frac{R}{\lambda^2}\,{}_2F_1\Bigr(\frac{1}{2},\frac{1}{2};\frac{5}{2};\frac{R^2}{\lambda
      ^2}\Bigr)+2\gamma R^2\lambda\nonumber\\
    &=-\frac{R}{2}\left(\sqrt{1-\frac{1}{\lambda^2}}-\frac{\arcsin\left(\frac{1}{\lambda
      }\right)}{\frac{1}{\lambda}}\right) +2\gamma R^2\lambda.\label{eq:d=2s=-1phip3}
  \end{align}
  Finally, it remains to recall that $R=\frac{\pi}{8\gamma}$.
\end{proof}

\begin{lemma}[$d=3$, $s=0$]\label{le:varphi:d3s0}
  If $(d,s)=(3,0)$ then, for $\lambda\geq0$,
  \begin{align*}
    \varphi(\lambda)
    & = \frac{1+\log(3\gamma)}{2}-\frac{1}{2\pi\lambda}
    \int_0^1\frac{\bigr((\lambda +r)^2 \log ((\lambda +r)^2)-(\lambda -r)^2
    \log \left((\lambda -r )^2\right)\bigr)r}{\sqrt{1-r^2}}\,\mathrm{d}r
    + \frac{\lambda^2}{3}\\
  \intertext{Moreover, if $\lambda\geq1$,}
    \varphi(\lambda)
    &=-\frac{1}{2}+\frac{\log(3\gamma)}{2}+\frac{\lambda^2+2}{3\lambda}\sqrt{\lambda^2-1}-\log\frac{\lambda+\sqrt{\lambda^2-1}}{2}.%\label{eq:3dphi}
  \end{align*}
\end{lemma}

\begin{proof}%[Proof of Lemma \ref{le:varphi:d3s0}]
  From Lemma \ref{le:varphi} and with the formulas ($b\in(-a,a)$)
  \[
    \int_{-1}^1\log(a-bt)\mathrm{d} t%
    =\frac{(a+b)\log(a+b)-(a-b)\log (a-b)}{b}-2%
    \quad\text{and}\quad %
    \int_0^1\frac{r^2}{\sqrt{1-r^2}}\,\mathrm{d}r=\frac{\pi}{4}
  \]
  with $a=\lambda^2+r^2$ and $b=2r\lambda$, we obtain, for $\lambda\geq0$,
  \[
    \varphi(\lambda)
    =\frac{1}{2}-\log(R)-\int_0^1\frac{(\lambda +r)^2
      \log ((\lambda +r)^2)-(\lambda -r)^2 \log \left((\lambda -r
        )^2\right)}{2\pi\lambda}\frac{r}{\sqrt{1-r^2}}\,\mathrm{d}r +\gamma R^2\lambda^2.
  \]
  Recall that $R=\frac{1}{\gamma\sqrt{3}}$. It follows that if $\lambda>1$,
  \begin{align*}
    \varphi(\lambda)-\frac{\lambda^2}{3}
    &=\frac{1}{2}+\frac{\log(3\gamma)}{2}-\frac{\lambda}{\pi}\int_0^1\frac{\log(\lambda +r)-\log(\lambda-r)}{\sqrt{1-r^2}}r\mathrm{d}r\\
    &\quad -\frac{2}{\pi}\int_0^1\frac{\log(\lambda +r)+\log(\lambda -r)}{\sqrt{1-r^2}}r^2\mathrm{d}r\\
    &\quad -\frac{1}{\pi\lambda}\int_0^1\frac{\log(\lambda +r)-\log(\lambda -r)}{\sqrt{1-r^2}}r^3\mathrm{d}r.
  \end{align*}
  These three last integrals can be explicitly computed and we obtain the
  desired formula.
\end{proof}

\section{Proof of Riesz formula due to Riesz}
\label{ap:le:riesz}
\label{appC}

\subsection{Cross-ratio}
\label{appC.1}

Recall that in projective geometry, the \emph{cross-ratio} (\emph{birapport}
in French) of four distinct points $z_1,z_2,z_3,z_4$ on the Riemann sphere
$\mathbb{C}\cup\{\infty\}$ is defined by
\[
  [z_1,z_2;z_3,z_4]
  =\frac{z_3-z_1}{z_3-z_2}/\frac{z_4-z_1}{z_4-z_2}
  =\frac{(z_3-z_1)(z_4-z_2)}{(z_3-z_2)(z_4-z_1)},
\]
where each length is removed from the formula if it involves the point at
infinity. The following lemma is a classical and important result of
projective geometry.

\begin{lemma}[Cross-ratio invariance]\label{le:crossratio}
  The cross-ratio is invariant under the Möbius
  transform
  \[
    z\mapsto \frac{az+b}{cz+d},\quad ad-bc\neq0,
  \]
  and thus its modulus is invariant under the ``conjugated Möbius transform''
  $\displaystyle z\mapsto\frac{a\bar{z}+b}{c\bar{z}+d}$, $ad-bc\neq0$.
\end{lemma}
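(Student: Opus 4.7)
The plan is a direct algebraic verification. First I would derive the fundamental difference formula for a Möbius transform $M(z):=(az+b)/(cz+d)$ with $ad-bc\neq 0$: putting the two fractions over a common denominator gives
\[
M(z_i)-M(z_j)=\frac{(ad-bc)(z_i-z_j)}{(cz_i+d)(cz_j+d)}.
\]
This is the only nontrivial computation in the proof.

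Next I would substitute this identity into each of the four differences appearing in $[M(z_1),M(z_2);M(z_3),M(z_4)]=\frac{(M(z_3)-M(z_1))(M(z_4)-M(z_2))}{(M(z_3)-M(z_2))(M(z_4)-M(z_1))}$. In the resulting expression, the factor $(ad-bc)^2$ appears in both numerator and denominator and cancels; moreover each of the four factors $cz_k+d$, $k=1,2,3,4$, appears exactly once in the numerator and once in the denominator and so also cancels. What survives is precisely $[z_1,z_2;z_3,z_4]$.

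The degenerate cases in which some $z_i=\infty$, or $cz_i+d=0$ so that $M(z_i)=\infty$, are handled by the stated convention that terms involving $\infty$ are suppressed in the cross-ratio; equivalently, by taking the appropriate limits in the algebraic computation above. This is routine bookkeeping rather than a genuine obstacle. (Alternatively one could decompose $M$ into a composition of translations $z\mapsto z+t$, nonzero dilations $z\mapsto \lambda z$, and the inversion $z\mapsto 1/z$, and check invariance for each generator; but the one-line identity above is more efficient.)

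Finally, for the conjugated Möbius transform $z\mapsto (a\bar z+b)/(c\bar z+d)=M(\bar z)$, the cross-ratio of the images equals that of $\bar z_1,\bar z_2,\bar z_3,\bar z_4$ by the first part, and inspection of the defining formula for the cross-ratio shows that the latter equals $\overline{[z_1,z_2;z_3,z_4]}$. Taking moduli yields the claimed invariance. The statement is a purely algebraic identity classical in projective geometry, and no substantial obstacle arises in the proof.
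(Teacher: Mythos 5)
The paper states this lemma without proof, merely noting that it is ``a classical and important result of projective geometry,'' so there is no in-paper argument to compare against. Your proof is correct and is the standard one: the key is the difference identity
\[
M(z_i)-M(z_j)=\frac{(ad-bc)(z_i-z_j)}{(cz_i+d)(cz_j+d)},
\]
after which both $(ad-bc)^2$ and the four factors $(cz_k+d)$ cancel from the cross-ratio, and the conjugated case follows because complex conjugation commutes with the rational operations defining the cross-ratio. Your handling of the degenerate cases (a point equal to $\infty$, or a pole $cz_i+d=0$) via the stated convention or via limits is the right remark to make, and the alternative route through the generators (translations, dilations, inversion) that you mention is the other classical proof; either is acceptable. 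No gap.
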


\subsection{Inversions}
\label{appC.2}

In $\mathbb{R}^d$, $d\geq1$, the inversion with center $x_0$ and radius $R>0$
is the transform that maps $x\neq x_0$ to $T(x)$ on the half line started from
$x_0$ and passing through $x$, in such a way that
\[
  \left|x-x_0\right|\; \left|T(x)-x_0\right| = R^2.
\]
The circle centered at $x_0$ and of radius $R$ is pointwise invariant under
the transformation in the sense that all its elements are fixed points of the
transformation. The transformation maps the interior of this circle to its
exterior, and vice versa. In projective geometry, this transformation is
extended to the $d$-dimensional sphere by mapping $x_0$ to the point at
infinity $\infty$, and vice versa. We have
\[
  T(x)-x_0=\frac{R^2}{|x-x_0|^2}(x-x_0),
\]
which exchanges $x_0$ and $\infty$. In dimension $d=2$, using complex numbers,
$T(z)-z_0 = R^2 / (\overline{z-z_0})$, which is a special case of the
conjugated Möbius transform
$z\mapsto\frac{\alpha\bar{z}+\beta}{\gamma\bar{z}+\delta}$. It is worth
mentioning that inversions are geometric transformations at the basis of the
Kelvin transform of functions $\mathbb{R}^d\to\mathbb{R}$.

\begin{lemma}[Classical properties of inversions]\label{le:inversion}
  Let $T$ be the inversion of $\mathbb{R}^d$, $d\geq1$, with center
  $x_0\in\mathbb{R}^d$ and radius $R>0$. Then we have the following
  properties.
  \begin{enumerate}
  \item For all $x$,
    $\displaystyle\left|x-T(x)\right|=\frac{|R^2-\left|x-x_0\right|^2|}{\left|x-x_0\right|}$.
  \item For all $x,y$,
    $\displaystyle\left|T(x)-T(y)\right|=R^2\frac{\left|x-y\right|}{\left|x-x_0\right|\left|y-x_0\right|}$.
  \item As differential forms $\displaystyle\frac{\mathrm{d} T(x)}{|T(x)-x_0|^d}=\frac{\mathrm{d} x}{|x-x_0|^d}$.
  \item The modulus of the cross-ratio of distinct coplanar points
    $x_1,x_2,x_3,x_4$ is invariant under $T$.
  \end{enumerate}
\end{lemma}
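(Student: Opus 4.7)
I would prove the four properties in the order stated, since each is essentially a direct computation, and later parts build on earlier ones. Without loss of generality I can reduce to the case $x_0 = 0$ by translation invariance of all quantities involved (each formula only depends on differences $x - x_0$, $y - x_0$, and on $R$), so that $T(x) = R^2 x/|x|^2$.

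For part (1), I would write $x - T(x) = (x-x_0)\bigl(1 - R^2/|x-x_0|^2\bigr)$ and read off the formula by taking absolute value. For part (2), the cleanest route is to compute $|T(x) - T(y)|^2$ by expanding
\[
 T(x) - T(y) = R^2\,\frac{(x-x_0)|y-x_0|^2 - (y-x_0)|x-x_0|^2}{|x-x_0|^2 |y-x_0|^2},
\]
and then observing that the squared-norm of the numerator factors as $|x-x_0|^2|y-x_0|^2|x-y|^2$; this identity in turn follows from the polarization identity $|x-y|^2 = |x-x_0|^2 + |y-x_0|^2 - 2\langle x-x_0, y-x_0\rangle$. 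Taking square roots yields the claim.

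For part (3), I would compute the Jacobian matrix of $T$ at $x$ (taking $x_0=0$): $DT(x) = \frac{R^2}{|x|^2}\bigl(I - 2\frac{xx^{\top}}{|x|^2}\bigr)$. The bracketed factor is a Householder reflection, so its determinant is $-1$, and therefore $|\det DT(x)| = (R/|x-x_0|)^{2d}$. Combining with $|T(x)-x_0| = R^2/|x-x_0|$ yields $|T(x)-x_0|^d = R^{2d}/|x-x_0|^d$, and dividing gives the stated equality of unsigned volume forms.

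For part (4), I would apply part (2) four times to the differences appearing in the cross-ratio: for any two points $x_i, x_j$ among the four,
\[
 |T(x_i) - T(x_j)| = R^2\,\frac{|x_i - x_j|}{|x_i - x_0|\,|x_j - x_0|}.
\]
Substituting this into $|[T(x_1),T(x_2);T(x_3),T(x_4)]|$, the factors $R^2$ and the distances $|x_k - x_0|$ at each vertex $x_k$ appear once in a numerator and once in a denominator, so they cancel exactly, leaving $|[x_1,x_2;x_3,x_4]|$. No step here is really a genuine obstacle; the only mildly delicate point is the sign/orientation in (3), which is handled once one recognizes the Householder structure, and the careful bookkeeping of which $|x_k-x_0|$ factors cancel in (4).
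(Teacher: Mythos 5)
Your proof is correct. Parts (1)--(3) follow essentially the same computations as the paper: reduction to $x_0=0$, reading off (1) from the collinearity of $0,x,T(x)$ or (equivalently, in your version) from $x-T(x)=x(1-R^2/|x|^2)$, expanding $|T(x)-T(y)|^2$ for (2), and computing the Jacobian $\frac{R^2}{|x|^2}\bigl(I-2\hat x\hat x^{\top}\bigr)$ for (3). The only stylistic difference in (3) is that you identify the bracketed matrix as a Householder reflection with determinant $-1$, whereas the paper applies the matrix determinant lemma $\det(A+u\otimes v)=(1+u\cdot A^{-1}v)\det A$; both give $|\det DT(x)|=(R^2/|x|^2)^d$.

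For part (4) you take a genuinely different route. The paper invokes the observation that $T$ restricted to the plane through the four points is a conjugated Möbius transform and then appeals to the Möbius invariance of the cross-ratio (Lemma~\ref{le:crossratio}). You instead apply part (2) to each of the four distances in
\[
  \bigl|[x_1,x_2;x_3,x_4]\bigr| = \frac{|x_3-x_1|\,|x_4-x_2|}{|x_3-x_2|\,|x_4-x_1|}
\]
and observe that the $R^2$ factors and the $|x_k-x_0|$ factors telescope. This is more elementary: it avoids any complex-analytic machinery and, since it only uses the ratio-of-distances formula, it does not even need the coplanarity hypothesis to make sense. The one point you leave implicit is the case where one of the $x_k$ coincides with the center $x_0$ (equivalently, where $T(x_k)=\infty$); the paper actually uses the lemma in that degenerate situation when computing $|[x^*,T(\infty);\infty,T(x^*)]|$. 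Your argument extends to that case by the usual convention of dropping the distances involving $\infty$, but it would be worth saying so explicitly, since part (2) as stated does not apply when $y=x_0$.
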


\begin{proof} We can assume without loss of generality that $x_0=0$.
  \begin{enumerate}
  \item Since $0,x,T(x)$ are aligned with $0$ at the edge we have
    \[
      |x-T(x)|
      =||x|-|T(x)||
      =\left||x|-\frac{R^2}{|x|}\right|
      =\frac{||x|^2-R^2|}{|x|}.
    \]
  \item We have
    \begin{align*}
      |T(x)-T(y)|^2
      &=|T(x)|^2+|T(y)|^2-2\langle T(x),T(y)\rangle\\
      &=\frac{R^4}{|x|^2}+\frac{R^4}{|y|^2}-2\frac{R^4}{|x|^2|y|^2}\langle x,y\rangle
      =\frac{R^4}{|x|^2|y|^2}|x-y|^2.
    \end{align*}
  \item We have
    $\displaystyle\mathrm{Jac}(T)(x)=\frac{R^2}{|x|^2}(I_d+u\otimes v)$,
    $\displaystyle u=\frac{x}{|x|}$, $\displaystyle v=-2\frac{x}{|x|}$,
    which gives then
    \[
      |\det\mathrm{Jac}(T)(x)|=\left(\frac{R^2}{|x|^2}\right)^d=\left(\frac{|T(x)|}{|x|}\right)^d,
    \]
    via the ``matrix determinant lemma''
    $\det(A+u\otimes v)=(1+u\cdot A^{-1}v)\det(A)$, the determinant analogue
    of the Sherman\,--\,Morrison formula
    $(A+u\otimes v)^{-1}=A^{-1}-\frac{A^{-1}u\otimes v A^{-1}}{1+v\cdot
      A^{-1}u}$.
\item Follows from the fact that $T$ restricted to the plane is a conjugated
  Möbius transform.
\end{enumerate}
\end{proof}

\subsection{Intersecting chords}
\label{appC.3}

The \emph{intersecting chords theorem} in Euclidean (planar) geometry states
that if $AA^*$ and $BB^*$ are two chords of a circle, intersecting at the
point $M$, see Figure \ref{fi:geom}, then
\[
  \mathrm{AM}\times\mathrm{MA^*}=\mathrm{BM}\times\mathrm{MB^*}.
\]
Indeed, the triangles $A^*MB$ and $AMB^*$ are similar, identical up to
rotation and scaling, more precisely they have two equal angles:
$\widehat{A^*MB}=\widehat{AMB^*}$ (opposite angles) and
$\widehat{MA^*B}=\widehat{MB^*A}$ (subtend the same arc).

\begin{figure}[htbp]
  \centering
  \begin{tikzpicture}[scale=.3]
    \draw(0,0) circle (4.01cm);
    \draw (2.89,2.77)-- (-2.83,-2.83);
    \draw (3.96,-0.6)-- (-2.86,2.8);
    \draw [dash pattern=on 5pt off 5pt] (-2.86,2.8)-- (2.89,2.77);
    \draw [dash pattern=on 5pt off 5pt] (3.96,-0.6)-- (-2.83,-2.83);
    \fill [color=black] (2.89,2.77) circle (2pt);
    \draw[color=blue,above right] (3.08,3.04) node {$A^*$};
    \fill [color=black] (-2.83,-2.83) circle (2pt);
    \draw[color=blue,below left] (-2.68,-2.58) node {$A$};
    \fill [color=black] (3.96,-0.6) circle (2pt);
    \draw[color=blue,right] (4.14,-0.34) node {$B^*$};
    \fill [color=black] (-2.86,2.8) circle (1.5pt);
    \draw[color=blue,left] (-2.7,3.06) node {$B$};
    \fill [color=black] (0.97,0.89) circle (3pt);
    \draw[color=black,right] (1.14,1.15) node {$M$};
  \end{tikzpicture}
  \begin{tikzpicture}[scale=.3]
    \draw(0,0) circle (4.01cm);
    \draw (-1,3.87)-- (-2.83,-2.83);
    \draw (4.01,0)-- (-4.01,0);
    \draw [dash pattern=on 5pt off 5pt] (-4.01,0)-- (-1,3.87);
    \draw [dash pattern=on 5pt off 5pt] (4.01,0)-- (-2.83,-2.83);
    \fill [color=black] (-1,3.87) circle (2pt);
    \draw[color=blue,above] (-1,3.87) node {$A^*$};
    \fill [color=black] (-2.83,-2.83) circle (2pt);
    \draw[color=blue,below left] (-2.68,-2.58) node {$A$};
    \fill [color=black] (4.01,0) circle (2pt);
    \draw[color=blue,right] (4.01,0) node {$B^*$};
    \fill [color=black] (-4.01,0) circle (1.5pt);
    \draw[color=blue,left] (-4.01,0) node {$B$};
    \fill [color=black] (-2.05,0) circle (3pt);
    \draw[color=black,above left] (-1.6,-0.1) node {$M$};
    \fill [color=black] (0,0) circle (4pt);
    \draw[color=black,above] (0,0) node {$O$};
  \end{tikzpicture}
  \begin{tikzpicture}[scale=.3]
    \draw(0,0) circle (4.01cm);
    \draw (-1,3.87)-- (-2.83,-2.83);
    \draw (4.01,0)-- (-4.01,0);
    \draw [dash pattern=on 5pt off 5pt] (-4.01,0)-- (-1,3.87);
    \draw [dash pattern=on 5pt off 5pt] (4.01,0)-- (-2.83,-2.83);
    \fill [color=black] (-1,3.87) circle (2pt);
    \draw[color=blue,above] (-1,3.87) node {$x^*$};
    \fill [color=black] (-2.83,-2.83) circle (2pt);
    \draw[color=blue,below left] (-2.68,-2.58) node {$x$};
    \fill [color=black] (4.01,0) circle (2pt);
    \draw[color=blue,right] (4.01,0) node {$z^*$};
    \fill [color=black] (-4.01,0) circle (1.5pt);
    \draw[color=blue,left] (-4.01,0) node {$z$};
    \fill [color=black] (-2.05,0) circle (3pt);
    \draw[color=black,above left] (-1.7,-0.2) node {$y$};
    \fill [color=black] (0,0) circle (4pt);
    \draw[color=black,above] (0,0) node {$O$};
  \end{tikzpicture}
  \caption{\label{fi:geom}Intersecting chords of a circle, $AA^*$ and $BB^*$
    in the first two pictures, $xx^*$ and $zz^*$ for the third. On the two
    last pictures, the chords $BB^*$ and $zz^*$ are diameters of the circle.
    On the right, $x,y\in\mathbb{R}^d$, $d\geq2$, $|x|=r$, $|y|<r$, $x^*$ is
    aligned with $x$ and $y$, $y$ separates $x$ and $x^*$.}
\end{figure}
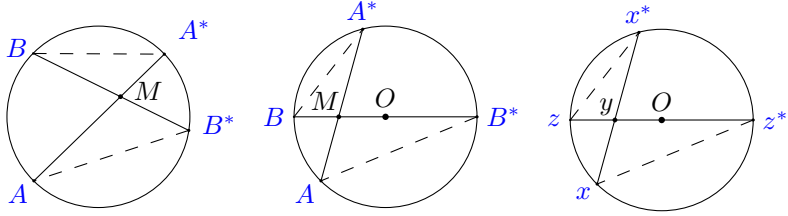

Suppose now that the circle has center $O$, radius $r$, that $BB^*$ is a
diameter, and that $M$ belongs to the segment $OB$ (instead $OB^*$).
Then $\mathrm{BM}=r-\mathrm{OM}$ while $\mathrm{MB^*}=\mathrm{OM} + r$ and
thus
\[
  \text{BM}\times\mathrm{MB^*}=(r-\text{OM})(\mathrm{OM}+r)=r^2-\mathrm{OM}^2.
\]
In Euclidean geometry, this quantity is known as the Laguerre power of the
point $M$ with respect to the circle. We deduce immediately the following
lemma.

\begin{lemma}[Intersecting chords]\label{le:interchords}
  For every chord $AA^*$ of a circle with center $O$ and radius $r$,
  intersecting an arbitrary diameter at point $M$, see Figure \ref{fi:geom},
  we have
  \[
    \mathrm{AM}\times\mathrm{MA^*} = r^2-\mathrm{OM}^2.
  \]
\end{lemma}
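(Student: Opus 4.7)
The plan is to apply the intersecting chords theorem (established in the paragraph preceding the statement by means of the similarity of triangles $A^*MB$ and $AMB^*$) to the configuration in which the second chord through $M$ is taken to be the diameter $BB^*$ itself. First I would invoke the theorem with the two chords $AA^*$ and $BB^*$ meeting at $M$, which yields the identity
\[
  \mathrm{AM}\times\mathrm{MA^*} = \mathrm{BM}\times\mathrm{MB^*}.
\]

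Next I would evaluate the right-hand side for a diameter through $O$. After relabeling $B$ and $B^*$ if necessary so that $M$ lies on the segment $OB$, the four points $B^*, O, M, B$ are collinear in this order on the diameter, with $|OB|=|OB^*|=r$; hence $\mathrm{BM}=r-\mathrm{OM}$ and $\mathrm{MB^*}=r+\mathrm{OM}$. Multiplying and invoking the identity $(r-\mathrm{OM})(r+\mathrm{OM})=r^2-\mathrm{OM}^2$ gives exactly the stated formula. In particular, the value $r^2-\mathrm{OM}^2$ depends only on the position of $M$ and not on the direction of the diameter chosen through $M$, as it should (this is the Laguerre power of $M$ with respect to the circle).

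I do not expect any real obstacle: the argument is a two-line consequence of the intersecting chords theorem combined with a difference of squares. The only mild care required concerns degenerate positions of $M$. If $M=O$, then $AA^*$ is itself a diameter so $\mathrm{AM}=\mathrm{MA^*}=r$ and both sides equal $r^2$; if $M$ lies on the circle, then $M$ coincides with one of $A$ or $A^*$, so the left-hand side vanishes and $r^2-\mathrm{OM}^2=0$ as well. Both limiting cases are consistent with the formula, so the identity holds in full generality for $M$ inside or on the circle.
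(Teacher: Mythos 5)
Your argument is exactly the paper's: the statement is deduced from the intersecting chords theorem by taking the second chord to be the diameter $BB^*$, writing $\mathrm{BM}=r-\mathrm{OM}$, $\mathrm{MB^*}=r+\mathrm{OM}$, and factoring the difference of squares. The extra remarks on the degenerate cases $M=O$ and $M$ on the circle are harmless additions but not needed.
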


\subsection{Riesz geometric argument}
\label{appC.4}

The argument is essentially two-dimensional and involves projective geometry.
Fix $r>0$ and $x,y\in\mathbb{R}^d$, $d\geq2$, with $|y|<r$. Let us define the
map $S:x\mapsto S(x)=x^*$ where $x^*\in\mathbb{R}^d$ is the point aligned with
$x, y$ such that $y$ separates $x$ and $x^*$ and
\[
  |x-y| \; |y-x^*|=r^2-|y|^2.
\]
The map $S$ is the composition of an inversion centered at $y$ of radius
$\sqrt{r^2-|y|^2}$ and the central symmetry centered at $y$ (recall that $y$
separates $x$ and $x^*$). Moreover, by Lemma \ref{le:interchords}, see also
Figure \ref{fi:geom}, we have $|x|=r$ if and only if $|x^*|=r$, namely thea
centered sphere of radius $r$ is globally invariant under $S$. The points $y$
and $\infty$ are mapped to each other by $S$.

Let $T$ be the inversion centered at the origin and with radius $r$. By Lemma
\ref{le:inversion}, the modulus of the cross-ratio of the coplanar points
$x,T(y),y,T(x)$ satisfies
\[
  |[x,T(y);y,T(x)]|
  =\frac{\left|x-y\right|\left|T(x)-T(y)\right|}{\left|x-T(x)\right|\left|y-T(y)\right|}
  =\frac{|x-y|^2r^2}{\left|r^2-|x|^2\right|\left|r^2-|y|^2\right|}.
\]
Note that since $x,y,x^*$ are aligned, the points $x,y,x^*,T(x),T(y)$ are
coplanar.

\begin{lemma}[Commutation]\label{le:commute}
  $S$ and $T$ commute.
\end{lemma}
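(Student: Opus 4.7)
The plan is to reduce the commutation of $S$ and $T$ to a two-dimensional computation and then verify it by a short algebraic identity in complex coordinates. First I would observe that every point involved in checking $S\circ T=T\circ S$ at a given $x$—namely $x$, $y$, $0$, $T(x)$, $S(x)$, $T(S(x))$, and $S(T(x))$—lies in the affine $2$-plane $\Pi$ spanned by $\{0,x,y\}$ (or, when these three points are collinear, the argument degenerates to dimension one). Indeed $T$ moves points along rays through $0\in\Pi$ and $S$ moves them along rays through $y\in\Pi$, and the spheres defining $T$ and the inversion part of $S$ meet $\Pi$ in circles contained in $\Pi$; hence both maps preserve $\Pi$, and it suffices to check the identity on $\Pi$.

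Next I would identify $\Pi$ with $\mathbb{C}$, placing $0$ at the origin and writing $y$ as a complex number of modulus $|y|<r$. Using Lemma \ref{le:inversion}, the inversion $T$ becomes the conjugated Möbius map $T(z)=r^2/\bar z$. Similarly, the inversion of center $y$ and radius $\sqrt{r^2-|y|^2}$ is $z\mapsto y+(r^2-|y|^2)/\overline{z-y}$, and composing with the central symmetry $z\mapsto 2y-z$ (which flips direction so that $y$ separates $z$ and the image, as demanded in the definition of $S$) gives
\[
  S(z)=y-\frac{r^2-|y|^2}{\overline{z-y}}.
\]

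The final step is a direct calculation. Applying the two formulas in each order, and using $\overline{r^2/\bar z}=r^2/z$ together with the fact that $r^2-|y|^2$ is real, I expect to obtain
\[
  S(T(z))=y-\frac{(r^2-|y|^2)\,z}{r^2-\bar y\,z}
  \qquad\text{and}\qquad
  T(S(z))=\frac{r^2(y-z)}{r^2-\bar y\,z},
\]
and the two expressions agree by the one-line identity $y(r^2-\bar y\,z)-(r^2-|y|^2)z=r^2(y-z)$. There is no serious obstacle here: the 2D reduction is the only conceptual step, and once it is in place the commutation is an elementary verification about two anti-holomorphic involutions that both preserve the circle $\{|z|=r\}$.
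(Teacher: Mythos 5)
Your proposal is correct and follows essentially the same route as the paper: identify the relevant plane with $\mathbb{C}$, write $T(z)=r^2/\bar z$ and $S(z)=y-(r^2-|y|^2)/\overline{z-y}$ as anti-holomorphic Möbius maps, and check $S\circ T=T\circ S$ by a one-line algebraic simplification. Your preliminary paragraph explaining why both maps preserve the plane spanned by $\{0,x,y\}$ makes explicit a reduction the paper leaves implicit (the paper jumps straight to "using complex coordinates"), which is a small but welcome clarification.
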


This is related to the fact that $S$ leaves globally invariant the fixed
points (circle) of $T$.

\begin{proof}
  Using complex coordinates $T(z)=r^2/\overline{z}$ while
  $T(z)-z_0=-(r^2-|z_0|^2)/(\overline{z-z_0})                                             $, where $z_0$ stands for $y$. Now
  we have
  \[
    T(S(z))
    =\frac{r^2}{\overline{z_0}-\frac{r^2-|z_0|^2}{z-z_0}}
    =\frac{r^2(z-z_0)}{\overline{z_0}z-r^2}
  \quad\text{and}\quad
    S(T(z))
    =z_0-\frac{r^2-|z_0|^2}{\overline{\frac{r^2}{\overline{z}}-z_0}}
    =\frac{r^2(z_0-z)}{r^2-\overline{z_0}z}.
  \]
\end{proof}

Since $S$ is the composition of an inversion and a central symmetry, it is a
special case of a conjugate Möbius transform, and then, by Lemma
\ref{le:crossratio},
\(
  |[x,T(y);y,T(x)]|
  =
  |[S(x),S(T(y));S(y),S(T(x))]|.
\)
Since $S$ and $T$ commute (Lemma \ref{le:commute}), we have, using
Lemma \ref{le:inversion} for the final step,
\begin{align*}
  |[x,T(y);y,T(x)]|
  &=|[S(x),T(S(y));S(y),T(S(x))]|%\\
  =|[x^*,T(\infty);\infty,T(x^*)]|%\\
  =|[x^*,0;\infty,T(x^*)]|\\
  &=\frac{|T(x^*)|}{|T(x^*)-x^*|}%\\
  =\frac{|T(x^*)||x^*|}{|r^2-|x^*|^2|}
  =\frac{r^2}{|r^2-|x^*|^2|}.
\end{align*}
It follows that in the case $|x|<r$ (in other words $|x^*|>r$) we get
(recall that $|y|<r$)
\[
  \frac{|x-y|^2}{(r^2-|x|^2)(r^2-|y|^2)}
  =
  \frac{1}{|x^*|^2-r^2}
  \quad\text{hence}\quad
  \frac{1}{(r^2-|x|^2)^{\frac{\alpha}{2}}|x-y|^{-\alpha}}
  =
  \frac{(r^2-|y|^2)^{\frac{\alpha}{2}}}{(|x^*|^2-r^2)^{\frac{\alpha}{2}}}.
\]
Finally, using this formula, we get, for all $y\in\mathbb{R}^d$, $|y|\leq r$,
and all $\alpha\geq0$, $d\geq2$,
\[
  I(y)
  :=\int_{|x|\leq r}
  \frac{\mathrm{d} x}
  {(r^2-|x|^2)^{\frac{\alpha}{2}}|x-y|^{d-\alpha}}
  =(r^2-|y|^2)^{\frac{\alpha}{2}}\int_{|x^*|\geq r}
  \frac{\mathrm{d} x^*}
  {(|x^*|^2-r^2)^{\frac{\alpha}{2}}|x^*-y|^d},
\]
where the differential identity
$\frac{\mathrm{d} x}{|x-y|^d}=\frac{\mathrm{d} x^*}{|x^*-y|^d}$ comes from Lemma
\ref{le:inversion} applied to $S$ which is not an inversion but which is the
composition of an inversion with an isometry (central symmetry).

Using spherical coordinates with $\rho=|x_*|$ and the Funk\,--\,Hecke formula
\eqref{eq:funkhecke} we get
\begin{align}
  I(y)
  &=(r^2-|y|^2)^{\frac{\alpha}{2}}\int_{|x^*|\geq r}
  \frac{\mathrm{d} x^*}
  {(|x^*|^2-r^2)^{\frac{\alpha}{2}}(|x^*|^2-2x^*\cdot y+|y|^2)^{\frac{d}{2}}}\nonumber\\
  &=|S_1|
    \frac{\Gamma(\frac{d}{2})}{\sqrt{\pi}\Gamma(\frac{d-1}{2})}
    (r^2-|y|^2)^{\frac{\alpha}{2}}\int_r^\infty\int_0^\pi
    \frac{\rho^{d-1}\sin^{d-2}(\theta)\mathrm{d} \rho\mathrm{d} \theta}
    {(\rho^2-r^2)^{\frac{\alpha}{2}}(\rho^2-2\rho|y|\cos(\theta)+|y|^2)^{\frac{d}{2}}}\nonumber\\
  &=|S_1|\frac{\Gamma(\frac{d}{2})}{\sqrt{\pi}\Gamma(\frac{d-1}{2})}
    (r_1^2-1)^{\frac{\alpha}{2}}
    \int_{r_1}^\infty\frac{\rho_1^{d-1}}{(\rho_1^2-r_1^2)^{\frac{\alpha}{2}}}
    \Bigr(\int_0^\pi\frac{\sin^{d-2}(\theta)\mathrm{d} \theta}
    {(\rho_1^2-2\rho_1\cos(\theta)+1)^{\frac{d}{2}}}\Bigr)\mathrm{d} \rho_1
    \label{eq:I(y)}
\end{align}
where $r:=r_1|y|$ and $\rho:=\rho_1|y|$. Note
that $r_1\geq1$ and $\rho_1\geq1$.

\subsection{Trigonometric change of variable}\label{se:trigo}
\label{appC.5}

Let us show that for $d>1$ and $\rho_1>1$,
\begin{equation}\label{eq:id}
  i_d:=\int_0^\pi
  \frac{\sin ^{d-2}(\theta )}{\left(\rho_1 ^2-2 \rho_1  \cos (\theta )+1\right)^{\frac{d}{2}}}
  \mathrm{d} \theta = \frac{\rho_1 ^{2-d}}{\rho_1 ^2-1}
  \int_0^\pi\sin ^{d-2}(\alpha)\mathrm{d} \alpha
  =\frac{\rho_1 ^{2-d}}{\rho_1 ^2-1}\sqrt{\pi }
  \frac{\Gamma \left(\frac{d-1}{2}\right)}{\Gamma \left(\frac{d}{2}\right)}.
\end{equation}
We first give a historical geometric argument. We then give in Remark
\ref{rm:vignat} an analytic argument using properties of the Gegenbauer
polynomials. The second equality in \eqref{eq:id} follows from the fact that
the middle integral becomes an Euler beta integral after the change of
variable $u=\sin(\alpha)$. To prove the first equality in \eqref{eq:id}, we
follow \cite[p.~400]{MR0350027}, and we use the change of variable
\[
  \frac{\sin(\theta)}{\sqrt{\rho_1^2-2\rho_1\cos(\theta)+1}}
  =\frac{\sin(\alpha)}{\rho_1},
\]
see Figure \ref{fi:geomid} for a geometric interpretation\footnote{It is
  mentioned in \cite[p.~400]{MR0350027} that this change of variable was
  suggested S.I.~Greenberg. Nevertheless such geometric reasoning goes back at
  least to the works on elliptic integrals of the 19-th century, see
  \cite{MR0099454}.}.
Following this figure, we have the identity
\[
  \rho_1^2-2\rho_1\cos(\theta)+1
  =A(\alpha)^2
  \quad\text{where}\quad
  A(\alpha)=\sqrt{\rho_1^2-\sin^2(\alpha)}+\cos(\alpha),
\]
hence
\(
  2\rho_1\sin(\theta)\mathrm{d} \theta
  =2A(\alpha)A'(\alpha)\mathrm{d} \alpha
\)
and by using the formula for the change of variable this gives
\[
  \mathrm{d} \theta
  =\frac{A'(\alpha)}{\sin(\alpha)}\mathrm{d} \alpha
  =\frac{-\sin(\alpha)-\frac{\sin(\alpha)\cos(\alpha)}{\sqrt{\rho_1^2-\sin^2(\alpha)}}}{\sin(\alpha)}\mathrm{d} \alpha
  =-\left(\frac{\sqrt{\rho_1^2-\sin^2(\alpha)}+\cos(\alpha)}{\sqrt{\rho_1^2-\sin^2(\alpha)}}\right)
    \mathrm{d} \alpha.
\]
Therefore,
% CHECK: there is a typo in the formula in \cite[p.~400]{MR0350027}: $1-$ should be $-1-$
we obtain, noting that $\theta = 0 \Longleftrightarrow \alpha = \pi$ and
$\theta = \pi \Longleftrightarrow \alpha = 0$ (see Figure~\ref{fi:geomid}),
\begin{align*}
  i_d
  & =  \int_0^\pi
  \Bigr(\frac{\sin(\alpha)}{\rho_1}\Bigr)^{d-2}
  \frac{1}{\left(\cos(\alpha)+\sqrt{\rho_1^2-\sin^2(\alpha)}\right)^2}
  \frac{\sqrt{\rho_1^2-\sin^2(\alpha)}+\cos(\alpha)}{\sqrt{\rho_1^2-\sin^2(\alpha)}}
    \mathrm{d} \alpha\\
  &=  \int_0^\pi
  \Bigr(\frac{\sin(\alpha)}{\rho_1}\Bigr)^{d-2}
    \frac{1}{\cos(\alpha)+\sqrt{\rho_1^2-\sin^2(\alpha)}}
    \frac{1}{\sqrt{\rho_1^2-\sin^2(\alpha)}}
    \mathrm{d} \alpha\\
  & =  \int_0^\pi
  \Bigr(\frac{\sin(\alpha)}{\rho_1}\Bigr)^{d-2}
    \frac{\cos(\alpha)-\sqrt{\rho_1^2-\sin^2(\alpha)}}{\cos^2(\alpha)-(\rho_1^2-\sin^2(\alpha))}
    \frac{1}{\sqrt{\rho_1^2-\sin^2(\alpha)}}
    \mathrm{d} \alpha\\
  & =  \frac{1}{\rho_1^{d-2}(1-\rho_1^2)}
    \int_0^\pi(\sin(\alpha))^{d-2}
    \left(\frac{\cos(\alpha)}{\sqrt{\rho_1^2-\sin^2(\alpha)}}-1\right)
    \mathrm{d} \alpha\\
  &=\frac{1}{\rho_1^{d-2}(\rho_1^2-1)}
    \int_0^\pi(\sin(\alpha))^{d-2}
   \mathrm{d} \alpha,
\end{align*}
where the last equality follows from the antisymmetry of $\cos$ around $\pi/2$.
This proves \eqref{eq:id}.

\begin{remark}[Proof of \eqref{eq:id} using Gegenbauer polynomials]\label{rm:vignat}
  Let $\rho = \frac{1}{\rho_1} < 1$. Using the generating function for
  Gegenbauer polynomials
  $(1 - 2 \rho \cos \theta + \rho^2)^{-\frac{d}{2}} = \sum_{n = 0}^\infty
  C_n^{(\frac{d}{2})}(\cos \theta) \; \rho^n$ gives
  \begin{equation}\label{E:Ggenfun}
    \int_0^\pi \frac{\sin^{d-2}\theta}{(1 - 2 \rho \cos \theta + \rho^2)^\frac{d}{2}} \; \mathrm{d} \theta
    = \sum_{n = 0}^\infty \rho^n
    \int_0^\pi \sin^{d-2}\theta \;C_n^{(\frac{d}{2})}(\cos \theta) \, \mathrm{d} \theta.
  \end{equation}
  The integral on the right-hand side vanishes for odd degree $n$ since the
  Gegenbauer polynomials are odd functions of $\cos \theta$.
  For even degree $n = 2k$, the integral can be computed as
  \begin{equation}\label{E:Gint}
    \int_0^\pi \sin^{d-2}\theta \;C_{2k}^{(\frac{d}{2})}(\cos \theta) \, \mathrm{d} \theta =
    \int_0^\pi \sin^{d-2} \theta \; \mathrm{d} \theta =
    % \begin{cases} 0 & \mbox{ $n$ odd}\\
    \sqrt{\pi} \frac{\Gamma\left(\frac{d-1}{2}\right)}{\Gamma\left(\frac{d}{2}\right)}.
    % \mbox{ $n$ even},
    % \end{cases}
  \end{equation}
  To establish (\ref{E:Gint}),
  use the recurrence relation~\cite[(18.9.7)]{NIST:DLMF}
  \[
    C_{2k}^{(\frac{d}{2})}(x) = C_{2k-2}^{(\frac{d}{2})}(x) +
    \frac{2k + \frac{d}{2} -1}{\frac{d}{2}-1} C_{2k}^{(\frac{d}{2}-1)}(x)
  \]
  and integrate against $\sin^{d-2}\theta$ to produce
  \[
    \int_0^\pi \sin^{d-2}\theta \;C_{2k}^{(\frac{d}{2})}(\cos \theta) \, \mathrm{d} \theta =
    \int_0^\pi \sin^{d-2}\theta \;C_{2k-2}^{(\frac{d}{2})}(\cos \theta) \, \mathrm{d} \theta +
    \frac{2k + \frac{d}{2} -1}{\frac{d}{2}-1} \int_0^\pi \sin^{d-2}\theta \; C_{2k}^{(\frac{d}{2}-1)}(\cos \theta).
  \]
  The second integral on the right-hand side vanishes by orthogonality, so
  \[
    \int_0^\pi \sin^{d-2}\theta \;C_{2k}^{(\frac{d}{2})}(\cos \theta) \, \mathrm{d} \theta =
    \int_0^\pi \sin^{d-2}\theta \;C_{2k-2}^{(\frac{d}{2})}(\cos \theta) \, \mathrm{d} \theta,
  \]
  with repeated application giving the first equality in (\ref{E:Gint}).
  Using (\ref{E:Gint}) in (\ref{E:Ggenfun}) then gives
  \[
    \int_0^\pi \frac{\sin^{d-2}\theta}{(1 - 2 \rho \cos \theta + \rho^2)^\frac{d}{2}} \; \mathrm{d} \theta =
    \sqrt{\pi} \frac{\Gamma\left(\frac{d-1}{2}\right)}{\Gamma\left(\frac{d}{2}\right)}
    \sum_{k=0}^\infty \rho^{2k} =
    \sqrt{\pi} \frac{\Gamma\left(\frac{d-1}{2}\right)}{\Gamma\left(\frac{d}{2}\right)}
    \frac{1}{1 - \rho^2}.
  \]
  The substitution $\rho = \frac{1}{\rho_1}$ then gives \eqref{eq:id}.
\end{remark}

\begin{figure}[htbp]
  \centering
  \begin{tikzpicture}[scale=.5]
    % coordinate using the quotes tikz library
    \coordinate (A) at (5,0);
    \coordinate (B) at (1.5,3.71);
    \coordinate (C) at (0,0);
    \coordinate (P) at (2.55,2.55);
    \coordinate (Q) at (1.5,0);
    \draw(C) circle (4cm);
    \draw (-4,0)-- (6,0);
    \fill [color=black] (C) circle (2pt);
    \draw[color=black,below left] (C) node {C};
    \draw (0,0)-- (B);
    \fill [color=black] (B) circle (2pt);
    \draw[color=black,above right] (B) node {B};
    \fill [color=black] (A) circle (2pt);
    \draw[color=black,below right] (A) node {A};
    \draw [very thick] (B)-- (A);
    \draw [very thick] (0,0)-- (A);
    \draw [thin](0,0)-- (P);
    \fill [color=black] (P) circle (2pt);
    \draw[color=black,above right] (P) node {P};
    \draw [thin](B)--(Q);
    \fill [color=black] (Q) circle (2pt);
    \draw[below right] (Q) node {Q};
    % angles using the angles tikz library
    \pic [draw,"$\theta$",
    angle eccentricity = 1.3, angle radius = .5 cm,
    thick, color = blue] {angle = A--C--B};
    \pic [draw,"$\alpha$",
    angle eccentricity = 1.3, angle radius = .5 cm,
    thick, color=blue] {angle = C--B--A};
    \pic [draw,angle eccentricity = 1, angle radius = .25 cm,
    thick, color = red]{right angle = A--Q--B};
    \pic [draw,angle eccentricity = 1, angle radius = .25 cm,
    thick, color = red] {right angle = C--P--A};
  \end{tikzpicture}
  \caption{\label{fi:geomid} Geometric interpretation of the $\theta$ to
    $\alpha$ change of variables for $i_d$. The angles and distances are
    $ACB=\theta$, $CBA=\alpha$, $CB = 1$ and $CA=\rho_1$. The right-angled
  triangle $ABQ$ has hypotenuse $AB$, thus
    \begin{equation*}
      AB^2 = BQ^2+AQ^2
      =\sin^2(\theta)+(AC-QC)^2
      =\sin^2(\theta)+(\rho_1-\cos(\theta))^2
      =\rho_1^2-2\rho_1\cos(\theta)+1.
      \end{equation*}
    The sine rule then gives
    \[
      \frac{\sin(\alpha)}{\rho_1} = \frac{\sin(\theta)}{\sqrt{\rho_1^2-2\rho_1\cos(\theta)+1}}.
    \]
    On the other hand, we also have
    \[
      \sqrt{\rho_1^2-2\rho_1\cos(\theta)+1}
      =
      AB
      =AP+PB
      =\sqrt{\rho_1^2-\sin^2(\alpha)}+\cos(\alpha).
    \]}
\end{figure}

\subsection{Conclusion}
\label{appC.6}

By combining \eqref{eq:I(y)} and \eqref{eq:id}, using
the successive changes of variables $t=\rho_1^2-r_1^2$, $t_1=t/(r_1^2-1)$, and
$u=1/(1+t_1)$, and the Euler reflection formula \eqref{eq:reflection}, we get
\begin{align*}
  I(y)
  &=|S_1|(r_1^2-1)^{\frac{\alpha}{2}}
    \int_{r_1}^\infty\frac{\rho_1\mathrm{d} \rho_1}{(\rho_1^2-r_1^2)^{\frac{\alpha}{2}}(\rho_1
    ^2-1)}\\
  &=|S_1|\frac{(r_1^2-1)^{\frac{\alpha}{2}}}{2}
    \int_0^\infty\frac{\mathrm{d} t}{t^{\frac{\alpha}{2}}(t+r_1 ^2-1)}\\
  &=\frac{|S_1|}{2}
    \int_0^\infty\frac{\mathrm{d} t_1}{t_1^{\frac{\alpha}{2}}(t_1+1)}\\
  &=\frac{|S_1|}{2}
    \int_0^1\frac{u^{\frac{\alpha}{2}-1}\mathrm{d} u}{(1-u)^{\frac{\alpha}{2}}}
    %=\frac{|S_1|}{2}\mathrm{Beta}\Bigr(\frac{\alpha}{2},1-\frac{\alpha}{2}\Bigr)
  =\frac{|S_1|\Gamma(\frac{\alpha}{2})\Gamma(1-\frac{\alpha}{2})}{2}
  =\frac{\pi^{\frac{d}{2}+1}}{\Gamma(\frac{d}{2})\sin(\frac{\pi}{2}\alpha)}.
\end{align*}
This completes the proof of \eqref{eq:riesz} and thus of Lemma \ref{le:riesz}.

\section{Alternative analytic proof of Riesz formula}
\label{ap:se:analytic:proof:riesz}

\subsection{The Mellin transform and Riesz potentials}

%Quoting \cite[Ch.~12]{MR1867914}, \emph{we recall that the Fourier transform %pair may be written in the form
%\begin{equation}
%    A(\theta):=\int_{\mathbb{R}}a(t)\mathrm{e}^{\mathrm{i}\theta t}\mathrm{d}t,\quad\alpha<\Im\theta<\beta,
%    \quad\text{and}\quad
%    a(t)=\frac{1}{2\pi}\int_{\mathrm{i}c+\mathbb{R}}A(\theta)\mathrm{e}^{-\mathrm{i}\theta t}\mathrm{d}\theta,\quad\alpha<c<\beta.
%\end{equation}
%The Mellin transform and its inverse follows if we introduce the variable change 
%\begin{equation}\label{eq:mellin}
%    z=\mathrm{i}\theta,\quad x=\mathrm{e}^t,\quad f(x)=a(\log(x)),
%\end{equation}
%so that we obtain the reciprocal pair of integral transforms, for %$f:(0,+\infty)\to\mathbb{R}$,
%\begin{equation}\label{eq:mellinv}
%    F(z):=\int_0^\infty f(x)x^{z-1}\mathrm{d}x,\quad\alpha<\Re z<\beta,
%    \quad\text{and}\quad
%    f(x)=\frac{1}{2\pi\mathrm{i}}\int_{c+\mathrm{i}\mathbb{R}}F(z)x^{-z}\mathrm{d}z,\quad\alpha<c<\beta.
%\end{equation}
%Equation \eqref{eq:mellin} is the Mellin transform, and \eqref{eq:mellinv} %is the Mellin inversion formula. The integral defining the transform %normally exists only in the strip $\alpha<\Re(z)<\beta$; therefore the %inversion contour must be placed in this strip.} For convenience we also %denote by $\mathcal{M}f=F$ the Mellin transform of $f$.

Quoting \cite[Ch.~12]{MR1867914}, we recall that the Fourier transform pair may be written in the form
\begin{equation*}
    A(\theta):=\int_{\mathbb{R}}a(t)\mathrm{e}^{\mathrm{i}\theta t}\mathrm{d}t,\quad\alpha<\Im\theta<\beta,
    \quad\text{and}\quad
    a(t)=\frac{1}{2\pi}\int_{\mathrm{i}c+\mathbb{R}}A(\theta)\mathrm{e}^{-\mathrm{i}\theta t}\mathrm{d}\theta,\quad\alpha<c<\beta.
\end{equation*}
The Mellin transform and its inverse follow if we introduce the variable changes
\begin{equation*}
    z=\mathrm{i}\theta,\quad x=\mathrm{e}^t,\quad f(x)=a(\log(x)),
\end{equation*}
so that we obtain the reciprocal pair of integral transforms, for $f:(0,+\infty)\to\mathbb{R}$,
\begin{equation}\label{eq:mellin}
    F(z):=\int_0^\infty f(x)x^{z-1}\mathrm{d}x,\quad\alpha<\Re z<\beta,
    \quad\text{and}\quad
    f(x)=\frac{1}{2\pi\mathrm{i}}\int_{c+\mathrm{i}\mathbb{R}}F(z)x^{-z}\mathrm{d}z,\quad\alpha<c<\beta.
\end{equation}
Display \eqref{eq:mellin} exhibits the Mellin transform followed by its inversion formula. The integral defining the transform normally exists only in the strip $\alpha<\Re z<\beta$; therefore the inversion contour must be placed in this strip. For convenience we also denote by $\mathcal{M}f=F$ the Mellin transform of $f$.

The Mellin transform of $x\mapsto\mathrm{e}^{-x}$ is the Euler Gamma  function $\Gamma$. Its poles are $0,-1,-2,-3,\ldots$. The Mellin
transform of $x\mapsto(1-x)_+^{b-1}$ at a point $z$ is\footnote{Recall the
  definition of the Euler Beta function 
  $\mathrm{Beta}(a,b):=\int_0^1t^{a-1}(1-t)^{b-1}\mathrm{d}t=\frac{\Gamma(a)\Gamma(b)}{\Gamma(a+b)}$.}
$\int_0^1x^{z-1}(1-x)^{b-1}\mathrm{d}x=\mathrm{Beta}(z,b)$.

%Riesz potential or inverse fractional operator, for $0<\alpha<d$,
%\begin{equation}
%  (-\Delta)^{-\frac{\alpha}{2}}f(x)
%  :=
%  \int_{\mathbb{R}^d}\frac{f(x-y)}{|y|^{d-\alpha}}\mathrm{d}y.
%\end{equation}

\begin{lemma}[Riesz potential of radial functions]\label{le:dkk}
  Suppose that
  \[
    x\in\mathbb{R}^d\mapsto f(x)=\varphi(|x|^2),
  \]
  where $\varphi:\mathbb{C}\to\mathbb{R}$ is given as the absolutely convergent inverse Mellin
  transform
  \[
    \varphi(r):=\frac{1}{2\pi\mathrm{i}}
    \int_{\lambda+\mathrm{i}\mathbb{R}}
    \mathcal{M}\varphi(z)r^{-z}\mathrm{d}z,
    \quad
    \text{for some }\lambda\in\mathbb{R}.
  \]
  If $0<\alpha<2\lambda<d$, the Riesz potential
  $(\left|\cdot\right|^{-(d-\alpha)}*f)(x)$ is well defined for $x\neq0$, and
  \[
    (\left|\cdot\right|^{-(d-\alpha)}*f)(x)
    =\psi(|x|^2),
  \]
  where
  \[
    \psi(r):=
    \frac{1}{2\pi\mathrm{i}}
    \frac{\pi^{\frac{d}{2}}\Gamma(\frac{\alpha}{2})}{\Gamma(\frac{d-\alpha}{2})}
    \int_{\lambda-\frac{\alpha}{2}+\mathrm{i}\mathbb{R}}
    \frac{\Gamma(z)\Gamma(\frac{d-\alpha}{2}-z)}
    {\Gamma(\frac{\alpha}{2}+z)\Gamma(\frac{d}{2}-z)}
    \mathcal{M}\varphi(z+\tfrac{\alpha}{2})
    r^{-z}\mathrm{d}z.
  \]
  In other words, the Mellin transform of $\psi$ satisfies
  \[
    \mathcal{M}\psi(z)
    =
    \frac{\pi^{\frac{d}{2}}\Gamma(\frac{\alpha}{2})}
    {\Gamma(\frac{d-\alpha}{2})}
    \frac{\Gamma(z)\Gamma(\frac{d-\alpha}{2}-z)}
    {\Gamma(\frac{\alpha}{2}+z)\Gamma(\frac{d}{2}-z)}
    \mathcal{M}\varphi(z+\tfrac{\alpha}{2}).
  \]
\end{lemma}

\begin{proof}
  This is \cite[Prop.~2 with $V\equiv1$ (and $l=0$)]{MR3640641}, see also
  \cite[eq.~(7)]{MR3640641}. The idea is to use the inverse Mellin
  transform of $\varphi$ to reduce the problem, via the Fubini theorem, to the
  computation of the Riesz potential of inverse powers of the norm, which is
  immediate from the semigroup property of the Riesz kernel. Namely, following
  \cite[eq.~(1.1.12)]{MR0350027} or \cite[eq.~(8)~p.~118]{MR0290095}, on $\mathbb{R}^d$, the semigroup
  property for Riesz kernels reads, for all $\alpha,\beta\in\mathbb{C}$ such
  that $\Re\alpha,\Re\beta>0$ and $\Re\alpha+\Re\beta<d$,
  \begin{equation}\label{eq:semigroup}
    \left|\cdot\right|^{-(d-\alpha)}*\left|\cdot\right|^{-(d-\beta)}
    =\frac{c_d(\alpha)c_d(\beta)}{c_d(\alpha+\beta)}
    \left|\cdot\right|^{-(d-(\alpha+\beta))}
    \quad\text{where}\quad
    c_d(z)
    :=\frac{2^z\pi^{\frac{d}{2}}\Gamma(\frac{z}{2})}{\Gamma(\frac{d-z}{2})}.
  \end{equation}
  Now, by the inverse Mellin transform of $\varphi$, the Fubini theorem, and the semigroup property,
  \begin{align*}
    (\left|\cdot\right|^{-(d-\alpha)}*f)(x)
    &=\frac{1}{2\pi\mathrm{i}}
    \int_{\lambda+\mathrm{i}\mathbb{R}}
      (\left|\cdot\right|^{-(d-\alpha)}*\left|\cdot\right|^{-2z})\mathcal{M}\varphi(z)\mathrm{d}z\\
    &=\frac{1}{2\pi\mathrm{i}}
      \int_{\lambda+\mathrm{i}\mathbb{R}}
      \frac{c_d(\alpha)c_d(d-2z)}{c_d(d+\alpha-2z)}
      \mathcal{M}\varphi(z)\left|\cdot\right|^{-(2z-\alpha)}\mathrm{d}z\\
    &=\frac{1}{2\pi\mathrm{i}}
      \int_{\lambda-\frac{\alpha}{2}+\mathrm{i}\mathbb{R}}
      \frac{c_d(\alpha)c_d(d-\alpha-2w)}{c_d(d-2w)}
      \mathcal{M}\varphi(w+\tfrac{\alpha}{2})\left|\cdot\right|^{-2w}\mathrm{d}w.    
  \end{align*}
\end{proof}

\subsection{Analytic proof of Riesz integral formula}

The Riesz integral formula \eqref{eq:riesz} for $R=1$, $x\in B_1$, $x\neq0$, is a special case of \cite[Cor.~4 with $V\equiv1$, $l=0$, $\alpha=s-d$,  $\delta=d$, $\rho = \sigma = -\frac{d-s}{2}$, $0<s<d-2$ (implies $\sigma>-1$), see also eq.~(7)]{MR3640641}. Let us give the proof extracted from there. With $\varphi(r):=(1-r)_+^{\frac{s-d}{2}}$, we have $\mathcal{M}\varphi(z)=\int_0^1r^{z-1}(1-r)^{\frac{s-d}{2}}\mathrm{d}r=\mathrm{Beta}(z,1-\tfrac{d-s}{2})$, and by 
  Lemma \ref{le:dkk} with $\alpha=d-s$,
  \begin{align*}
    \mathcal{M}\psi(z)
    %&=\frac{\pi^{\frac{d}{2}}\Gamma(\frac{d-s}{2})}
    %  {\Gamma(\frac{s}{2})}
    %  \frac{\Gamma(z)\Gamma(\frac{s}{2}-z)}
    %  {\Gamma(\frac{d-s}{2}+z)\Gamma(\frac{d}{2}-z)}
    %  \mathcal{M}\varphi(z+\tfrac{d-s}{2})\\
    %&=\frac{\pi^{\frac{d}{2}}\Gamma(\frac{d-s}{2})}
    %  {\Gamma(\frac{s}{2})}
    %  \frac{\Gamma(z)\Gamma(\frac{s}{2}-z)}
    %  {\Gamma(\frac{d-s}{2}+z)\Gamma(\frac{d}{2}-z)}
    %  \mathrm{Beta}(z+\tfrac{d-s}{2},1-\tfrac{d-s}{2})\\
    &=\frac{\pi^{\frac{d}{2}}\Gamma(\frac{d-s}{2})\Gamma(1-\frac{d-s}{2})}
    {\Gamma(\frac{s}{2})}
      \frac{\Gamma(z)\Gamma(\frac{s}{2}-z)}{\Gamma(\frac{d}{2}-z)\Gamma(z+1)}.
  \end{align*}
  Now, if the vertical line $\lambda+\mathrm{i}\mathbb{R}$ separates the poles of $z\mapsto\Gamma(z)$ and the poles of $z\mapsto\Gamma(\frac{s}{2}-z)$, then\footnote{The $\Gamma$ function has no zeros, indeed a zero leads via $\Gamma(z)=(z-1)\Gamma(z-1)$ to infinitely many zeros to the left, and then via $\Gamma(z)\Gamma(1-z)=\frac{\pi}{\sin(\pi z)}$ to infinitely many poles to the right, which contradicts the analycity of $\Gamma$ on $\Re z>0$.

  The $\Gamma$ function is meromorphic on the complex plane; its poles are the non-positive integers, and are simple. Moreover, $\mathrm{Residue}_{z=-n}(\Gamma(z)):=\lim_{z\to-n}(z-(-n))\Gamma(z)=\frac{(-1)^n}{n!}$, which follows from $(z+n)\Gamma(z)=\frac{\Gamma(z+n+1)}{z(z+1)\cdots(z+n-1)}$.
}
  \begin{align*}
  \frac{1}{2\pi\mathrm{i}}\int_{\lambda+\mathrm{i}\mathbb{R}}
        \frac{\Gamma(z)\Gamma(\frac{s}{2}-z)}{\Gamma(\frac{d}{2}-z)\Gamma(z+1)}
        x^{-z}\mathrm{d}z
        &=\sum_{k=0}^\infty\mathrm{Residue}_{z=-k}\Bigr(\frac{\Gamma(z)\Gamma(\frac{s}{2}-z)}{\Gamma(\frac{d}{2}-z)\Gamma(z+1)}x^{-z}\Bigr)\\
        &=\sum_{k=0}^\infty\frac{\Gamma(\frac{s}{2}+k)}{\Gamma(\frac{d}{2}+k)\Gamma(-k+1)}x^{k}\mathrm{Residue}_{z=-k}(\Gamma(z))\\
        &=\sum_{k=0}^\infty\frac{\Gamma(\frac{s}{2}+k)}{\Gamma(\frac{d}{2}+k)\Gamma(-k+1)}\frac{(-x)^{k}}{k!} =\frac{\Gamma(\frac{s}{2})}{\Gamma(\frac{d}{2})}.
  \end{align*}
 
\begin{remark}[Meijer G-functions] A key point in the proof above is the computation of the inverse Mellin transform of a certain ratio of products of Gamma functions (Mellin transfrom of $\psi$). This is actually the definition of Meijer G-functions. Following \cite{MR3640641}, if $f(x):=(1-|x|^2)_+^\sigma\ {}_2F_1(a,b;c;1-|x|^2)=\varphi(|x|^2)$ where $\varphi(r)=(1-r)_+^\sigma\ {}_2F_1(a,b;c;r)$, then it is possible, by using the same method as above, to express $\varphi$ as a Meijer G-function, and to deduce that the Riesz potential of $f$ on the unit ball is equal to another Meijer G-function, which reduces to a hypergeometric function in certain cases.
\end{remark}

% Bibliography

\bigskip

\textbf{Acknowledgments.} The authors are grateful to Doug Hardin for the
suggestion of using the spherical Laplacian, to Bent Fuglede and Wolfgang
Wendland for helpful comments on the convolution of distributions, to Franck
Wielonsky for historical references, and to Ekatrina Karatsuba for the
references to tables and related literature. The manuscript further benefited
from enriching comments by an anonymous reviewer (see Remark
\ref{rm:outside}), as well as from the analytic argument based on Gegenbauer
polynomials in Remark \ref{rm:vignat} which was contributed by Christophe
Vignat.

\bibliographystyle{abbrv}%
\bibliography{hdep}%

\begin{thebibliography}{10}

\bibitem{MR510197}
L.~V. Ahlfors.
\newblock {\em Complex analysis}.
\newblock International Series in Pure and Applied Mathematics. McGraw-Hill,
  third edition, 1978.
\newblock An introduction to the theory of analytic functions of one complex
  variable.

\bibitem{MR966232}
G.~Almkvist and B.~Berndt.
\newblock Gauss, {L}anden, {R}amanujan, the arithmetic-geometric mean,
  ellipses, {$\pi$}, and the {\it {l}adies diary}.
\newblock {\em Amer. Math. Monthly}, 95(7):585--608, 1988.

\bibitem{zbMATH01231230}
G.~E. {Andrews}, R.~{Askey}, and R.~{Roy}.
\newblock {\em {Special functions.}}, volume~71.
\newblock Cambridge: Cambridge University Press, 1999.

\bibitem{MR0185155}
W.~N. Bailey.
\newblock {\em Generalized hypergeometric series}.
\newblock Cambridge Tracts in Mathematics and Mathematical Physics, No. 32.
  Stechert-Hafner, Inc., New York, 1964.

\bibitem{MR3970999}
S.~V. Borodachov, D.~P. Hardin, and E.~B. Saff.
\newblock {\em Discrete energy on rectifiable sets}.
\newblock Springer Monographs in Mathematics. Springer, New York, 2019.

\bibitem{MR2460394}
Y.~A. Brychkov.
\newblock {\em Handbook of special functions}.
\newblock CRC Press, Boca Raton, FL, 2008.
\newblock Derivatives, integrals, series and other formulas.

\bibitem{MR0277773}
P.~F. Byrd and M.~D. Friedman.
\newblock {\em Handbook of elliptic integrals for engineers and scientists}.
\newblock Die Grundlehren der mathematischen Wissenschaften, Band 67. Springer,
  1971.
\newblock Second edition, revised.

\bibitem{zbMATH06665921}
J.~A. {Carrillo} and Y.~{Huang}.
\newblock {Explicit equilibrium solutions for the aggregation equation with
  power-law potentials}.
\newblock {\em {Kinet. Relat. Models}}, 10(1):171--192, 2017.

\bibitem{MR3262506}
D.~Chafa\"\i, N.~Gozlan, and P.-A. Zitt.
\newblock First-order global asymptotics for confined particles with singular
  pair repulsion.
\newblock {\em Ann. Appl. Probab.}, 24(6):2371--2413, 2014.

\bibitem{hdep}
D.~Chafaï, E.~B. Saff, and R.~S. Womersley.
\newblock Threshold condensation to singular support for a {Riesz} equilibrium
  problem.
\newblock preprint, 2022.

\bibitem{MR1867914}
B.~Davies.
\newblock {\em Integral transforms and their applications}, volume~41 of {\em
  Texts in Applied Mathematics}.
\newblock Springer-Verlag, New York, third edition, 2002.

\bibitem{NIST:DLMF}
{\it NIST Digital Library of Mathematical Functions}.
\newblock \url{http://dlmf.nist.gov/}, Release 1.1.2 of 2021-06-15.
\newblock F.~W.~J. Olver, A.~B. {Olde Daalhuis}, D.~W. Lozier, B.~I. Schneider,
  R.~F. Boisvert, C.~W. Clark, B.~R. Miller, B.~V. Saunders, H.~S. Cohl, and
  M.~A. McClain, eds.

\bibitem{MR3640641}
B.~Dyda, A.~Kuznetsov, and M.~Kwa\'{s}nicki.
\newblock Fractional {L}aplace operator and {M}eijer {G}-function.
\newblock {\em Constr. Approx.}, 45(3):427--448, 2017.

\bibitem{MR0058756}
A.~Erd\'{e}lyi, W.~Magnus, F.~Oberhettinger, and F.~G. Tricomi.
\newblock {\em Higher transcendental functions. {V}ols. {I}, {II}}.
\newblock McGraw-Hill, 1953.
\newblock Based, in part, on notes left by Harry Bateman.

\bibitem{MR2281163}
G.~Gasper and M.~Schlosser.
\newblock Some curious {$q$}-series expansions and beta integral evaluations.
\newblock {\em Ramanujan J.}, 13(1-3):227--240, 2007.

\bibitem{GutlebCarrilloOlver2021Balls}
T.~S. Gutleb, J.~A. Carrillo, and S.~Olver.
\newblock Computation of power law equilibrium measures on balls of arbitrary
  dimension, 2021.
\newblock preprint
  \href{https://arxiv.org/abs/2109.00843v1}{arXiv:2109.00843v1}.

\bibitem{MR0099454}
H.~Hancock.
\newblock {\em Elliptic integrals}.
\newblock Dover Publications, Inc., New York, 1958.

\bibitem{MR1790156}
S.~Helgason.
\newblock {\em Groups and geometric analysis}, volume~83 of {\em Mathematical
  Surveys and Monographs}.
\newblock American Mathematical Society, Providence, RI, 2000.
\newblock Integral geometry, invariant differential operators, and spherical
  functions, Corrected reprint of the 1984 original.

\bibitem{zbMATH06324458}
Y.~{Huang}.
\newblock {Explicit Barenblatt profiles for fractional porous medium
  equations}.
\newblock {\em {Bull. Lond. Math. Soc.}}, 46(4):857--869, 2014.

\bibitem{zbMATH02006424}
C.~{Krattenthaler} and K.~{Srinivasa Rao}.
\newblock {Automatic generation of hypergeometric identities by the beta
  integral method.}
\newblock {\em {J. Comput. Appl. Math.}}, 160(1-2):159--173, 2003.

\bibitem{doi:10.1098/rstl.1771.0037}
J.~Landen.
\newblock {XXXVI}. {A} disquisition concerning certain fluents, which are
  assignable by the arcs of the conic sections; wherein are investigated some
  new and useful theorems for computing such fluents.
\newblock {\em Philosophical Transactions of the Royal Society of London},
  61:298--309, 1771.

\bibitem{doi:10.1098/rstl.1775.0028}
J.~Landen.
\newblock {XXVI}. {A}n investigation of a general theorem for finding the
  length of any arc of any conic hyperbola, by means of two elliptic arcs with
  some other new and useful theorems deduced therefrom.
\newblock {\em Philosophical Transactions of the Royal Society of London},
  65:283--289, 1775.

\bibitem{MR0350027}
N.~S. Landkof.
\newblock {\em Foundations of modern potential theory}.
\newblock Springer, 1972.
\newblock Translated from the Russian by A. P. Doohovskoy, Die Grundlehren der
  mathematischen Wissenschaften 180.

\bibitem{MR1018814}
Y.~Mizuta.
\newblock Continuity properties of {R}iesz potentials and boundary limits of
  {B}eppo {L}evi functions.
\newblock {\em Math. Scand.}, 63(2):238--260, 1988.

\bibitem{MR1211771}
Y.~Mizuta.
\newblock Continuity properties of potentials and {B}eppo-{L}evi-{D}eny
  functions.
\newblock {\em Hiroshima Math. J.}, 23(1):79--153, 1993.

\bibitem{MR1428685}
Y.~Mizuta.
\newblock {\em Potential theory in {E}uclidean spaces}, volume~6 of {\em GAKUTO
  International Series. Mathematical Sciences and Applications}.
\newblock Gakkotosho Co., Ltd., Tokyo, 1996.

\bibitem{MR0199449}
C.~M{\"{u}}ller.
\newblock {\em Spherical harmonics}, volume~17 of {\em Lecture Notes in
  Mathematics}.
\newblock Springer, 1966.

\bibitem{MR1054647}
A.~P. Prudnikov, Y.~A. Brychkov, and O.~I. Marichev.
\newblock {\em Integrals and series. {V}ol. 3}.
\newblock Gordon and Breach, 1990.
\newblock More special functions, Translated from the Russian by G. G. Gould.

\bibitem{polya-szego}
G.~Pólya and G.~Szegő.
\newblock Über die transfiniten {D}urchmesser ({K}apazitätskonstante) von
  ebenen und räumlichen {P}unktmengen.
\newblock {\em Journal für die reine und angewandte {M}athematik},
  (165):4--49, 1931.

\bibitem{zbMATH03003508}
M.~{Riesz}.
\newblock {Sur certaines in\'egalit\'es dans la th\'eorie des fonctions avec
  quelques remarques sur les g\'eometries non-euclidiennes}.
\newblock {Kungl. Fysiogr. S\"allsk. Lund F\"orh. 1, 21.}, 1931.

\bibitem{zbMATH03029943}
M.~{Riesz}.
\newblock {Int\'egrales de Riemann-Liouville et potentiels}.
\newblock {\em {Acta Litt. Sci. Szeged}}, 9:1--42, 1938.

\bibitem{MR1485778}
E.~B. Saff and V.~Totik.
\newblock {\em Logarithmic potentials with external fields}, volume 316 of {\em
  Die Grundlehren der mathematischen Wissenschaften}.
\newblock Springer, 1997.
\newblock Appendix B by Thomas Bloom.

\bibitem{MR2090496}
M.~Schlosser.
\newblock Some curious extensions of the classical beta integral evaluation.
\newblock In {\em Mathematics and computer science. {III}}, Trends Math., pages
  59--68. Birkh\"{a}user, Basel, 2004.

\bibitem{MR0209834}
L.~Schwartz.
\newblock {\em Th\'{e}orie des distributions}.
\newblock Publications de l'Institut de Math\'{e}matique de l'Universit\'{e} de
  Strasbourg, IX-X. Hermann, Paris, 1966.
\newblock Nouvelle \'{e}dition, enti\'{e}rement corrig\'{e}e, refondue et
  augment\'{e}e.

\bibitem{MR0290095}
E.~M. Stein.
\newblock {\em Singular integrals and differentiability properties of
  functions}.
\newblock Princeton Mathematical Series, No. 30. Princeton University Press,
  Princeton, N.J., 1970.

\bibitem{MR2528466}
D.~W. Stroock.
\newblock Weyl's lemma, one of many.
\newblock In {\em Groups and analysis}, volume 354 of {\em London Math. Soc.
  Lecture Note Ser.}, pages 164--173. Cambridge Univ. Press, Cambridge, 2008.

\end{thebibliography}

\end{document}